\documentclass[11pt,reqno]{amsart}

\usepackage[T1]{fontenc}
\usepackage[utf8]{inputenc}
\usepackage{amsmath,amssymb,amsfonts,amsthm,mathtools}
\numberwithin{equation}{section}
\usepackage[a4paper,margin=2.5cm]{geometry}
\usepackage{microtype}
\usepackage{libertinus}
\usepackage{xcolor}
\usepackage{tikz}
\usetikzlibrary{positioning}
\usepackage{hyperref}

\hypersetup{colorlinks=true,linkcolor=black,citecolor=black,urlcolor=black}

\newtheorem{maintheorem}{Theorem}

\newtheorem{maintheoremb}{Theorem}

\newtheorem{maintheoremc}{Theorem}

\newtheorem{theorem}{Theorem}[section]
\newtheorem{lemma}[theorem]{Lemma}
\newtheorem{proposition}[theorem]{Proposition}
\newtheorem{corollary}[theorem]{Corollary}
\theoremstyle{definition}
\newtheorem{definition}[theorem]{Definition}
\newtheorem{example}[theorem]{Example}
\theoremstyle{remark}
\newtheorem{remark}[theorem]{Remark}

\newcommand{\T}{\mathbb T}
\newcommand{\N}{\mathbb N}
\newcommand{\E}{\mathbb E}
\newcommand{\Pp}{\mathbb P}

\newcommand{\proofstep}[2]{%
  \par\medskip
  \noindent\textbf{Step #1: #2.}%
}

\usepackage{tabularx}
\begin{document}

\title{Sharp Summability on Supports of Prescribed Combinatorial Dimension}

\author{Anderson Barbosa}
\address{Universidade Federal Rural do Semi-\'Arido (UFERSA), Rio Grande do Norte, Brazil}
\email{anderson.barbosa@ufersa.edu.br}

\author{Daniel Pellegrino}
\address{Departamento de Matem\'atica, Universidade Federal da Para\'iba, Jo\~ao Pessoa, PB, Brazil}
\email{dmpellegrino@gmail.com}

\author{Anselmo Raposo}
\address{Departamento de Matem\'atica, Universidade Federal do Maranh\~ao, S\~ao Lu\'is, MA, Brazil}
\email{anselmo.junior@ufma.br}

\author{Eduardo Teixeira}
\address{Department of Mathematics, Oklahoma State University, Stillwater, OK, USA}
\email{eduardo.teixeira@okstate.edu}

\date{}

\begin{abstract}
We solve four questions raised by Bayart concerning coefficient summability for multilinear forms with prescribed supports. For every $m\ge 2$ and $d\in[1,m]$, we determine the product-summability exponent and the multilinear summability invariant:
\[
 \mathrm{prod}(m,d)
 =\min\left\{\frac md,\,m-\lceil d\rceil+1\right\},
 \qquad
 \gamma_{\mathrm{mult}}(m,d)
 =\min\left\{m-\lceil d\rceil+1,\frac{2m}{d+1}\right\}.
\]
In particular, $\gamma_{\mathrm{mult}}(4,2)=8/3$, showing that the multilinear invariant need not be an integer. We also prove that, for every $d\in[1,m]$, there is a single infinite support of exact combinatorial dimension $d$ on which the dimensional Hardy--Littlewood bound is attained over both scalar fields for every anisotropic parameter $\mathbf p=(p_1,\ldots,p_m)$ with $\sum_j1/p_j<1$, simultaneously across the two regimes separated by $\sum_j1/p_j=1/2$.
\end{abstract}

\maketitle
\setcounter{tocdepth}{2}
\section{Introduction}\label{sec:introduction}

Classical coefficient inequalities measure a multilinear form through the
summability of its coefficient array. The framework introduced by Bayart in
\cite{BayartJEMS} asks a more refined question: what survives when the
summation is restricted to an arbitrary support
\[
 \Lambda\subset\mathbb{N}^m,
\]
and how does the geometry of that support enter the optimal exponent? The
relevant invariant is its combinatorial dimension, which records the largest
possible growth of $\Lambda$ inside finite Cartesian boxes.

Bayart's formulation separates the analytic size of a multilinear form from
the combinatorial concentration of its coefficients and asks how much
summability can be recovered from geometric information alone. The resulting questions are simple to state, but their
resolution requires one to distinguish features of a support that are visible
to its global dimension from finer concentrations along coordinate sections.

The underlying geometry originates in Blei's theory of fractional Cartesian
products
\cite{Blei1979,Blei1984,BleiMemoir,BleiBook}. Related constructions and
dimension formulas appear in
\cite{BleiKorner,BleiSchmerl,BleiGao,BleiPeresSchmerl}. This perspective
naturally brings together fractional-cover inequalities of
Bollob\'as--Thomason and Finner
\cite{BollobasThomason,Finner1992}, the join bounds of
Atserias--Grohe--Marx \cite{AGM}, and H\"older--Brascamp--Lieb inequalities
\cite{BrascampLieb1976,BCCT,CDKTY}. These tools play complementary roles
below. Algebraic H\"older--Brascamp--Lieb data produce exact model supports,
while sparse selection allows their geometry to be transferred to dimensions
not directly represented by the algebraic constructions.

We begin with the central geometric notion.

\begin{definition}[\cite{BayartJEMS}, p.~1162]\label{def:comb-dim}
For a nonempty set $\Lambda\subset\mathbb{N}^m$, define its counting function by
\[
 \Lambda(n):=
 \max_{\substack{A_1,\ldots,A_m\subset\mathbb{N}\\
 |A_j|\le n\ (1\le j\le m)}}
 |\Lambda\cap(A_1\times\cdots\times A_m)|.
\]
Thus $\Lambda(n)$ is the largest number of points of $\Lambda$ that can lie
in a Cartesian product whose factors contain at most $n$ elements.

The \emph{combinatorial dimension} of $\Lambda$ is
\[
 \dim(\Lambda):=
 \inf\bigl\{s>0:\Lambda(n)=O(n^s)\bigr\}.
\]
Equivalently,
\[
 \dim(\Lambda)
 =
 \limsup_{n\to\infty}\frac{\log\Lambda(n)}{\log n},
\]
where the quotient is considered for $n\ge2$. If $d=\dim(\Lambda)$, we say
that $\Lambda$ has \emph{exact combinatorial dimension $d$} when the critical
growth is attained, namely, when there exists $C>0$ such that
\[
 \Lambda(n)\le Cn^d
 \qquad(n\in\mathbb{N}).
\]
\end{definition}

{
Four questions from Bayart's theory guide the paper. Problem~5.2 asks
for the largest product-summability exponent compatible with a prescribed
combinatorial dimension. Problem~5.1 asks for the corresponding multilinear
summability invariant when cancellation by unimodular coefficients is allowed.
Question~5.7 asks whether this invariant must always be an integer. A fourth
question, raised in the discussion preceding \cite[Theorem~1.2]{BayartJEMS},
asks whether the dimensional Hardy--Littlewood estimate of
\cite[Theorem~1.1]{BayartJEMS} can be attained by a support of every
prescribed dimension.

These problems probe complementary aspects of the same geometry. Product
summability detects positive mass and coordinate concentration; the
multilinear invariant measures the additional room created by cancellation;
and the Hardy--Littlewood problem asks whether the universal dimensional bound
can be realized sharply on one support. We determine both extremal invariants
for every $d\in[1,m]$ and construct, at each prescribed dimension, one support
that is simultaneously sharp for every anisotropic parameter in the full
Hardy--Littlewood range.
}

\subsection{The product-summability problem}

Intermediate dimensions arise naturally from fractional Cartesian products,
developed below. The associated
extremal exponent is the quantity introduced by Bayart in Problem~5.2.

\begin{definition}[\cite{BayartJEMS}, Problem~5.2]
\label{def:product-summability}
Let $\Lambda\subset\mathbb{N}^m$ and $p\in[1,\infty)$. We say that $\Lambda$ is
\emph{$p$-product summable} if there exists $C>0$ such that
\begin{equation}\label{eq:product-summability-def}
 \sum_{(i_1,\ldots,i_m)\in\Lambda}
 \prod_{j=1}^m|x^{(j)}_{i_j}|
 \le C\prod_{j=1}^m\|x^{(j)}\|_{\ell_p}
 \qquad
 \bigl(x^{(1)},\ldots,x^{(m)}\in c_{00}\bigr).
\end{equation}
For $d\in[1,m]$, let
\[
 \mathcal C_d:=
 \{\Lambda\subset\mathbb{N}^m:
 |\Lambda|=\infty,\ \dim(\Lambda)=d\},
\]
and define
\[
 \mathrm{prod}(m,d)
 :=
 \sup\bigl\{
 p\ge1:
 \text{$\Lambda$ is $p$-product summable for some }
 \Lambda\in\mathcal C_d
 \bigr\}.
\]
\end{definition}

Two tests reveal the structure of the answer. The first is global. Applying
\eqref{eq:product-summability-def} to indicator functions of finite coordinate
sets gives
\begin{equation}\label{eq:intro-dimensional-upper}
 \mathrm{prod}(m,d)\le\frac md.
\end{equation}
This is the obstruction detected by the growth of the support inside
Cartesian boxes. When $m/d$ is an integer, equality was obtained in
\cite[Lemma~4.7 and Problem~5.2]{BayartJEMS} through uniformly incident
fractional Cartesian products.

The second obstruction is sectional rather than global. If a product
inequality holds beyond a certain exponent, suitable coordinate sections of
the support must be uniformly finite. For $d>1$, this yields
\[
 \mathrm{prod}(m,d)\le m-\lceil d\rceil+1.
\]
The mechanism is already implicit in \cite[Proposition~5.3]{BayartJEMS}; we
derive the required form directly. At $d=1$, the same numerical bound equals
$m$ and is already contained in \eqref{eq:intro-dimensional-upper}.

The two restrictions arise from different concentrations of the support. Combinatorial dimension measures its global box growth, but does not
by itself prevent a large portion of the support from lying on a
lower-dimensional coordinate section. Our first theorem shows that the optimal exponent is the lower envelope of
these two bounds.

\begin{maintheorem}\label{thm:A}
Let $m\ge2$ and $d\in[1,m]$. Then
\begin{equation}\label{eq:main-product-formula}
 \mathrm{prod}(m,d)
 =\min\left\{\frac md,\,m-\lceil d\rceil+1\right\}.
\end{equation}
Equivalently,
\begin{equation}\label{eq:main-product-piecewise}
\mathrm{prod}(m,d)=
\begin{cases}
m, & d=1,\\[1mm]
m-1, & 1<d\le\dfrac{m}{m-1},\\[3mm]
\dfrac{m}{d}, & \dfrac{m}{m-1}<d\le m-1,\\[3mm]
1, & m\ge3\text{ and }m-1<d\le m.
\end{cases}
\end{equation}
\end{maintheorem}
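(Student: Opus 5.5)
The proof splits into two upper bounds and a matching construction. I would take them in that order and then check the piecewise form.

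\textbf{Global upper bound.} Suppose $\Lambda\in\mathcal C_d$ is $p$-product summable. Take boxes $A_1\times\cdots\times A_m$ with $|A_j|\le n$ that realize $\Lambda(n)$, and let $x^{(j)}=\mathbf 1_{A_j}$. Then \eqref{eq:product-summability-def} gives
\[
\Lambda(n)\le C n^{m/p}.
\]
Since $\limsup_n \log\Lambda(n)/\log n=d$, this forces $d\le m/p$, that is, $p\le m/d$.

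\textbf{Sectional upper bound ($d>1$).} Put $k=\lceil d\rceil$ and assume $p>m-k+1$.

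Fix any $k-1$ coordinates $S$ and any values $a\in\mathbb N^{S}$ of those coordinates. Test the inequality with point masses $x^{(j)}=e_{a_j}$ for $j\in S$, and with arbitrary vectors on the remaining $m-k+1$ coordinates. This shows that the section $\Lambda_a\subset\mathbb N^{m-k+1}$ is $p$-product summable with the same constant $C$. The latter is what we use: for a lower exponent of a set to become large here, we need $p>m-k+1$ on an $(m-k+1)$-fold set.

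Testing $\Lambda_a$ with indicators of sets of size $n$ gives $|\Lambda_a\cap\text{box}|\le Cn^{(m-k+1)/p}$, and the exponent $(m-k+1)/p$ is $<1$. On the other hand, if $\Lambda_a$ contains a \emph{diagonal-free} configuration with $N$ points, spread out so that no two points share a coordinate, then testing with the indicators of the $N$ coordinate values in each slot gives
\[
N\le CN^{(m-k+1)/p}.
\]
This bounds $N$ uniformly. From this I would derive that every $(k-1)$-coordinate section, after removing the lines contained in it, has uniformly bounded "matching number". A Kőnig-type or sunflower argument should then show that $\Lambda$ is covered by boundedly many sets, each contained in a union of sets that are degenerate in some coordinate. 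Counting inside boxes, this bounds $\Lambda(n)\lesssim n^{k-1}$, giving $\dim\Lambda\le k-1<d$, a contradiction.

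I expect this step to be the main obstacle: making the combinatorial bookkeeping precise, in particular the induction on the number of free coordinates that turns bounded matchings into $\Lambda(n)=O(n^{k-1})$. If a cleaner route is needed, I would mimic \cite[Proposition~5.3]{BayartJEMS}, which the introduction says contains the mechanism.

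\textbf{Lower bound (construction).} Let $p_0=\min\{m/d,\,m-k+1\}$. For $p<p_0$, or at $p_0$ itself where possible, I would build $\Lambda$ as a disjoint union of two pieces.

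The first piece is a fractional Cartesian product $F$ of dimension $d$ in $\mathbb N^m$ that is $p$-product summable for all $p<m/d$ or $p\le m/d$. When $m/d$ is an integer this is \cite[Lemma~4.7]{BayartJEMS}. For general $d$, take an algebraic H\"older--Brascamp--Lieb model on uniform hypergraph-type supports, or a sparse random selection inside a $\lceil d\rceil$-dimensional uniformly incident product. Then apply the Bollob\'as--Thomason/Finner inequality with a fractional cover of weight $m/d$ to get the summability estimate.

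When instead $m-k+1<m/d$, replace this with a product of a $(k-1)$-point-mass-free block. Concretely, take
\[
\Lambda=\{(i_1,\dots,i_{k-1})\text{ ranging over a }d\text{-dimensional sparse set in }k\text{ coordinates}\}\times\text{diagonal in the other }m-k\text{ coordinates},
\]
tuned so that the estimate follows from H\"older with exponent $m-k+1$ after grouping the diagonal coordinates.

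Finally, check the case split in \eqref{eq:main-product-piecewise} by comparing $m/d$ with $m-\lceil d\rceil+1$ on each interval:
\begin{itemize}
\item for $d=1$ both bounds equal $m$;
\item for $1<d\le m/(m-1)$ the sectional bound $m-1$ is the smaller;
\item for $m/(m-1)<d\le m-1$ the global bound $m/d$ is the smaller;
\item for $m-1<d\le m$ the sectional bound is $m-m+1=1$.
\end{itemize}
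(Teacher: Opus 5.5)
Your global upper bound is exactly the paper's. The sectional bound is much easier than you expect: you do not need to restrict to diagonal-free configurations, and no matching or sunflower argument is required. Let $r=m-\lceil d\rceil+1$ be the number of free coordinates. Take \emph{any} $R$-point subset of a section. Its coordinate projections have at most $R$ elements each. Testing with their indicators, and with point masses on the fixed coordinates, gives $R\le CR^{r/p}$, hence $R\le C^{p/(p-r)}$. So every section has uniformly bounded size. A box with sides of size at most $n$ meets at most $n^{m-r}$ sections, and therefore $\Lambda(n)=O(n^{\lceil d\rceil-1})$.

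The genuine gap is the lower bound, which carries almost all of the difficulty, and both of your concrete constructions fail.

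\emph{Fractional Cartesian products cannot reach a non-integer $m/d$.} By Finner's criterion, the largest isotropic exponent of $\mathbb N^{\mathcal U}$ is the integer $\min_t\deg_{\mathcal U}(t)$. Since $w_j\equiv 1/\min_t\deg_{\mathcal U}(t)$ is a fractional edge cover, $\rho^*(\mathcal U)\le m/\min_t\deg_{\mathcal U}(t)$. So at dimension $d$ the exponent is at most $\lfloor m/d\rfloor$. Thinning a host preserves the host's inequality, but Finner gives nothing better; for example, thinning a $\lceil d\rceil$-dimensional uniformly incident product gives only $m/\lceil d\rceil<m/d$. To reach every $q<m/d$ you need one of two things:
\begin{itemize}
\item hosts that are sharp at dimensions $d_0\downarrow d$, built from non-coordinate lattice maps via the discrete HBL rank criterion and then thinned. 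The paper uses Vandermonde forms for integer $d$ and rationalized tight fusion frames for $2<d<m/2$.
\item an improved estimate proved for the random set itself: restricted-type bounds $|H\cap(A_1\times\cdots\times A_m)|\le K\prod_j|A_j|^\alpha$ with $\alpha\downarrow d/m$. These are converted into $\ell_q$ bounds through the Lorentz space $\ell_{1/\alpha,1}$ and H\"older over disjoint blocks. The paper does this on two-dimensional Vandermonde images for $m/(m-1)<d<2$, and on $[N]^m$ for $m/2\le d<m-1$, where $\alpha>1/2$ is forced.
\end{itemize}

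\emph{Your plateau construction collapses.} Suppose coordinates $2,\ldots,m$ are all driven by the second coordinate of a planar set $S$. Testing with point masses $e_r$ in those slots costs nothing in $\ell_p$. This forces every fiber $\{i_1:(i_1,r)\in S\}$ to have at most $C^{p/(p-1)}$ points, so $\dim\Lambda\le 1$ for every $p>1$. The correct host is the Loomis--Whitney support $S_j=[m]\setminus\{j\}$, with all degrees equal to $m-1$ and $\rho^*=m/(m-1)$. It is randomly thinned to each $1<d<m/(m-1)$ while keeping exact dimension $d$. For $m-1<d\le m$, exponent $1$ holds trivially on every support, so one only needs a support of dimension exactly $d$.
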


The formula displays three distinct behaviors. Immediately to the right of
$d=1$, the sectional obstruction creates a plateau at $m-1$. Across the
central range, global dimensional growth becomes decisive and the exponent
follows the curve $m/d$. Once $d>m-1$, even one-dimensional coordinate
sections force the terminal value $1$. The discontinuities reflect changes in the geometry that controls product
summability.

{
\subsection{The multilinear summability invariant}

Unlike the product problem, which is formulated through a single infinite
support of prescribed combinatorial dimension, Problem~5.1 is finite-dimensional
and asymptotic. The parameter $d$ plays the same geometric role: it prescribes
the polynomial size $n^d$ of the active coefficient set. Throughout this
subsection, $\ell_p^n$ is considered over $\mathbb C$, and
$\mathbb T:=\{z\in\mathbb C:|z|=1\}$.

For $n\in\mathbb N$ write $[n]=\{1,\ldots,n\}$. If
$\Lambda\subset[n]^m$ and $\varepsilon_{\mathbf i}\in\mathbb T$ for
$\mathbf i=(i_1,\ldots,i_m)\in\Lambda$, set
\begin{equation}\label{mult:eq:intro-form}
 T(x^{(1)},\ldots,x^{(m)})
 :=\sum_{\mathbf i\in\Lambda}\varepsilon_{\mathbf i}
 x^{(1)}_{i_1}\cdots x^{(m)}_{i_m}.
\end{equation}
Bayart's Problem~5.1 isolates the largest exponent for which one can place
$n^d$ unimodular coefficients in such a form while keeping its norm
subpolynomial in $n$.

\begin{definition}\label{mult:def:gamma-mult}
Let $m\ge2$ and $d\in[1,m]$. Denote by
$\Gamma_{\mathrm{mult}}(m,d)$ the set of all $p\ge1$ with the following
property: for every $\eta>0$ there exists $C_\eta\ge1$ such that, for every
$n\in\mathbb N$, one can find $\Lambda_n\subset[n]^m$ and unimodular
coefficients $(\varepsilon_{\mathbf i})_{\mathbf i\in\Lambda_n}$ satisfying
\[
 |\Lambda_n|\ge C_\eta^{-1}n^d
 \qquad\text{and}\qquad
 \|T_n\|_{\mathcal L(^m\ell_p^n)}\le C_\eta n^\eta,
\]
where $T_n$ is given by \eqref{mult:eq:intro-form}. Set
\[
 \gamma_{\mathrm{mult}}(m,d):=\sup\Gamma_{\mathrm{mult}}(m,d).
\]
\end{definition}

{\hypersetup{citecolor=red}
The partial Steiner-system constructions of
\cite{GalicerMuroSevillaPeris} yield
$\gamma_{\mathrm{mult}}(m,m-1)\ge2$; see
\cite[Section~5.1]{BayartJEMS}.}

Bayart proved the general upper estimate
\begin{equation}\label{mult:eq:Bayart-upper}
 \gamma_{\mathrm{mult}}(m,d)
 \le
 \min\left\{m-\lceil d\rceil+1,\frac{2m}{d+1}\right\}
\end{equation}
and determined the value in several regimes
\cite[Proposition~5.3 and Corollary~5.4]{BayartJEMS}. Problem~5.1 asks for the exact value of this invariant. Our second main theorem gives the full
answer.

\begin{maintheoremb}\label{mult:thm:B}
Let $m\ge2$ and $d\in[1,m]$. Then
\begin{equation}\label{mult:eq:main-formula}
 \gamma_{\mathrm{mult}}(m,d)
 =\min\left\{m-\lceil d\rceil+1,\frac{2m}{d+1}\right\}.
\end{equation}
\end{maintheoremb}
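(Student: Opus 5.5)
Since Bayart's estimate \eqref{mult:eq:Bayart-upper} already gives the upper bound, the whole task is the lower bound: for each $p$ strictly below $\min\{m-\lceil d\rceil+1,\,2m/(d+1)\}$ and each $\eta>0$, we must construct sets $\Lambda_n\subset[n]^m$ with $|\Lambda_n|\gtrsim n^d$ and unimodular signs such that $\|T_n\|_{\mathcal L(^m\ell_p^n)}\le C_\eta n^\eta$. We then take the supremum over such $p$.

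\textbf{Step 1: the random-sign norm estimate.} The basic tool is a norm estimate for forms with random signs on a support $\Lambda\subset[n]^m$. Choose the $\varepsilon_{\mathbf i}$ as independent Steinhaus (or Rademacher) variables. A Bennett/Carl-type chaining or entropy argument should give, with positive probability,
\[
\|T\|_{\mathcal L(^m\ell_p^n)}\lesssim (\log n)^{C}\,\max_{S\subsetneq[m]}\ \sup_{\text{slices}} \bigl(\text{size of sections of }\Lambda\text{ in the coordinates }S\bigr)^{\alpha_S(p)}.
\]
For $p\ge2$ this is the standard estimate $\|T\|\lesssim \sqrt{\log n}\,\sup_{x\in B}\bigl(\sum_{\mathbf i\in\Lambda}|x^{(1)}_{i_1}\cdots x^{(m)}_{i_m}|^2\bigr)^{1/2}$, refined by a union bound over an $\varepsilon$-net of the product of balls. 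The supremum of this square function is a mixed-norm quantity that is controlled by the regularity of $\Lambda$: how many points lie on each coordinate fiber.

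\textbf{Step 2: a regular support in the range $p\ge2$.} In this range the target is $p<2m/(d+1)$. We choose $\Lambda_n$ to be a uniformly incident support of size $\approx n^d$: every $k$-dimensional coordinate fiber contains at most $n^{\max(0,\,d-(m-k))+o(1)}$ points. Such supports come from the algebraic HBL model supports or from the sparse selection mentioned in the introduction, for instance as a random subset of density $n^{d-m}$ with a union bound, which yields regular fibers up to $n^{o(1)}$. For such a support we then check that the square-function supremum is at most $n^{o(1)}$ exactly when $\frac{m}{p}\ge\frac{d+1}{2}$. The point is that the square function $\sum|x_{i_1}|^2\cdots$ with $x^{(j)}$ in the unit ball of $\ell_p$ is extremized by flat vectors supported on $n^{t_j}$ coordinates, which reduces the check to a linear-programming verification over the exponents $t_j$.

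\textbf{Step 3: the range $p<2$ and the integer constraint.} When $m-\lceil d\rceil+1$ is the smaller bound, we use a product structure. Write $k=\lceil d\rceil-1$ and take $\Lambda_n=\Delta\times\Lambda'$, where $\Delta$ is a diagonal-type set absorbing $k$ coordinates into one, so that effectively we work with $m-k$ variables, and $\Lambda'$ is a near-full support in those remaining variables with dimension adjusted by sparse selection. Diagonal blocks cost nothing in norm at $p$ up to the reduced multilinearity, and a Bohnenblust--Hille/random-sign bound on $m-k$ variables gives a subpolynomial norm for $p<m-k+1$. The two regimes then have to be glued. I would compare $m-\lceil d\rceil+1$ with $2m/(d+1)$ and use whichever construction matches. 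When the square-function regime forces $p<2$, we must interpolate: apply the regular random construction to $m'$ grouped coordinates, with diagonal identification of the other coordinates.

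\textbf{Main obstacle.} I expect the hardest step to be the sharp probabilistic bound for $1\le p<2$ on sparse, non-full supports. Here the standard $\ell_2$ chaining loses, and one needs a decoupled estimate in which every partial contraction of $\Lambda$ is controlled. The first thing to try is induction on $m$. At each step we bound
\[
\sup_{x^{(m)}}\Bigl\|\sum_{i_m}x^{(m)}_{i_m}T_{i_m}\Bigr\|
\]
via the maximum norm of the slices $T_{i_m}$ and an $\ell_{p'}$-type Rademacher sum inequality, with the regularity of $\Lambda$ ensuring that the slices inherit the inductive hypothesis with dimension reduced by one. The other delicate point is realizing non-integer $d$ with exact regularity, which I would handle via the sparse-selection transfer.
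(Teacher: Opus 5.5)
\textbf{There is a genuine gap, and it is in Step~1. Everything downstream depends on it.}

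Your inequality $\|T\|\lesssim\sqrt{\log n}\,\sup_x\bigl(\sum_{\mathbf i\in\Lambda}\prod_j|x^{(j)}_{i_j}|^2\bigr)^{1/2}$ is false. A union bound over a net of $B_{\ell_p^n}\times\cdots\times B_{\ell_p^n}$ involves $e^{O(mn)}$ points and costs a factor $\sqrt n$, not $\sqrt{\log n}$. This loss is genuine: for $m=2$, $\Lambda=[n]^2$, $p=2$, every unimodular matrix has operator norm at least $\sqrt n$, since its Hilbert--Schmidt norm is $n$, while the square function equals $1$.

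For Problem~5.1 this is fatal, not a technicality. The target is $n^\eta$, and the square-function supremum is always at least $1$ (test on a single point of $\Lambda$), so no global net argument can give better than $n^{1/2}$. Your estimate even contradicts the known upper bound. The random support of density $n^{d-m}$ is $q$-product summable with constants uniform in $n$ for $q<\min\{2,m/d\}$ (Lemmas~\ref{lem:sparse-random-blocks-general} and~\ref{lem:restricted-lorentz}). Combined with your estimate, this would put every $p<\min\{4,2m/d\}$ into $\Gamma_{\mathrm{mult}}(m,d)$. For instance $p=7/2$ would be admissible at $m=4$, $d=3/2$, although Bayart's bound gives $\gamma_{\mathrm{mult}}(4,3/2)\le 3$. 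So the flat-vector linear program in Step~2 checks the wrong quantity, and the threshold $m/p\ge(d+1)/2$ is asserted rather than derived.

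\textbf{The missing idea is to localize the entropy.} Your choice of support, a Bernoulli set of density $n^{d-m}$, is the right one.
\begin{itemize}
\item Choose Rademacher signs so that, simultaneously for every rectangular trace $F=\Lambda\cap(A_1\times\cdots\times A_m)$, the form restricted to $F$ with inputs of modulus at most one is bounded by $C_m\sqrt{M(F)H_n(F)}$. Here $M(F)=|F|$, $v_j(F)=|\pi_j(F)|$ and $H_n(F)=1+\sum_j v_j(F)\log(en/v_j(F))$. The union bound now pays only for rectangles and nets of the size of $F$.
\item Split each $x^{(j)}\in B_{\ell_p^n}$ into dyadic rank blocks on which $|x^{(j)}_i|\lesssim|A_{j,k}|^{-1/p}$. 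Each of the $(1+\log_2 n)^m$ block products then contributes at most $C\,P(F)^{-1/p}\sqrt{M(F)H_n(F)}$, with $P(F)=\prod_j v_j(F)$.
\item What you must prove is therefore the hereditary condition $M(F)H_n(F)\le n^{\rho}P(F)^{2/p}$ for every $F\subset\Lambda$.
\item A first-moment count over parameter classes $(M,v_1,\ldots,v_m)$ shows the Bernoulli support satisfies this condition. Write $V=\max_j v_j$ and fix a small $\tau>0$.
 \begin{itemize}
 \item Subsets with $M\ge Vn^{\tau}$ are excluded exactly because $2m/p>d+1$.
 \item Subsets with $M<Vn^{\tau}$ have at most $r\le p$ projections comparable to $V$. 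By integrality, $r\le m-\lceil d\rceil$ whenever $p<m-\lceil d\rceil+1$, and this excludes them too.
 \end{itemize}
\end{itemize}
Both terms of the minimum thus come from one random support, and nothing has to be glued.

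\textbf{Step~3 targets a regime that does not arise, and its construction fails.}
\begin{itemize}
\item For $d\le m-2$ the minimum exceeds $2$, so only $p>2$ matters. For $d>m-2$ the values $2$ and $1$ are exactly Bayart's Corollary~5.4.
\item The diagonal-product construction fails on size grounds. Tying $\lceil d\rceil$ coordinates together leaves $m-\lceil d\rceil+1$ free indices, so $|\Lambda_n|\le n^{m-\lceil d\rceil+1}$. For $m=6$, $d=4$ this is $n^3<n^4$.
\item The subpolynomial bound you invoke for the reduced support is the theorem itself in fewer variables, so the argument is circular.
\end{itemize}
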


The two terms come from different configurations inside the same random
support. Global dispersion yields the restriction $p<2m/(d+1)$, while
concentration in deep coordinate fibers yields
$p<m-\lceil d\rceil+1$. A hereditary local sparsity inequality records both
phenomena; an independent random-sign argument controls every rectangular
trace, and a dyadic decomposition turns this local control into the required
operator norm bound.

Bayart also asked explicitly whether $\gamma_{\mathrm{mult}}(m,d)$ must always
be an integer. This answers Bayart's question.

\begin{corollary}\label{mult:cor:noninteger}
For $m=4$ and $d=2$,
\[
 \gamma_{\mathrm{mult}}(4,2)=\frac83.
\]
Consequently $\gamma_{\mathrm{mult}}(m,d)$ need not be an integer, and
Bayart's Question~5.7 has a negative answer.
\end{corollary}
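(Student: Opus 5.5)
The corollary follows by specializing Theorem~\ref{mult:thm:B} to $m=4$ and $d=2$. We simply evaluate the two terms of \eqref{mult:eq:main-formula}. Since $\lceil 2\rceil=2$, the sectional term is
\[
 m-\lceil d\rceil+1=4-2+1=3.
\]
The dispersion term is
\[
 \frac{2m}{d+1}=\frac{8}{3}.
\]
Since $8/3<3$, the minimum equals $8/3$, and Theorem~\ref{mult:thm:B} gives $\gamma_{\mathrm{mult}}(4,2)=8/3$.

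For the consequence, observe that $2<8/3<3$. So $8/3$ is not an integer, and hence $\gamma_{\mathrm{mult}}(m,d)$ is not integer-valued in general. Bayart's Question~5.7 asks exactly whether this invariant must always be an integer, so its answer is negative.

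There is no genuine obstacle at this stage: all the substance lies in Theorem~\ref{mult:thm:B}. The upper bound $\gamma_{\mathrm{mult}}(4,2)\le 8/3$ is already contained in Bayart's estimate \eqref{mult:eq:Bayart-upper}. The new input is the matching lower bound, namely the random-support construction with $|\Lambda_n|\gtrsim n^2$ in $[n]^4$ and norm $O(n^\eta)$ on $\ell_p^n$ for every $p<8/3$. If one wanted to present the corollary independently of the full theorem, one would check only that construction in the case $(m,d)=(4,2)$. At $p$ close to $8/3$, the binding constraint is the global dispersion term, so the local sparsity estimate on rectangular traces would be the step to verify.
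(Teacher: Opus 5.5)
Your proposal is correct and matches the paper's proof exactly: the corollary follows by evaluating the formula of Theorem~\ref{mult:thm:B} at $(m,d)=(4,2)$, which gives $\min\{3,8/3\}=8/3\notin\mathbb Z$ and so answers Question~5.7 negatively. Your remark that the substance lies in the lower-bound construction is accurate: Proposition~\ref{mult:prop:random-support} applies here because $d=2\in[1,m-2]$, while the upper bound is Bayart's. That remark is not needed for the corollary itself.
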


Equivalently, the product invariant is
\[
 \mathrm{prod}(m,d)=\min\left\{\frac md,m-\lceil d\rceil+1\right\},
\]
whereas the multilinear invariant replaces the global term $m/d$ by
$2m/(d+1)$. Thus cancellation enlarges the admissible exponent exactly when
the global dimensional obstruction is the active one.

}

\subsection{Sharp Hardy--Littlewood realization}

The prescribed-support Hardy--Littlewood problem belongs to the classical Hardy--Littlewood line. It begins
with Littlewood's $4/3$ inequality \cite{Littlewood1930} and the bilinear
inequalities of Hardy and Littlewood \cite{HardyLittlewood1934}, and continues
through the multilinear theory on $\ell_p$ spaces initiated by
Praciano-Pereira \cite{PracianoPereira}; see also
\cite{AronNunezPellegrinoSerrano,DefantSevilla}. In the prescribed-support
setting, however, the support is no longer passive. Its combinatorial geometry
enters the critical exponent and becomes part of the extremal problem.

For $\mathbb K\in\{\mathbb{R},\mathbb C\}$, set
\[
 Z_p(\mathbb K):=
 \begin{cases}
  \ell_p(\mathbb K),&1\le p<\infty,\\
  c_0(\mathbb K),&p=\infty,
 \end{cases}
\]
and denote by $(e_n)_{n\ge1}$ the canonical basis in each $Z_p(\mathbb K)$.
For
\[
 \mathbf p=(p_1,\ldots,p_m)\in[1,\infty]^m,
\]
write
\[
 \left|\frac1{\mathbf p}\right|
 :=
 \sum_{j=1}^m\frac1{p_j},
 \qquad
 \frac1\infty:=0.
\]

For a prescribed support, the complex Hardy--Littlewood exponent is defined
in \cite[pp.~1162--1163]{BayartJEMS}. We write
$\mathrm{HL}_{\mathbb K}(\Lambda;\mathbf p)$ when the scalar field is
relevant; Bayart's notation corresponds to $\mathbb K=\mathbb C$.

\begin{definition}[\cite{BayartJEMS}, pp.~1162--1163]
\label{def:HL-exponent}
Let $\Lambda\subset\mathbb{N}^m$ be infinite and let
$\mathbf p=(p_1,\ldots,p_m)\in[1,\infty]^m$. For a continuous $m$-linear form
\[
 T:
 Z_{p_1}(\mathbb K)\times\cdots\times Z_{p_m}(\mathbb K)
 \longrightarrow\mathbb K
\]
and $s\in[1,\infty)$, set
\[
 \|T\|_{\Lambda,s}
 :=
 \left(
 \sum_{\mathbf i\in\Lambda}
 |T(e_{i_1},\ldots,e_{i_m})|^s
 \right)^{1/s}.
\]
We say that $s$ is \emph{admissible for $(\Lambda,\mathbf p)$} if there exists
$C>0$ such that
\begin{equation}\label{eq:HL-admissible-def}
 \|T\|_{\Lambda,s}\le C\|T\|
 \qquad
 \bigl(
 T\in
 \mathcal L(
 Z_{p_1}(\mathbb K),\ldots,Z_{p_m}(\mathbb K);\mathbb K
 )
 \bigr),
\end{equation}
where $\|T\|$ denotes the usual multilinear operator norm. The
Hardy--Littlewood exponent of $\Lambda$ at $\mathbf p$ over $\mathbb K$ is
\[
 \mathrm{HL}_{\mathbb K}(\Lambda;\mathbf p)
 :=
 \inf\bigl\{
 s\ge1:
 \text{$s$ is admissible for $(\Lambda,\mathbf p)$}
 \bigr\},
\]
with the convention $\inf\varnothing=\infty$.
\end{definition}

Bayart's theorem exhibits a sharp transition at the Hilbertian boundary
\[
 \left|\frac1{\mathbf p}\right|=\frac12.
\]
When $|1/\mathbf p|<1/2$, the prescribed dimension remains visible:
\cite[Theorem~1.1(a)]{BayartJEMS} gives
\begin{equation}\label{eq:Bayart-HL-lower-unified}
 \mathrm{HL}_{\mathbb C}(\Lambda;\mathbf p)^{-1}
 \ge
 \frac{\dim(\Lambda)+1}{2\dim(\Lambda)}
 -
 \frac1{\dim(\Lambda)}
 \left|\frac1{\mathbf p}\right|.
\end{equation}
If the combinatorial dimension is exact, the reciprocal of the right-hand
side is admissible. Bayart constructed sharp realizations in several ranges
of the dimension in \cite[Theorem~1.2]{BayartJEMS}.

For
\[
 \frac12\le\left|\frac1{\mathbf p}\right|<1,
\]
the geometry changes. The dimensional term disappears, and
\cite[Theorem~1.1(b)]{BayartJEMS} gives
\[
 \mathrm{HL}_{\mathbb C}(\Lambda;\mathbf p)^{-1}
 \ge
 1-\left|\frac1{\mathbf p}\right|,
\]
with the reciprocal exponent admissible on every infinite support. The
critical exponent is unchanged over the real field, as shown in
Proposition~\ref{prop:HL-scalar-invariance}.

The sharp estimates in the two regimes are known at the level of universal
upper bounds. We realize them on supports of every prescribed dimension, with
a single support, fixed once $m$ and $d$ are given, that is sharp for every
$\mathbf p$ with $|1/\mathbf p|<1$.

\begin{maintheoremc}\label{thm:C}
Let $m\ge2$ and $d\in[1,m]$. There exists an infinite support
$\Lambda\subset\mathbb{N}^m$ of exact combinatorial dimension $d$ such that,
for each $\mathbb K\in\{\mathbb{R},\mathbb C\}$ and every
$\mathbf p\in[1,\infty]^m$ with $|1/\mathbf p|<1$,
\begin{equation}\label{eq:main-HL-unified}
 \mathrm{HL}_{\mathbb K}(\Lambda;\mathbf p)^{-1}
 =
 \begin{cases}
 \dfrac{d+1}{2d}
 -\dfrac1d\left|\dfrac1{\mathbf p}\right|,
 & \left|\dfrac1{\mathbf p}\right|\le\dfrac12,\\[8pt]
 1-\left|\dfrac1{\mathbf p}\right|,
 & \dfrac12<\left|\dfrac1{\mathbf p}\right|<1.
 \end{cases}
\end{equation}
\end{maintheoremc}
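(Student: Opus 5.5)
Since the statement is an equality of Hardy--Littlewood exponents, the proof has two halves. The \emph{upper bounds on $\mathrm{HL}$}, i.e.\ admissibility of the displayed reciprocal exponents, will come from \cite[Theorem~1.1]{BayartJEMS}. For $|1/\mathbf p|\le1/2$ this requires exact dimension $d$; for $1/2<|1/\mathbf p|<1$ it holds for any infinite support. Over $\mathbb R$ we transfer via Proposition~\ref{prop:HL-scalar-invariance}, or simply by complexifying real forms at a constant cost. The real content is therefore the \emph{lower bounds}: one support $\Lambda$ of exact dimension $d$ on which, for every $\mathbf p$, no smaller $s$ is admissible.

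\textbf{Construction of $\Lambda$.} Take $\Lambda=\bigcup_k\Lambda_k$, a disjoint union of finite blocks placed in pairwise disjoint coordinate windows $I_k^{(1)}\times\cdots\times I_k^{(m)}$ with $I_k^{(j)}$ an interval of length $n_k$, where $n_k$ grows superexponentially. Each block $\Lambda_k\subset I_k^{(1)}\times\cdots\times I_k^{(m)}$ has $|\Lambda_k|\asymp n_k^d$ and satisfies the hereditary sparsity $|\Lambda_k\cap(A_1\times\cdots\times A_m)|\le C\,n^{d}$ whenever $|A_j|\le n$, uniformly in $k$. Because the windows are disjoint and the $n_k$ are lacunary, a box with sides $\le n$ meets essentially only blocks with $n_k\lesssim n$, plus one partial block. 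Summing the geometric series of $n_k^d$ gives $\Lambda(n)\le C'n^d$, so the dimension is exact; the lower bound $\Lambda(n_k)\ge|\Lambda_k|$ gives $\dim\Lambda=d$. For the blocks I would use the fractional Cartesian product / Hölder--Brascamp--Lieb models when $d$ is rational in the appropriate form, and otherwise the random sparse selection with the hereditary local sparsity inequality that underlies Theorem~B. This gives $\Lambda_k\subset[n_k]^m$ of size $\asymp n_k^d$ together with unimodular $\varepsilon$ such that $\|T_k\|_{\mathcal L(^m\ell_2^{n_k})}\le C_\eta n_k^{(1+\eta')/2}$, i.e.\ the optimal random-form norm on a $d$-dimensional support. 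This needs the norm bound for all $\mathbf p$, not just $\ell_2$.

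\textbf{Lower bound for $|1/\mathbf p|\le1/2$.} Test admissibility on $T_k$ restricted to $\Lambda_k$. By a Kahane--Salem--Zygmund type estimate for random signs on a support with hereditary sparsity, obtained by Bernoulli or Steinhaus chaos bounds on rectangular traces plus a dyadic decomposition as in Theorem~B, we get
\[
\|T_k\|_{\mathcal L(\ell_{p_1}^{n_k},\ldots,\ell_{p_m}^{n_k})}\lesssim_\eta n_k^{\frac{d+1}{2}-|1/\mathbf p|+\eta},
\]
while $\|T_k\|_{\Lambda,s}=|\Lambda_k|^{1/s}\asymp n_k^{d/s}$. Admissibility forces $d/s\le \frac{d+1}{2}-|1/\mathbf p|+\eta$ for all $\eta$, which is the claimed formula. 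Over $\mathbb R$ we use $\pm1$ signs, and the chaos bound is identical. \textbf{Main obstacle.} The hardest point is making this norm estimate hold uniformly over all $\mathbf p$ with $|1/\mathbf p|\le1/2$ at once, with a single sign choice per block (pick signs that are good for a finite net of $\mathbf p$ and interpolate), and proving the sparsity-based chaos bound for $\ell_{p_j}$ balls with mixed exponents. I would first try Bayart's random construction estimate \cite{BayartJEMS} combined with the sparsity inequality, reducing mixed $\mathbf p$ to rectangular traces $A_1\times\cdots\times A_m$ of dyadic level sets of the vectors.

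\textbf{Lower bound for $1/2<|1/\mathbf p|<1$.} Here the support's dimension is irrelevant, and a single fiber suffices. Since $d\ge1$, I would arrange each block, or add one extra line $\{(i,c_2,\ldots,c_m)\}$ in a separate window (which does not change the dimension), to contain a full diagonal-type set $D_k=\{(i,\sigma_2(i),\ldots,\sigma_m(i)):i\in I_k\}$ with $\sigma_j$ injective. The diagonal form $\sum_{i}x^{(1)}_i\cdots x^{(m)}_{\sigma_m(i)}$ has norm $\le1$ by Hölder when $|1/\mathbf p|\le1$, and its coefficient sum is $n_k^{1/s}$. This gives only $s\ge\infty$-type information, so I would instead take weighted diagonal coefficients $a_i=i^{-(1-|1/\mathbf p|)}$-type, or flat coefficients on $n$ diagonal points with norm $n^{1-|1/\mathbf p|}$ (Hölder with the dual exponent). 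Admissibility then gives $n^{1/s}\lesssim n^{1-|1/\mathbf p|}$, i.e.\ $1/s\le1-|1/\mathbf p|$. Diagonals have dimension $1\le d$, so including them preserves exact dimension $d$.
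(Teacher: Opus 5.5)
\textbf{Genuine gap: the block norm estimate.} Your architecture is the paper's: finite blocks in pairwise disjoint coordinate windows plus a diagonal; admissibility from \cite[Theorem~1.1]{BayartJEMS}, transferred to $\mathbb R$ by Proposition~\ref{prop:HL-scalar-invariance}; random-sign forms on the blocks for $|1/\mathbf p|\le1/2$; and flat diagonal forms with H\"older for $1/2<|1/\mathbf p|<1$. The gap is in the step that carries the content, the bound $\|T_k\|\lesssim n_k^{(d+1)/2-|1/\mathbf p|+\eta}$. You derive it from $|\Lambda_k|\asymp n_k^d$ and the box estimate $|\Lambda_k\cap(A_1\times\cdots\times A_m)|\le Cn^d$ alone. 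These hypotheses imply neither that bound nor the theorem.

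Take $m=3$, $d=2$ and $\Lambda_k=\{c_k\}\times I^{(2)}_k\times I^{(3)}_k$ with $c_k\in I^{(1)}_k$. All your requirements hold and the dimension is exactly $2$, yet for $\mathbf p=(4,\infty,\infty)$ the exponent $4/3$ is admissible. Put $B_k(y,z):=T(e_{c_k},y,z)$ on the $k$th window, with norming vectors $y_k,z_k$ there. Test $T$ at $x=\sum_ka_k\eta_k\xi_ke_{c_k}$, $y=\sum_k\eta_ky_k$, $z=\sum_k\xi_kz_k$ and average over independent signs $\eta,\xi$. This leaves $\sum_ka_kB_k(y_k,z_k)$, so $(\sum_k\|B_k\|^{4/3})^{3/4}\le\|T\|$. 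Littlewood's $4/3$ inequality on each $B_k$ then gives $\|T\|_{\Lambda,4/3}\le\sqrt2\,\|T\|$, whereas the theorem requires $\mathrm{HL}=8/5$.

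The missing ingredient is anti-concentration on one-coordinate fibers, $\Delta(E)\le C_1N^{d-1}$. The paper builds this into its blocks in Lemma~\ref{lem:sparse-random-blocks-general}, by a union bound over the $mN$ fibers using $d>1$. Interpolating between the all-$\ell_\infty$ bound $|E|\le C_0N^d$ and the one-$\ell_1$ bounds $\Delta(E)\le C_1N^{d-1}$ gives $\sup_x(\sum_{\mathbf i\in E}\prod_j|x^{(j)}_{i_j}|^2)^{1/2}\lesssim N^{d/2-|1/\mathbf p|}$ (Corollary~\ref{cor:HL-quadratic}). Then \cite[Proposition~4.1]{BayartJEMS} supplies the extra factor $N^{1/2}$.

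Your rectangular-trace/dyadic route also works once the fiber bound is available. A dyadic piece $F$ with projection sizes $v_j$ and $M$ points satisfies $M\le(C_0N^d)^{\theta_0}\prod_j(C_1v_jN^{d-1})^{\theta_j}$, where $\theta_j=2/p_j$ and $\theta_0=1-\sum_j\theta_j$. Since $H_N(F)\le1+mN$, this gives $\prod_jv_j^{-1/p_j}\sqrt{MH_N(F)}\lesssim N^{(d+1)/2-|1/\mathbf p|}$, at the cost of the $(1+\log N)^m$ dyadic pieces. Because the signs of Proposition~\ref{mult:prop:sign-lemma} do not depend on $\mathbf p$, this route even gives the uniformity you wanted. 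Bernoulli blocks have the fiber property; HBL or fractional Cartesian blocks would have to be checked for it.

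\textbf{Smaller points.}
\begin{itemize}
\item The ``main obstacle'' you name is not one. Admissibility is tested one $\mathbf p$ at a time, so the test forms, and hence the signs, may depend on $\mathbf p$. Only the support must be fixed, and the paper chooses signs separately for each $\mathbf p$.
\item Your exact-dimension argument is wrong as stated. A box with $|A_j|\le n$ can meet arbitrarily many large blocks, one index per window. The correct argument uses the block budget $\sum_k\max_j|A_j\cap I^{(j)}_k|\le mn$, the uniform box estimate in each block, and $\sum_kb_k^d\le(\sum_kb_k)^d$. Lacunarity plays no role.
\item A line $\{(i,c_2,\ldots,c_m)\}$ only yields $1/s\le1-1/p_1$. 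You need a genuinely diagonal set with injective $\sigma_j$, such as $\mathrm{Diag}$ in the paper. With it, only the H\"older bound $\|T_a\|\le\|a\|_r$ is used.
\end{itemize}
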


The same support works simultaneously over the real and complex scalar fields
and for every admissible anisotropic exponent. Thus Theorem~C gives a single
support realizing the sharp Hardy--Littlewood exponents throughout the range
$|1/\mathbf p|<1$.

\medskip
\noindent\textbf{Notation.}
Throughout, $m\ge2$ and $\mathbb K\in\{\mathbb{R},\mathbb C\}$.

\begin{center}
\small
\begin{tabularx}{0.94\textwidth}{@{}lX@{}}
\hline
Symbol & Meaning\\
\hline
\(\mathbb K\) & scalar field, \(\mathbb{R}\) or \(\mathbb C\)\\
\([N]\) & \(\{1,\ldots,N\}\)\\
\(\mathbb N_0\) & \(\{0,1,2,\ldots\}\)\\
\(|E|\) & cardinality of a set \(E\), possibly infinite\\
\(\Lambda\subset\mathbb{N}^m\) & support\\
\(\Lambda(n)\) & maximal number of support points in a Cartesian box with at most \(n\) points in each factor\\
\(\dim(\Lambda)\) & combinatorial dimension of \(\Lambda\)\\
\(\mathrm{prod}(m,d)\) & extremal product-summability quantity in dimension \(d\)\\
\(\Gamma_{\mathrm{mult}}(m,d)\) & admissible exponents in Bayart's multilinear summability problem\\
\(\gamma_{\mathrm{mult}}(m,d)\) & multilinear summability invariant in dimension \(d\)\\
\(\mathcal U=(S_1,\ldots,S_m)\) & incidence family defining a fractional Cartesian model\\
\(I_{\mathcal U}\) & incidence matrix of \(\mathcal U\)\\
\(\pi_S\) & coordinate projection onto the coordinates indexed by \(S\)\\
\(\Pi_{\mathcal U}\), \(\mathbb{N}^{\mathcal U}\) & projection map and associated support\\
\(\rho^*(\mathcal U)\) & fractional edge-cover number\\
\(J(A_1,\ldots,A_m)\) & join \(\{u\in\mathbb{N}^\ell:\pi_{S_j}(u)\in A_j\text{ for all }j\}\)\\
\(E_{j,r}\), \(\Delta(E)\) & one-coordinate fiber and maximal fiber cardinality\\
\(\mathbf p=(p_1,\ldots,p_m)\) & anisotropic exponent vector\\
\(\lvert1/\mathbf p\rvert\) & \(\sum_j1/p_j\), with \(1/\infty=0\)\\
\(Z_p(\mathbb K)\) & \(\ell_p(\mathbb K)\) for \(p<\infty\), and \(c_0(\mathbb K)\) for \(p=\infty\)\\
\(\mathrm{HL}_{\mathbb K}(\Lambda;\mathbf p)\) & Hardy--Littlewood exponent on \(\Lambda\)\\
\hline
\end{tabularx}
\end{center}

\section{Product summability at prescribed dimension}\label{sec:problem52-prep}

The two endpoint dimensions are realized by the full Cartesian product and by the diagonal.

\begin{example}\label{ex:endpoint-dimensions}
Let
\[
 \Lambda_1:=\mathbb{N}^m,
 \qquad
 \Lambda_2:=\{(r,\ldots,r):r\in\mathbb{N}\}.
\]
If $A_1,\ldots,A_m\subset\mathbb{N}$ and $|A_j|\le n$, then
\[
 |(A_1\times\cdots\times A_m)\cap\Lambda_1|
 =\prod_{j=1}^m|A_j|\le n^m,
\]
with equality when $A_1=\cdots=A_m=[n]$. Hence
$\Lambda_1(n)=n^m$ and $\dim(\Lambda_1)=m$. On the other hand,
\[
 (A_1\times\cdots\times A_m)\cap\Lambda_2
 =\{(r,\ldots,r):r\in A_1\cap\cdots\cap A_m\},
\]
so this intersection has at most $n$ elements. Again equality is attained
for $A_1=\cdots=A_m=[n]$. Thus $\Lambda_2(n)=n$ and
$\dim(\Lambda_2)=1$.
\end{example}

For $m\ge3$, the four regimes in Theorem~\ref{thm:A} can be visualized schematically as follows.

\begin{center}
\begin{tikzpicture}[x=1.55cm,y=0.72cm]
  \draw[->] (0.75,0) -- (5.45,0) node[right] {$d$};
  \draw[->] (1,0) -- (1,4.65) node[above] {$\mathrm{prod}(m,d)$};

  \fill[] (1,4.05) circle (1.8pt);
  \draw[fill=white] (1,3.25) circle (1.8pt);
  \draw[line width=0.7pt] (1.03,3.25) -- (2.18,3.25);

  \draw[line width=0.7pt]
    (2.18,3.25) .. controls (2.4,2.8) and (3,1.8) .. (4.18,1.65);
  \fill[] (4.18,1.65) circle (1.8pt);

  \draw[fill=white] (4.18,0.82) circle (1.8pt);
  \draw[line width=0.7pt] (4.21,0.82) -- (5.02,0.82);
  \fill[] (5.02,0.82) circle (1.8pt);

  \draw[] (1,0.08)--(1,-0.08) node[below] {$1$};
  \draw[] (2.18,0.08)--(2.18,-0.08) node[below] {$\frac{m}{m-1}$};
  \draw[] (4.18,0.08)--(4.18,-0.08) node[below] {$m-1$};
  \draw[] (5.02,0.08)--(5.02,-0.08) node[below] {$m$};

  \node[left] at (1,4.05) {$m$};
  \node[left] at (1,3.25) {$m-1$};
  \node[left] at (1,0.82) {$1$};
  \node[] at (3.6,2.8) {$m/d$};
\end{tikzpicture}
\end{center}

Fix \(\ell\ge1\). Let
\[
 \mathcal U=(S_1,\ldots,S_m),
 \qquad
 \varnothing\ne S_j\subset[\ell],
 \qquad
 \bigcup_{j=1}^mS_j=[\ell].
\]
For \(S\subset[\ell]\), define
\[
 \pi_S:\mathbb{N}^\ell\longrightarrow\mathbb{N}^S,
 \qquad
 \pi_S((u_t)_{t=1}^{\ell})=(u_t)_{t\in S},
\]
and
\[
 \Pi_{\mathcal U}(u)
 =
 \bigl(\pi_{S_1}(u),\ldots,\pi_{S_m}(u)\bigr).
\]
The associated fractional Cartesian support is
\[
 \mathbb{N}^{\mathcal U}:=\Pi_{\mathcal U}(\mathbb{N}^\ell).
\]
After fixed identifications \(\mathbb{N}^{S_j}\simeq\mathbb{N}\), it is regarded
as a subset of \(\mathbb{N}^m\).

The incidence matrix of \(\mathcal U\) is
\[
 I_{\mathcal U}=[\iota_{t,j}]_{\ell\times m},
 \qquad
 \iota_{t,j}=
 \begin{cases}
 1,&t\in S_j,\\
 0,&t\notin S_j.
 \end{cases}
\]
A \emph{fractional edge cover} is a vector
\(w=(w_1,\ldots,w_m)\in[0,\infty)^m\) satisfying
\[
 \sum_{j:\,t\in S_j}w_j\ge1
 \qquad(t\in[\ell]).
\]
Its fractional edge-cover number is
\[
 \rho^*(\mathcal U)
 =
 \min\left\{
 \sum_{j=1}^m w_j:
 w\text{ is a fractional edge cover of }\mathcal U
 \right\}.
\]

\subsection{Fractional Cartesian models and combinatorial dimension}

{For fractional Cartesian products, combinatorial dimension is given by the associated linear program. This is the Blei--Schmerl formula \cite[Theorem~1]{BleiSchmerl}; see also \cite{Blei1979,Blei1984,BleiMemoir,BleiBook,BleiGao}.}\par
The linear-programming dual of the fractional edge-cover problem is
\begin{equation}\label{eq:rhoU-dual}
 \max\left\{\sum_{t=1}^{\ell}y_t:
 y\in[0,\infty)^\ell,\ 
 \sum_{t\in S_j}y_t\le1\quad(1\le j\le m)\right\}.
\end{equation}
{By \cite[Theorem~1]{BleiSchmerl}, in the linear-programming form \eqref{eq:rhoU-dual},} the combinatorial dimension of
$\mathbb{N}^{\mathcal U}$ is the value of the dual program
\eqref{eq:rhoU-dual}. Hence, by linear programming duality,
\[
 \dim(\mathbb{N}^{\mathcal U})=\rho^*(\mathcal U).
\]

{The uniform-cover inequality below is a finite discrete form of Finner's generalized H\"older inequality \cite{Finner1992}; compare Bollob\'as--Thomason \cite{BollobasThomason}.}\par
\begin{lemma}
\label{lem:uniform-cover-holder}
Let $\ell\in\mathbb{N}$ and let $T_1,\ldots,T_R\subset[\ell]$ satisfy
\begin{equation}\label{eq:uniform-q-cover}
 |\{r:t\in T_r\}|=q
 \qquad(t\in[\ell]),
\end{equation}
where $1\le q\le R$. For each $1\le r\le R$, let
$f_r:\mathbb{N}^{T_r}\to[0,\infty)$ be finitely supported. Then
\begin{equation}\label{eq:uniform-cover-holder}
 \sum_{u\in\mathbb{N}^\ell}
 \prod_{r=1}^R f_r(\pi_{T_r}(u))^{1/q}
 \le
 \prod_{r=1}^R
 \left(\sum_{v\in\mathbb{N}^{T_r}}f_r(v)\right)^{1/q}.
\end{equation}
\end{lemma}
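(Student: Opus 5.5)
We prove \eqref{eq:uniform-cover-holder} by induction on $\ell$, peeling off one coordinate at a time with the classical H\"older inequality. This is the standard proof of the Bollob\'as--Thomason/Finner inequality. All sums are finite because the $f_r$ are finitely supported, so there are no convergence issues.

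\emph{Base case $\ell=1$.} Every $T_r$ is either $\varnothing$ or $\{1\}$, and condition \eqref{eq:uniform-q-cover} forces exactly $q$ indices $r$ with $T_r=\{1\}$. For $T_r=\varnothing$, $f_r$ is a single number $f_r(\ast)\ge0$, and its factor $f_r(\ast)^{1/q}$ appears identically on both sides. It remains to bound
\[
\sum_{u\in\mathbb N}\prod_{r:\,T_r=\{1\}}f_r(u)^{1/q}.
\]
H\"older's inequality for $q$ functions with all exponents equal to $q$ bounds this by $\prod_{r:\,T_r=\{1\}}\bigl(\sum_u f_r(u)\bigr)^{1/q}$, which is the claim.

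\emph{Inductive step.} Assume the statement for $\ell-1$, with arbitrary $R$ and $q$. Fix the last coordinate $u_\ell$ and split $u=(u',u_\ell)$ with $u'\in\mathbb N^{\ell-1}$. Let $\mathcal R=\{r:\ell\in T_r\}$, so $|\mathcal R|=q$.

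For fixed $u_\ell$, define on $\mathbb N^{T_r\setminus\{\ell\}}$ the functions $g_r^{u_\ell}(v)=f_r(v,u_\ell)$ for $r\in\mathcal R$, and $g_r=f_r$ for $r\notin\mathcal R$. The sets $T_r\setminus\{\ell\}$ still cover each $t\in[\ell-1]$ exactly $q$ times. The induction hypothesis therefore gives
\[
\sum_{u'}\prod_r f_r(\pi_{T_r}(u',u_\ell))^{1/q}\le \prod_{r\notin\mathcal R}\|f_r\|_1^{1/q}\prod_{r\in\mathcal R}F_r(u_\ell)^{1/q},
\]
where $F_r(u_\ell)=\sum_{v}f_r(v,u_\ell)$.

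Now sum over $u_\ell$. The right-hand side involves exactly $q$ functions $F_r$, $r\in\mathcal R$, each raised to the power $1/q$. H\"older's inequality with $q$ equal exponents gives
\[
\sum_{u_\ell}\prod_{r\in\mathcal R}F_r(u_\ell)^{1/q}\le\prod_{r\in\mathcal R}\Bigl(\sum_{u_\ell}F_r(u_\ell)\Bigr)^{1/q}=\prod_{r\in\mathcal R}\|f_r\|_1^{1/q}.
\]
Combining this with the previous display yields \eqref{eq:uniform-cover-holder}.

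The only delicate points are bookkeeping. First, the sets $T_r$ may become empty after removing $\ell$; in that case the function is a constant and contributes its value trivially, which the induction hypothesis must allow. To cover this, we formulate the statement so that empty $T_r$ are permitted, with $\mathbb N^{\varnothing}$ a singleton. Second, the uniform multiplicity $q$ is exactly what makes the number of functions in each one-variable H\"older step equal to the common exponent $q$. No other obstacle is expected.
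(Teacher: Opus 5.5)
Your proof is correct and is essentially the paper's argument: induction on $\ell$, removing the last coordinate with the $q$-fold H\"older inequality, which the uniform multiplicity makes exactly balanced. The only difference is the order of the two steps. The paper first applies H\"older in $u_\ell$ for fixed $u'$ and then applies the induction hypothesis to the marginals $g_r(w)=\sum_s f_r(w,s)$, whereas you apply the induction hypothesis for fixed $u_\ell$ first and then H\"older to the last-coordinate marginals $F_r(u_\ell)$.
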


\begin{proof}
We argue by induction on $\ell$. For $\ell=1$, each $T_r$ is either
$\varnothing$ or $\{1\}$. Set
\[
 I_1:=\{r:1\in T_r\}.
\]
By \eqref{eq:uniform-q-cover}, $|I_1|=q$. If $r\notin I_1$, then
$f_r(\pi_{T_r}(u))=f_r(\varnothing)$ for every $u\in\mathbb{N}$. If
$r\in I_1$, then, under the natural identification
$\mathbb{N}^{\{1\}}\cong\mathbb{N}$, we have
$f_r(\pi_{T_r}(u))=f_r(u)$. H\"older's inequality therefore gives
\begin{align*}
 \sum_{u\in\mathbb{N}}\prod_{r=1}^R f_r(\pi_{T_r}(u))^{1/q}
 &=\left(\prod_{r\notin I_1}f_r(\varnothing)^{1/q}\right)
   \sum_{u\in\mathbb{N}}\prod_{r\in I_1}f_r(u)^{1/q}\\
 &\le
 \left(\prod_{r\notin I_1}f_r(\varnothing)^{1/q}\right)
 \prod_{r\in I_1}
 \left(\sum_{u\in\mathbb{N}}f_r(u)\right)^{1/q}\\
 &=\prod_{r=1}^R
 \left(\sum_{v\in\mathbb{N}^{T_r}}f_r(v)\right)^{1/q}.
\end{align*}

Assume now that the result holds for $\ell-1$ coordinates, and set
\[
 I_\ell:=\{r:\ell\in T_r\}.
\]
Again \eqref{eq:uniform-q-cover} gives $|I_\ell|=q$. Fix
$u'=(u_1,\ldots,u_{\ell-1})$. If $r\notin I_\ell$, then
$T_r\subset[\ell-1]$ and
\[
 f_r\bigl(\pi_{T_r}(u',u_\ell)\bigr)=f_r(\pi_{T_r}(u')).
\]
Applying H\"older's inequality to the $q$ factors indexed by $I_\ell$, we
obtain
\begin{align}
 &\sum_{u_\ell\in\mathbb{N}}
 \prod_{r=1}^R f_r\bigl(\pi_{T_r}(u',u_\ell)\bigr)^{1/q}
 \notag\\
 &\quad\le
 \prod_{r\notin I_\ell}f_r(\pi_{T_r}(u'))^{1/q}
 \prod_{r\in I_\ell}
 \left(\sum_{u_\ell\in\mathbb{N}}
 f_r\bigl(\pi_{T_r}(u',u_\ell)\bigr)\right)^{1/q}.
 \label{eq:last-coordinate-holder}
\end{align}
For each $r$, put $T'_r:=T_r\setminus\{\ell\}$ and define
$g_r:\mathbb{N}^{T'_r}\to[0,\infty)$ by
\begin{equation}\label{eq:def-gr}
 g_r(w):=
 \begin{cases}
  f_r(w),&\ell\notin T_r,\\[1mm]
  \displaystyle\sum_{s\in\mathbb{N}}f_r(w,s),&\ell\in T_r.
 \end{cases}
\end{equation}
Since $\ell$ is the largest coordinate, the notation $(w,s)$ respects the
ordering in $\mathbb{N}^{T_r}$. Each $g_r$ is finitely supported, and the
right-hand side of \eqref{eq:last-coordinate-holder} is
\[
 \prod_{r=1}^R g_r(\pi_{T'_r}(u'))^{1/q}.
\]
Summing over $u'\in\mathbb{N}^{\ell-1}$ gives
\[
 \sum_{u\in\mathbb{N}^\ell}\prod_{r=1}^R f_r(\pi_{T_r}(u))^{1/q}
 \le
 \sum_{u'\in\mathbb{N}^{\ell-1}}
 \prod_{r=1}^R g_r(\pi_{T'_r}(u'))^{1/q}.
\]
The family $T'_1,\ldots,T'_R$ covers each coordinate of $[\ell-1]$ exactly
$q$ times, since $t\in T'_r$ if and only if $t\in T_r$ for $t<\ell$. The
induction hypothesis therefore yields
\begin{equation}\label{eq:induction-gr}
 \sum_{u'\in\mathbb{N}^{\ell-1}}
 \prod_{r=1}^R g_r(\pi_{T'_r}(u'))^{1/q}
 \le
 \prod_{r=1}^R
 \left(\sum_{w\in\mathbb{N}^{T'_r}}g_r(w)\right)^{1/q}.
\end{equation}
Finally, for every $r$,
\begin{equation}\label{eq:mass-preservation}
 \sum_{w\in\mathbb{N}^{T'_r}}g_r(w)
 =\sum_{v\in\mathbb{N}^{T_r}}f_r(v).
\end{equation}
If $\ell\notin T_r$, this is immediate. If $\ell\in T_r$, it follows by
summing the second line of \eqref{eq:def-gr} over $w$. Combining
\eqref{eq:induction-gr} and \eqref{eq:mass-preservation} proves
\eqref{eq:uniform-cover-holder}.
\end{proof}

{The corresponding fractional-cover estimate is the AGM bound for joins \cite{AGM}; it also follows from Finner's generalized H\"older inequality \cite{Finner1992}. In the present notation it is an immediate consequence of Lemma~\ref{lem:uniform-cover-holder}.}\par
\begin{lemma}
\label{lem:fractional-edge-cover-finite-joins}
Let $\mathcal U=(S_1,\ldots,S_m)$ be a family of subsets of $[\ell]$ whose union is $[\ell]$. For each $1\le j\le m$, let
$A_j\subset\mathbb{N}^{S_j}$ be finite, and define
\[
 J(A_1,\ldots,A_m):=
 \{u\in\mathbb{N}^\ell:\pi_{S_j}(u)\in A_j\text{ for }1\le j\le m\}.
\]
If $w=(w_1,\ldots,w_m)\in[0,\infty)^m$ is a fractional edge cover of
$\mathcal U$, then
\begin{equation}\label{eq:fractional-edge-cover-join}
 |J(A_1,\ldots,A_m)|\le\prod_{j=1}^m|A_j|^{w_j}.
\end{equation}
Consequently,
\begin{equation}\label{eq:fractional-edge-cover-support}
 |(A_1\times\cdots\times A_m)\cap\mathbb{N}^{\mathcal U}|
 \le\prod_{j=1}^m|A_j|^{w_j}.
\end{equation}
In particular, if $|A_j|\le n$ for every $j$, then
\[
 |(A_1\times\cdots\times A_m)\cap\mathbb{N}^{\mathcal U}|
 \le n^{\sum_{j=1}^m w_j}.
\]
{Throughout, $0^0:=1$.}
\end{lemma}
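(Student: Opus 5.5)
The plan is to deduce \eqref{eq:fractional-edge-cover-join} from Lemma~\ref{lem:uniform-cover-holder} applied to indicator functions. The first step is a reduction to rational weights with a uniform cover. Since shrinking each $w_j$ toward the minimum needed is not obviously possible, we instead argue by approximation. If some $A_j$ is empty, then $J=\varnothing$ and there is nothing to prove. Otherwise $|A_j|\ge1$, so the right-hand side $\prod_j|A_j|^{w_j}$ is nondecreasing in each $w_j$ and continuous. It therefore suffices to treat rational covers $w_j=k_j/q$ with $k_j\in\mathbb N_0$ and $q\in\mathbb N$, because any real cover can be approximated from above by rational ones that are still covers.

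For such a cover, form a list $T_1,\ldots,T_R$ by taking $S_j$ exactly $k_j$ times. Each coordinate $t$ is then covered $\sum_{j:t\in S_j}k_j\ge q$ times. The lemma requires exactly $q$ coverings, so we pad the excess rather than delete. Let $c_t:=\sum_{j\ni t}k_j$ and set $Q:=\max_t c_t$. For each $t$, add $Q-c_t$ singleton sets $\{t\}$, each carrying the constant-one function $f\equiv 1$ on $\mathbb N^{\{t\}}$. That function is not finitely supported, so we instead let it be the indicator of the finite set $\pi_{\{t\}}(J)$, which is finite because every $u\in J$ has $\pi_{S_j}(u)\in A_j$ for some $j\ni t$. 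This family now covers every coordinate exactly $Q$ times.

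We then apply Lemma~\ref{lem:uniform-cover-holder} with exponent $1/Q$. Each copy of $S_j$ carries $f=\mathbf 1_{A_j}$, and each padding singleton carries $\mathbf 1_{\pi_t(J)}$. The left-hand side is at least $|J|$, which gives
\[
|J|\le\prod_j|A_j|^{k_j/Q}\prod_t|\pi_t(J)|^{(Q-c_t)/Q}.
\]
The padding factor spoils the bound, since $Q\ne q$ in general. This is the main obstacle. The fix is to avoid padding altogether and adjust the weights instead: replace $w$ by a cover $w'\le w$ that is still a cover. A uniform cover from rational multiplicities, however, generally needs duplicated singletons, and singletons are not among the $S_j$.

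A cleaner route is to prove the weighted Finner form directly by the same induction on $\ell$ as in Lemma~\ref{lem:uniform-cover-holder}, namely
\[
\sum_u\prod_j f_j(\pi_{S_j}u)^{w_j}\le\prod_j\Bigl(\sum f_j\Bigr)^{w_j}\qquad\text{whenever }\sum_{j\ni t}w_j\ge1.
\]
In the last-coordinate step, the functions $f_j$ with $\ell\in S_j$ have exponents summing to at least $1$. Since $0\le f_j\le1$ holds for indicators (and we may normalize in general by reducing to $\sum_{j\ni\ell}w_j=1$ for indicators), each exponent can be lowered to total exactly $1$: for indicators $x^{w}\le x^{w'}$ when $w\ge w'$. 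Hölder with exponents $1/w_j$ then applies, and summing produces $g_j$ exactly as in \eqref{eq:def-gr}. This induction works directly for indicators, with $g_j$ integer-valued rather than $\le1$. To handle that, it is better to carry out the induction with exponents exactly summing to $1$ at each coordinate in the first place. Lowering is done once at the start: for indicators, replace $w$ by a minimal $w'\le w$ for which a general-function Finner inequality holds, which is the exact-cover case.

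The consequence \eqref{eq:fractional-edge-cover-support} follows because $\Pi_{\mathcal U}$ maps $J(A_1,\ldots,A_m)$ onto $(A_1\times\cdots\times A_m)\cap\mathbb N^{\mathcal U}$. The final bound follows from $|A_j|\le n$.
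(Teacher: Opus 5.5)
Your padding attempt and your weight-lowering attempt fail for the reasons you give yourself, and the final ``cleaner route'' breaks at the same point. Its decisive step is to replace $w$ by a minimal cover $w'\le w$ and treat this as the exact-cover case $\sum_{j\ni t}w'_j=1$ for all $t$. Minimal covers need not be exact, and exact covers need not exist at all. For $\ell=3$, $S_1=\{1,2\}$, $S_2=\{1,3\}$, vertices $2$ and $3$ force $w_1\ge1$ and $w_2\ge1$, so vertex $1$ carries weight at least $2$ under every fractional edge cover. Without exactness, the surplus must in general be absorbed at a coordinate where the functions are already the integer-valued $g_j$ of \eqref{eq:def-gr}. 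There the only tool you offer, the pointwise bound $x^{w}\le x^{w'}$ for $x\in[0,1]$, is unavailable, as you noticed. So the proposal does not prove \eqref{eq:fractional-edge-cover-join} for covers with surplus, and that is the general case.

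The missing idea in the paper's proof is to trim rather than pad. Lemma~\ref{lem:uniform-cover-holder} does not require the $T_r$ to be among the $S_j$; any subsets of $[\ell]$, possibly empty, covering each vertex exactly $q$ times will do. With $w_j=k_j/q$, take $k_j$ copies of $S_j$. Each vertex $t$ lies in $c_t\ge q$ copies; delete $t$ from $c_t-q$ of them. The resulting sets $T_{j,s}\subset S_j$ form an exact $q$-fold cover. Since $\pi_{T_{j,s}}$ factors through $\pi_{S_j}$, one has $|\pi_{T_{j,s}}(J)|\le|\pi_{S_j}(J)|\le|A_j|$. Applying the lemma with $f_{j,s}=\mathbf 1_{\pi_{T_{j,s}}(J)}$ gives $|J|^q\le\prod_j|A_j|^{k_j}$ with no parasitic factor, and your approximation from above then handles real weights.

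Alternatively, your weighted inequality is true for arbitrary nonnegative $f_j$ and covers with surplus. However, the H\"older step must be justified by monotonicity of $\ell_p$-norms for counting measure, not by a pointwise bound. Choose $\lambda_j\le w_j$ with $\sum_{j\ni\ell}\lambda_j=1$, apply H\"older to the functions $f_j^{w_j}$ with exponents $1/\lambda_j$, and then use $\|h\|_{\ell_{1/\lambda_j}}\le\|h\|_{\ell_{1/w_j}}$. This keeps the weights $w_j$ unchanged through the induction; it is exactly the argument of Proposition~\ref{prop:fractional-cartesian-product-region}.
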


\begin{proof}
Let
$T_1,\ldots,T_R\subset[\ell]$ cover every $t\in[\ell]$ exactly $q$ times,
and let $E\subset\mathbb{N}^\ell$ be finite. Taking
$f_r:=\mathbf1_{\pi_{T_r}(E)}$, we have
\[
 \prod_{r=1}^R
 \mathbf1_{\pi_{T_r}(E)}(\pi_{T_r}(u))=1
 \qquad(u\in E).
\]
Since this product takes only the values $0$ and $1$, we apply
Lemma~\ref{lem:uniform-cover-holder} with $f_r=\mathbf1_{\pi_{T_r}(E)}$. Its H\"older-type inequality gives
\begin{align*}
 |E|
 &\le
 \sum_{u\in\mathbb{N}^\ell}
 \prod_{r=1}^R
 \mathbf1_{\pi_{T_r}(E)}(\pi_{T_r}(u))^{1/q}\\
 &\le
 \prod_{r=1}^R
 \left(\sum_{v\in\mathbb{N}^{T_r}}
 \mathbf1_{\pi_{T_r}(E)}(v)\right)^{1/q}
 =\prod_{r=1}^R|\pi_{T_r}(E)|^{1/q}.
\end{align*}
Thus
\begin{equation}\label{eq:uniform-cover-cardinality}
 |E|^q\le\prod_{r=1}^R|\pi_{T_r}(E)|.
\end{equation}

If $J(A_1,\ldots,A_m)$ is empty, then
\eqref{eq:fractional-edge-cover-join} is immediate. We may therefore assume
that this join is nonempty. In particular, every $A_j$ is nonempty. Since the
sets $S_j$ cover $[\ell]$ and every $A_j$ is finite, the join is finite.

Suppose first that the weights $w_j$ are rational. Choose a common
denominator and write
\[
 w_j=\frac{a_j}{Q},
 \qquad
 a_j\in\mathbb{N}\cup\{0\},\quad Q\in\mathbb{N}.
\]
The fractional-cover condition becomes
\begin{equation}\label{eq:rational-cover-multiplicity}
 \sum_{j:\,t\in S_j}a_j\ge Q
 \qquad(t\in[\ell]).
\end{equation}
Repeat each edge $S_j$ exactly $a_j$ times, denoting its copies by
$S_{j,1},\ldots,S_{j,a_j}$. For each $t\in[\ell]$, condition
\eqref{eq:rational-cover-multiplicity} allows us to choose exactly $Q$ of the
copies containing $t$. For every pair $(j,s)$, let
$T_{j,s}\subset S_j$ be the set of vertices that selected the copy
$S_{j,s}$. By construction,
\[
 |\{(j,s):t\in T_{j,s}\}|=Q
 \qquad(t\in[\ell]).
\]
Thus $(T_{j,s})$ is a uniform cover of multiplicity $Q$. Applying
\eqref{eq:uniform-cover-cardinality} to $E=J(A_1,\ldots,A_m)$, we obtain
\[
 |J(A_1,\ldots,A_m)|^Q
 \le
 \prod_{j=1}^m\prod_{s=1}^{a_j}
 |\pi_{T_{j,s}}(J(A_1,\ldots,A_m))|.
\]
Since $T_{j,s}\subset S_j$ and
$\pi_{S_j}(J(A_1,\ldots,A_m))\subset A_j$, we have
\[
 |\pi_{T_{j,s}}(J(A_1,\ldots,A_m))|\le |A_j|.
\]
Consequently,
\[
 |J(A_1,\ldots,A_m)|^Q\le\prod_{j=1}^m|A_j|^{a_j},
\]
and taking the $Q$th root proves
\eqref{eq:fractional-edge-cover-join} for rational weights.

Let the weights now be nonnegative real numbers. To avoid confusing this approximation index with the box size used
elsewhere, let $M\in\mathbb{N}$ and set
\[
 w_j^{(M)}:=\frac{\lceil Mw_j\rceil}{M}.
\]
Then $w_j^{(M)}\ge w_j$, $w_j^{(M)}\to w_j$, and
\[
 \sum_{j:\,t\in S_j}w_j^{(M)}
 \ge\sum_{j:\,t\in S_j}w_j\ge1.
\]
Thus $w^{(M)}$ is a rational fractional cover. By the case already proved,
\[
 |J(A_1,\ldots,A_m)|
 \le\prod_{j=1}^m|A_j|^{w_j^{(M)}}.
\]
Letting $M\to\infty$ proves \eqref{eq:fractional-edge-cover-join}.
Finally, the injective map $\Pi_{\mathcal U}$ gives a bijection between
\[
 J(A_1,\ldots,A_m) \qquad \text{and} \qquad (A_1\times\cdots\times A_m)\cap\mathbb{N}^{\mathcal U},
\]
which proves \eqref{eq:fractional-edge-cover-support}. The final assertion
follows from $|A_j|\le n$.
\end{proof}

{The Blei--Schmerl formula gives $\dim(\mathbb{N}^{\mathcal U})=\rho^*(\mathcal U)$ \cite[Theorem~1]{BleiSchmerl}. Its finite-box form is the following.}
\begin{theorem}[Blei--Schmerl dimension formula]
\label{thm:bayart-general-dim}
Let $\mathcal U=(S_1,\ldots,S_m)$ be a family of subsets of $[\ell]$ whose union is $[\ell]$. Then
\[
 \dim(\mathbb{N}^{\mathcal U})=\rho^*(\mathcal U).
\]
More precisely, there exists $C\ge1$ such that
\[
 C^{-1}n^{\rho^*(\mathcal U)}
 \le \mathbb{N}^{\mathcal U}(n)
 \le Cn^{\rho^*(\mathcal U)}
 \qquad(n\ge2).
\]
\end{theorem}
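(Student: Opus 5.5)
The plan is to prove the two inequalities separately; the dimension formula then follows from the definition of $\dim$ as a $\limsup$ of $\log\Lambda(n)/\log n$.

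For the upper bound, take any box $A_1\times\cdots\times A_m$ with $|A_j|\le n$, where $A_j\subset\mathbb{N}^{S_j}$ under the fixed identifications $\mathbb{N}^{S_j}\simeq\mathbb{N}$. Choose an optimal fractional edge cover $w$ with $\sum_j w_j=\rho^*(\mathcal U)$; the minimum exists because the feasible region is a nonempty closed polyhedron and the objective is bounded below. The final assertion of Lemma~\ref{lem:fractional-edge-cover-finite-joins} then gives $\mathbb{N}^{\mathcal U}(n)\le n^{\rho^*(\mathcal U)}$. So the upper estimate holds with $C=1$.

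For the lower bound, I will use the dual program \eqref{eq:rhoU-dual}. By LP duality and compactness of the dual feasible region (each $y_t$ lies in some $S_j$, so $y_t\le1$), there is an optimal dual vector $y\in[0,1]^\ell$ with $\sum_{t\in S_j}y_t\le1$ for every $j$ and $\sum_t y_t=\rho^*(\mathcal U)$. Given $n\ge2$, set $N_t:=\lfloor n^{y_t}\rfloor\ge1$ and $B:=\prod_{t=1}^{\ell}[N_t]\subset\mathbb{N}^\ell$. Let $A_j:=\pi_{S_j}(B)$, regarded as a subset of $\mathbb{N}$ via the identification. Then
\[
 |A_j|=\prod_{t\in S_j}N_t\le n^{\sum_{t\in S_j}y_t}\le n .
\]
Every $u\in B$ satisfies $\Pi_{\mathcal U}(u)\in(A_1\times\cdots\times A_m)\cap\mathbb{N}^{\mathcal U}$. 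Since the $S_j$ cover $[\ell]$, the map $\Pi_{\mathcal U}$ is injective, and therefore
\[
 \mathbb{N}^{\mathcal U}(n)\ge|B|=\prod_t\lfloor n^{y_t}\rfloor .
\]
Using $\lfloor x\rfloor\ge x/2$ for $x\ge1$, we get $|B|\ge 2^{-\ell}n^{\sum_t y_t}=2^{-\ell}n^{\rho^*(\mathcal U)}$. This gives the lower estimate with $C=2^{\ell}$.

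Combining the two bounds, $\log\mathbb{N}^{\mathcal U}(n)/\log n\to\rho^*(\mathcal U)$, so $\dim(\mathbb{N}^{\mathcal U})=\rho^*(\mathcal U)$, and the dimension is in fact exact. The only point needing care is the LP duality step, namely that the primal and dual optima coincide and are attained. This is standard finite-dimensional LP duality: both programs are feasible, with $w=(1,\ldots,1)$ and $y=0$. The remaining steps are routine bookkeeping with the identifications $\mathbb{N}^{S_j}\simeq\mathbb{N}$, which are injective and so preserve cardinalities.
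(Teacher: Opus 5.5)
Your proposal is correct and follows the paper's proof essentially step for step. The upper bound applies the final assertion of Lemma~\ref{lem:fractional-edge-cover-finite-joins} to an optimal fractional edge cover. The lower bound takes an optimal dual vector $y$, the product box $\prod_t[\lfloor n^{y_t}\rfloor]$, and the injectivity of $\Pi_{\mathcal U}$, giving $C=2^{\ell}$. The only difference is that you invoke LP duality explicitly to identify the dual optimum with $\rho^*(\mathcal U)$, a step the paper uses without comment.
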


\begin{proof}
Fix sets $A_j\subset\mathbb{N}^{S_j}$ with $|A_j|\le n$. For every fractional
edge cover $w=(w_1,\ldots,w_m)$ of $\mathcal U$,
Lemma~\ref{lem:fractional-edge-cover-finite-joins} gives
\[
 |(A_1\times\cdots\times A_m)\cap\mathbb{N}^{\mathcal U}|
 \le\prod_{j=1}^m|A_j|^{w_j}
 \le n^{\sum_jw_j}.
\]
Minimizing over $w$, we obtain
\[
 \mathbb{N}^{\mathcal U}(n)\le n^{\rho^*(\mathcal U)}.
\]

For the reverse estimate, let $y=(y_t)_{t=1}^\ell$ be an optimal solution of
\eqref{eq:rhoU-dual}. For $n\ge2$, choose finite sets
$B_t\subset\mathbb{N}$ with
\[
 |B_t|=\lfloor n^{y_t}\rfloor.
\]
Then $|B_t|\ge n^{y_t}/2$. Define
\[
 A_j:=\prod_{t\in S_j}B_t\subset\mathbb{N}^{S_j}.
\]
By dual feasibility,
\[
 |A_j|=\prod_{t\in S_j}|B_t|
 \le n^{\sum_{t\in S_j}y_t}\le n.
\]
Let $B:=\prod_{t=1}^\ell B_t\subset\mathbb{N}^\ell$. For every $u\in B$,
$\pi_{S_j}(u)\in A_j$, and therefore
\[
 \Pi_{\mathcal U}(B)
 \subset
 (A_1\times\cdots\times A_m)\cap\mathbb{N}^{\mathcal U}.
\]
The injectivity of $\Pi_{\mathcal U}$ now gives
\begin{align*}
 |(A_1\times\cdots\times A_m)\cap\mathbb{N}^{\mathcal U}|
 &\ge |\Pi_{\mathcal U}(B)|
 =|B|
 =\prod_{t=1}^\ell|B_t|\\
 &\ge 2^{-\ell}n^{\sum_ty_t}
 =2^{-\ell}n^{\rho^*(\mathcal U)}.
\end{align*}
Thus one may take $C=2^\ell$, and the two estimates prove the result.
\end{proof}

For a uniformly incident family, the formula takes the form used in \cite{BayartJEMS}.
Suppose that every $S_j$ has cardinality $k$ and that each $t\in[\ell]$ belongs to exactly $h$ of the sets $S_j$. Counting incidences gives
\begin{equation}\label{eq:incidence-count}
 mk=\ell h.
\end{equation}
The constant primal choice $w_j=1/h$ and constant dual choice $y_t=1/k$ are
feasible. Hence
\begin{equation}\label{eq:uniform-rho}
 \rho^*(\mathcal U)=\frac mh=\frac\ell k.
\end{equation}
This is the dimension formula in \cite[Section~4.3]{BayartJEMS}; in the
uniformly incident case, it also follows from \cite[Chapter~XIII, Theorem~14
and Corollary~16]{BleiBook}.

\subsection{Product estimates on fractional Cartesian supports}

The incidence matrix also determines the exact analytic region for the
associated product inequality. For $1\le j\le m$, let
\[
 a^{(j)}=(a^{(j)}_\nu)_{\nu\in\mathbb{N}^{S_j}}
\]
be a finitely supported nonnegative family. Given
$\mathbf p=(p_1,\ldots,p_m)\in[1,\infty]^m$, with $1/\infty:=0$,
the fractional-cover condition also characterizes the product inequality.
This is a discrete coordinate-projection form of Finner's generalized
H\"older inequality \cite{Finner1992}; compare
\cite[Chapter~XIII]{BleiBook} and \cite[Section~4.3]{BayartJEMS}.

\begin{proposition}
\label{prop:fractional-cartesian-product-region}
The estimate
\begin{equation}\label{eq:finner-product}
 \sum_{u\in\mathbb{N}^\ell}
 \prod_{j=1}^m a^{(j)}_{\pi_{S_j}(u)}
 \le
 \prod_{j=1}^m
 \|a^{(j)}\|_{\ell_{p_j}(\mathbb{N}^{S_j})}
\end{equation}
holds for all finitely supported nonnegative families $a^{(j)}$ if and only if
\begin{equation}\label{eq:product-region}
 \sum_{j:\,t\in S_j}\frac1{p_j}\ge1
 \qquad(t\in[\ell]).
\end{equation}
Equivalently, $(1/p_1,\ldots,1/p_m)$ is a fractional edge cover of
$\mathcal U$.
\end{proposition}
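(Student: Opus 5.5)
Both directions of Proposition~\ref{prop:fractional-cartesian-product-region} are proved separately.

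\emph{Sufficiency.} Assume \eqref{eq:product-region}. First reduce to the case of equality at every vertex. One way is to lower the exponents: replacing $1/p_j$ by smaller values $1/\tilde p_j\le 1/p_j$ only strengthens the right-hand side, since $\|a\|_{\ell_{\tilde p}}\le\|a\|_{\ell_p}$ when $\tilde p\ge p$. This lowering cannot in general make every vertex sum exactly $1$, though, because the edges share vertices. The cleaner route is a weighted Finner/H\"older inequality, proved in the same way as Lemma~\ref{lem:uniform-cover-holder}. Normalize $\|a^{(j)}\|_{p_j}=1$ and set $f_j:=(a^{(j)})^{p_j}$, so that $\sum f_j=1$ and the integrand becomes $\prod_j f_j(\pi_{S_j}u)^{1/p_j}$.

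For rational exponents, write $1/p_j=b_j/Q$ and repeat the edge $S_j$ $b_j$ times. This turns the integrand into $\prod_{r} f_r(\pi_{S_r}u)^{1/Q}$ over a family in which each vertex is covered at least $Q$ times. Then, as in the proof of Lemma~\ref{lem:fractional-edge-cover-finite-joins}, each vertex selects exactly $Q$ of the copies containing it, which produces subsets $T_r\subset S_r$. Define $g_r(v):=\max_{\pi_{T_r}(x)=v}f_r(x)$, or better $g_r(v):=\sum_{\pi_{T_r}(x)=v}f_r(x)$. Then $f_r(\pi_{S_r}u)\le g_r(\pi_{T_r}u)$ and $\sum g_r=\sum f_r=1$. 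Lemma~\ref{lem:uniform-cover-holder} now gives a bound of $1$, which is the claim.

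For real $p_j$, the rational approximation must go in the correct direction: choose rationals $1/\tilde p_j\ge1/p_j$ converging to $1/p_j$. Coverage is preserved, but the norms become $\ell_{\tilde p_j}$ with $\tilde p_j\le p_j$, which is the wrong direction. Instead, handle the irrational case by the sum-and-projection argument directly. The induction of Lemma~\ref{lem:uniform-cover-holder} extends verbatim to general weights $\alpha_r$ with $\sum_{r\ni t}\alpha_r\ge1$: at the last coordinate, apply H\"older with exponents $\alpha_r$ summing to at least $1$ on counting measure, using $\|\cdot\|_{\ell_{1/\alpha}}\le\|\cdot\|_{\ell_{1/\beta}}$ for $\alpha\ge\beta$ to absorb any excess. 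Before doing so, truncate by passing to $g_r$ so that the coverage at each vertex is exactly $1$. This truncation is the step that needs care: for irrational weights it means splitting an edge into two copies with weights summing to the original, with the vertex assigned to only one of them. I expect this direct induction to be the cleanest route, and the exact-cover reduction to be the main technical point.

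\emph{Necessity.} Suppose \eqref{eq:finner-product} holds but $\sum_{j\ni t_0}1/p_j<1$ for some $t_0$. Fix all other coordinates at one value and let coordinate $t_0$ range over $[N]$. Take $a^{(j)}$ to be the indicator of the projection of this line. For $j\ni t_0$ this projection has $N$ points, so $\|a^{(j)}\|_{p_j}=N^{1/p_j}$; for $j\not\ni t_0$ it is a single point, with norm $1$. The left-hand side then equals $N$, while the right-hand side equals $N^{\sum_{j\ni t_0}1/p_j}$, and letting $N\to\infty$ gives a contradiction. The equivalence with $(1/p_j)$ being a fractional edge cover is just the definition.
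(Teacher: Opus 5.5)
Your necessity argument is exactly the paper's. Your rational-exponent sufficiency argument is also correct. You repeat $S_j$ $b_j$ times, let each vertex select exactly $Q$ copies, and replace $f_r$ by its marginal sum $g_r$ on $\mathbb{N}^{T_r}$. This gives $f_r(\pi_{S_r}u)\le g_r(\pi_{T_r}u)$ with $\sum g_r=1$, and Lemma~\ref{lem:uniform-cover-holder} closes it.

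\textbf{The gap is the passage to real exponents, which you leave unfinished.}

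First, you discard rational approximation for the wrong reason. Set $\theta_j:=1/p_j>0$ and $\theta_j^{(M)}:=\lceil M\theta_j\rceil/M$, so that $\theta_j^{(M)}\ge\theta_j$ and $\theta_j^{(M)}\to\theta_j$. The cover condition is preserved, and your rational case gives the bound with $\|a^{(j)}\|_{\ell_{1/\theta_j^{(M)}}}$. Because $a^{(j)}$ is finitely supported, $q\mapsto\|a^{(j)}\|_{\ell_q}$ is continuous, so letting $M\to\infty$ yields \eqref{eq:finner-product}. No stage-by-stage monotone comparison is needed; this is the same limiting device used in the proof of Lemma~\ref{lem:fractional-edge-cover-finite-joins}.

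Second, in your ``direct induction'' the key inequality is stated backwards. If $\alpha\ge\beta$, then $\|\cdot\|_{\ell_{1/\beta}}\le\|\cdot\|_{\ell_{1/\alpha}}$, not the reverse. Moreover, the exact-cover truncation you call the main technical point is never needed. The paper's proof handles the excess locally:
\begin{itemize}
\item at the last coordinate, choose $0\le\lambda_j\le\theta_j$ with $\sum_{j\in I_\ell}\lambda_j=1$;
\item apply H\"older in $u_\ell$ with exponents $1/\lambda_j$;
\item bound $\|\cdot\|_{\ell_{1/\lambda_j}}\le\|\cdot\|_{\ell_{1/\theta_j}}$;
\item replace $a^{(j)}$ by its partial $\ell_{p_j}$-norm over the fiber.
\end{itemize}
The last step preserves $\|a^{(j)}\|_{\ell_{p_j}}$ and leaves every weight $\theta_j$ unchanged on the remaining vertices. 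So the induction on $\ell$ closes with the excess absorbed vertex by vertex, and no global splitting of edges ever occurs.

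You also never treat $p_j=\infty$, where $f_j=(a^{(j)})^{p_j}$ is meaningless. As in the paper, bound such factors by $\|a^{(j)}\|_{\ell_\infty}$ and drop them before normalizing. Since $1/p_j=0$, they contribute nothing to \eqref{eq:product-region}, so the remaining sets still satisfy it.

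With these repairs, your route (rational case via Lemma~\ref{lem:uniform-cover-holder}, then a limit) is a valid alternative to the paper's one-pass induction. It reuses the repetition-and-selection device that the paper employs only for cardinality bounds, at the cost of an approximation step that the direct induction avoids.
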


\begin{proof}
For sufficiency, put $\theta_j:=1/p_j$. If $\theta_j=0$, the corresponding
factor can be bounded pointwise by $\|a^{(j)}\|_\infty$ and removed from the
product. Since such a weight does not contribute to
\eqref{eq:product-region}, we may assume that $\theta_j>0$ for every $j$.

We prove by induction on $\ell$ the slightly more general assertion in which
empty sets $S_j$ are allowed. For $\ell=0$, the claim is immediate because
$\mathbb{N}^\varnothing$ has one element. Suppose $\ell\ge1$ and that the
result holds for $\ell-1$ coordinates. Set
\[
 I_\ell:=\{j:\ell\in S_j\}.
\]
Condition \eqref{eq:product-region} gives
$\sum_{j\in I_\ell}\theta_j\ge1$. Choose numbers
$0\le\lambda_j\le\theta_j$, $j\in I_\ell$, such that
$\sum_{j\in I_\ell}\lambda_j=1$. For fixed
$u'\in\mathbb{N}^{\ell-1}$, H\"older's inequality and monotonicity of sequence
norms yield
\begin{align*}
 &\sum_{u_\ell\in\mathbb{N}}
 \prod_{j\in I_\ell}a^{(j)}_{\pi_{S_j}(u',u_\ell)}\\
 &\quad\le
 \prod_{j\in I_\ell}
 \left\|
 \bigl(a^{(j)}_{\pi_{S_j}(u',s)}\bigr)_{s\in\mathbb{N}}
 \right\|_{\ell_{1/\lambda_j}}\\
 &\quad\le
 \prod_{j\in I_\ell}
 \left(\sum_{s\in\mathbb{N}}
 \bigl(a^{(j)}_{\pi_{S_j}(u',s)}\bigr)^{1/\theta_j}
 \right)^{\theta_j},
\end{align*}
where the factor corresponding to $\lambda_j=0$ is interpreted as a
supremum norm.

Put $S'_j:=S_j\setminus\{\ell\}$ and define
$g_j:\mathbb{N}^{S'_j}\to[0,\infty)$ by
\[
 g_j(w):=
 \begin{cases}
  a_w^{(j)},&j\notin I_\ell,\\[1mm]
  \displaystyle
  \left(\sum_{s\in\mathbb{N}}
  \bigl(a_{(w,s)}^{(j)}\bigr)^{1/\theta_j}\right)^{\theta_j},
  &j\in I_\ell.
 \end{cases}
\]
Summing first in $u_\ell$ and applying the one-variable H\"older estimate displayed immediately before the definition of $g_j$ gives
\begin{align*}
 \sum_{u\in\mathbb{N}^\ell}
 \prod_{j=1}^m a^{(j)}_{\pi_{S_j}(u)}
 &\le
 \sum_{u'\in\mathbb{N}^{\ell-1}}
 \left(\prod_{j\notin I_\ell}a^{(j)}_{\pi_{S_j}(u')}\right)
 \prod_{j\in I_\ell}
 \left(\sum_{s\in\mathbb{N}}
 \bigl(a^{(j)}_{\pi_{S_j}(u',s)}\bigr)^{1/\theta_j}
 \right)^{\theta_j}\\
 &=\sum_{u'\in\mathbb{N}^{\ell-1}}
 \prod_{j=1}^m g_j(\pi_{S'_j}(u')).
\end{align*}
For each $t<\ell$, the family $(S'_j)_{j=1}^m$ still satisfies
$\sum_{j:\,t\in S'_j}\theta_j\ge1$. Applying the induction hypothesis and
using
\[
 \|g_j\|_{\ell_{1/\theta_j}}
 =\|a^{(j)}\|_{\ell_{1/\theta_j}}
 \qquad(1\le j\le m)
\]
proves \eqref{eq:finner-product}.

For necessity, fix $t\in[\ell]$. Let $u_t$ range over
$[N]$ and fix all other coordinates at $1$. If $t\in S_j$, take
$a^{(j)}$ to be the indicator function of the resulting $N$ projected
points; if $t\notin S_j$, take the indicator function of the single fixed
point. The left-hand side of \eqref{eq:finner-product} equals $N$, whereas
the right-hand side equals
\[
 N^{\sum_{j:\,t\in S_j}1/p_j}.
\]
Letting $N\to\infty$ yields \eqref{eq:product-region}.
\end{proof}

For the isotropic specialization $p_1=\cdots=p_m=p$, define the incidence degree
\[
 {
\deg_{\mathcal U}:[\ell]\longrightarrow\{1,\ldots,m\},\qquad
\deg_{\mathcal U}(t):=|\{j:t\in S_j\}|.
}
\]
Proposition~\ref{prop:fractional-cartesian-product-region} shows that the
largest admissible isotropic exponent is
\begin{equation}\label{eq:pmax-U}
 p_{\max}(\mathcal U)=\min_{1\le t\le\ell}\deg_{\mathcal U}(t).
\end{equation}
In the uniformly $h$-incident case, this reduces to
$p_{\max}(\mathcal U)=h$, which is the product estimate used in \cite[Lemma~4.7]{BayartJEMS}.

\subsection{The two upper bounds}\label{sec:upper-bounds}

\subsubsection*{The dimensional obstruction}

The product estimate has two elementary consequences. The first depends only
on combinatorial dimension and follows by testing the product inequality on characteristic functions.

\begin{proposition}\label{prop:gamma-prod-upper}
Let $m\ge2$, let $d\in[1,m]$, and let $\Lambda\subset\mathbb{N}^m$ satisfy
$\dim(\Lambda)=d$. If, for some $p\ge1$ and $C>0$,
\begin{equation}\label{eq:bayart-product-general}
 \sum_{(i_1,\ldots,i_m)\in\Lambda}
 \prod_{j=1}^m |x^{(j)}_{i_j}|
 \le C\prod_{j=1}^m\|x^{(j)}\|_{\ell_p}
\end{equation}
for all $x^{(1)},\ldots,x^{(m)}\in c_{00}$, then $p\le m/d$. Consequently,
\[
\mathrm{prod}(m,d)\le \frac md.
\]
\end{proposition}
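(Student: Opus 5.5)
We test \eqref{eq:bayart-product-general} on indicator vectors of finite coordinate sets. Fix $n\ge2$ and choose $A_1,\ldots,A_m\subset\mathbb{N}$ with $|A_j|\le n$ realizing the maximum in the definition of $\Lambda(n)$:
\[
 |\Lambda\cap(A_1\times\cdots\times A_m)|=\Lambda(n).
\]
The maximum is attained because the quantity is an integer at most $n^m$. Put $x^{(j)}:=\mathbf 1_{A_j}\in c_{00}$. Every summand on the left of \eqref{eq:bayart-product-general} is nonnegative. The summand at $\mathbf i\in\Lambda$ equals $1$ exactly when $\mathbf i\in A_1\times\cdots\times A_m$, and $0$ otherwise. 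So the left-hand side equals $\Lambda(n)$. On the right, $\|x^{(j)}\|_{\ell_p}=|A_j|^{1/p}\le n^{1/p}$. Hence
\[
 \Lambda(n)\le C\,n^{m/p}\qquad(n\ge2).
\]

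Next we pass to the dimension using the equivalent $\limsup$ form in Definition~\ref{def:comb-dim}:
\[
 d=\dim(\Lambda)=\limsup_{n\to\infty}\frac{\log\Lambda(n)}{\log n}
 \le\limsup_{n\to\infty}\frac{\log C+(m/p)\log n}{\log n}=\frac mp .
\]
Alternatively, the bound says $\Lambda(n)=O(n^{m/p})$, so $m/p$ belongs to the set whose infimum defines $\dim(\Lambda)$. Either way $d\le m/p$, that is, $p\le m/d$, using $d\ge1>0$.

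For the consequence, let $\Lambda\in\mathcal C_d$ be $p$-product summable. The first part gives $p\le m/d$. Taking the supremum over all such $p$ and $\Lambda$ yields $\mathrm{prod}(m,d)\le m/d$.

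There is no real obstacle here. The only points to check are that the left side collapses exactly to the box count, which holds because all terms are nonnegative indicators, and that the constant $C$ is harmless in the $\limsup$. Finiteness of $\Lambda$ plays no role in this part.
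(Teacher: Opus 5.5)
Your proof is correct and follows essentially the same route as the paper: test the inequality on indicator vectors of the factors of a box with sides of size at most $n$, obtain $\Lambda(n)\le Cn^{m/p}$, and pass to the $\limsup$ to get $d\le m/p$. The only differences are cosmetic. The paper normalizes by $n^{-1/p}$ and maximizes over boxes afterwards, while you pick a maximizing box directly.
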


\begin{proof}
Fix finite sets $A_j\subset\mathbb{N}$ with $|A_j|\le n$ and set
\[
 x^{(j)}:=n^{-1/p}\mathbf 1_{A_j},\qquad 1\le j\le m.
\]
For every $j$,
\[
 \|x^{(j)}\|_{\ell_p}^p
 =\sum_{r\in A_j}n^{-1}
 =\frac{|A_j|}{n}
 \le1,
\]
so $\|x^{(j)}\|_{\ell_p}\le1$. Moreover, for
$\mathbf i=(i_1,\ldots,i_m)\in\Lambda$,
\[
 \prod_{j=1}^m|x^{(j)}_{i_j}|
 =n^{-m/p}\prod_{j=1}^m\mathbf 1_{A_j}(i_j).
\]
Hence the left-hand side of \eqref{eq:bayart-product-general} is exactly
\[
 n^{-m/p}|\Lambda\cap(A_1\times\cdots\times A_m)|.
\]
Applying \eqref{eq:bayart-product-general} therefore gives
\[
 n^{-m/p}|\Lambda\cap(A_1\times\cdots\times A_m)|\le C,
\]
and thus
\[
 |\Lambda\cap(A_1\times\cdots\times A_m)|\le Cn^{m/p}.
\]
Taking the maximum over all $A_1,\ldots,A_m$ with $|A_j|\le n$ yields
\[
 \Lambda(n)\le Cn^{m/p}.
\]
Consequently,
\[
 \frac{\log\Lambda(n)}{\log n}
 \le \frac{\log C}{\log n}+\frac mp.
\]
Passing to the limsup as $n\to\infty$ and using
$\log C/\log n\to0$ gives
\[
 d=\dim(\Lambda)\le\frac mp.
\]
\end{proof}

\subsubsection*{An obstruction from coordinate sections}

A second obstruction comes from coordinate sections. It is already implicit in \cite[Proposition~5.3]{BayartJEMS}; the direct form below makes explicit the section estimate that becomes decisive when the prescribed dimension is close to $m$.

\begin{proposition}\label{prop:general-section-obstruction}
Let $1\le r\le m-1$. If a support $\Lambda\subset\mathbb{N}^m$ satisfies
\eqref{eq:bayart-product-general} for some $p>r$, then
$\dim(\Lambda)\le m-r$. Hence, for every $d>1$,
\[
\mathrm{prod}(m,d)\le m-\lceil d\rceil+1.
\]
\end{proposition}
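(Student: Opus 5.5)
Suppose $\Lambda$ satisfies \eqref{eq:bayart-product-general} with some $p>r$ and constant $C$. We show that coordinate sections of $\Lambda$ obtained by fixing $r$ coordinates are uniformly bounded, and then bound $\Lambda(n)$ by $n^{m-r}$.

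\textbf{Step 1: bounded sections.} Fix a set $S\subset[m]$ with $|S|=r$ and a point $a=(a_j)_{j\in S}$. Let $F\subset\Lambda$ be a finite set of points whose $S$-coordinates equal $a$, and put $N:=|F|$. For $j\notin S$ let $B_j$ be the projection of $F$ to coordinate $j$, so $|B_j|\le N$.

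The naive test vectors are
\[
 x^{(j)}=e_{a_j}\quad(j\in S),
 \qquad
 x^{(j)}=N^{-1/p}\mathbf 1_{B_j}\quad(j\notin S).
\]
These have norm at most $1$, but the left-hand side is only $\ge N\cdot N^{-(m-r)/p}$. This is useless when $m-r\ge p$.

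So we dualize instead: let the non-fixed coordinates carry the unit vectors and spread mass over the fixed ones. The difficulty is that within a single section there is only one point in each fixed coordinate. We therefore use many sections at once. Take $N$ points
\[
 \mathbf i^{(1)},\ldots,\mathbf i^{(N)}\in\Lambda
\]
whose $S$-coordinates are pairwise distinct in every $j\in S$ and whose $S^c$-coordinates all agree with a fixed tuple $b$. The test vectors are
\[
 x^{(j)}=e_{b_j}\quad(j\notin S),
 \qquad
 x^{(j)}=N^{-1/p}\mathbf 1_{\{i^{(k)}_j:\,k\le N\}}\quad(j\in S).
\]
Each $x^{(j)}$ has norm at most $1$, and the left-hand side is at least $N\cdot N^{-r/p}$. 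This gives
\[
 N^{1-r/p}\le C.
\]
Since $p>r$, the exponent is positive, so $N\le C^{p/(p-r)}=:K$.

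Relabelling, the conclusion is the following. For every set $T\subset[m]$ with $|T|=m-r$ and every fixed value $b$ of the $T$-coordinates, the fiber
\[
 \Lambda_b=\{\mathbf i\in\Lambda:\mathbf i_T=b\}
\]
contains no $N>K$ points that are pairwise distinct in each coordinate of $[m]\setminus T$.

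\textbf{Step 2: from the matching bound to a box bound.} We want to deduce that for $|A_j|\le n$ the fiber $\Lambda_b\cap\prod_{j\notin T}A_j$ has $O(1)$ points. That bound would give directly
\[
 |\Lambda\cap(A_1\times\cdots\times A_m)|\le\prod_{j\in T}|A_j|\cdot O(1)=O(n^{m-r}).
\]

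The difficulty is that boundedness of "matchings" (points distinct in each coordinate) does not by itself bound the number of points in a fiber. For instance, the fiber could be a line in which one coordinate is constant. To handle this, I would argue by induction on $r$ and proceed greedily. Take a maximal family of fiber points that are pairwise distinct in every coordinate outside $T$. It has at most $K$ points. By maximality, every other fiber point shares some coordinate value $j\notin T$ with one of them. Hence the fiber is covered by at most $K(m-|T|)$ sub-fibers, each with one more coordinate fixed.

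Iterating reduces us to fibers with $m-1$ coordinates fixed, which are one-dimensional. In such a fiber all points are pairwise distinct in the single free coordinate, so the matching bound caps its size at $K$. However, the sub-fibers arising in the iteration fix more than $r$ coordinates. For those we need the test with $r'=m-1-(\text{number of fixed coordinates})<r$ spread coordinates, and that test only requires $p>r'$, which still holds.

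To keep the box bound, I would count only the fixed coordinates taken from the original $T$, of which there are $\le n$ choices each, together with boundedly many branches for the extra ones. This yields $\Lambda(n)\le C'n^{m-r}$. Hence
\[
 \dim(\Lambda)\le m-r.
\]

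\textbf{Step 3: the bound on $\mathrm{prod}(m,d)$.} Let $d>1$ and suppose $p>m-\lceil d\rceil+1$. Set $r:=m-\lceil d\rceil+1$. Then $1\le r\le m-1$, and Steps 1 and 2 give
\[
 \dim(\Lambda)\le \lceil d\rceil-1<d.
\]
So no $\Lambda\in\mathcal C_d$ is $p$-product summable, and therefore $\mathrm{prod}(m,d)\le m-\lceil d\rceil+1$.

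\textbf{Main obstacle.} The key difficulty is Step 2, the passage from bounded "distinct matchings" to bounded fibers. It requires the careful greedy covering and induction on the number of fixed coordinates described there.
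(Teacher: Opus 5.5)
Your proof is correct, and its core is the paper's argument: put point masses on $m-r$ fixed coordinates and normalized indicators on the $r$ free ones to get $N^{1-r/p}\le C$, then split a box into at most $n^{m-r}$ such fibers.

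The ``main obstacle'' you name in Step~2 does not exist. Step~1 never uses that the points are pairwise distinct in each coordinate of $S$. The set $\{i^{(k)}_j:k\le N\}$ has at most $N$ elements whether or not values repeat, so $\|N^{-1/p}\mathbf 1_{\{i^{(k)}_j:\,k\le N\}}\|_{\ell_p}\le 1$ in any case. Each of the $N$ distinct points of the fiber still contributes $N^{-r/p}$ to the left-hand side. So Step~1 already shows that the whole fiber $\Lambda_b$ has at most $K=C^{p/(p-r)}$ points. This is exactly what the paper does, more briefly, with unnormalized indicators: $R\le C\prod_{j}|A_j|^{1/p}\le CR^{r/p}$. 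Your ``line with a constant coordinate'' worry is therefore already handled.

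Your greedy covering and induction in Step~2 can be deleted, though the detour is valid. A maximal ``matching'' does cover the fiber by at most $Kr$ fibers with one more coordinate fixed, and $p>r'$ holds for every smaller number $r'$ of free coordinates. There are two harmless slips in it. The sub-fibers fix more than $m-r$ coordinates, not more than $r$. The number of free coordinates is $m$ minus the number fixed, not $m-1$ minus it.
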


\begin{proof}
Fix $m-r$ coordinates. Consider a finite subset of the resulting
$r$-dimensional section, with $R$ points, and let $A_1,\ldots,A_r$ be its
coordinate projections. Testing \eqref{eq:bayart-product-general} with indicator functions in
the $r$ free coordinates and point masses in the $m-r$ fixed coordinates
gives
\[
 R\le C\prod_{j=1}^r |A_j|^{1/p}.
\]
Each coordinate projection of a set of $R$ points contains at most $R$
points, so $|A_j|\le R$ for $1\le j\le r$. Therefore
\[
 R\le C R^{r/p}.
\]
If $R>0$, division by $R^{r/p}$ gives
\[
 R^{1-r/p}\le C.
\]
Since $p>r$,
\[
 1-\frac rp=\frac{p-r}{p}>0,
\]
and raising both sides to the power $p/(p-r)$ yields
\[
 R\le C^{p/(p-r)}=:K.
\]
Thus every such section contains at most $K$ points. A box $A_1\times\cdots\times A_m$ with
$|A_j|\le n$ is partitioned into at most $n^{m-r}$ of these sections, so
\[
|\Lambda\cap(A_1\times\cdots\times A_m)|\le Kn^{m-r}.
\]
Thus $\dim(\Lambda)\le m-r$. For a support of dimension $d>1$, take
$r=m-\lceil d\rceil+1$. If an exponent $p>r$ were admissible, then the
preceding estimate would give
\[
d=\dim(\Lambda)\le m-r=\lceil d\rceil-1<d,
\]
a contradiction. Hence $p\le r=m-\lceil d\rceil+1$, which gives the stated bound.
\end{proof}

\subsection{Model supports}\label{sec:model-supports}

\subsubsection*{Vandermonde supports}

{For integer dimensions, the discrete H\"older--Brascamp--Lieb criterion of \cite[Theorem~1.4]{CDKTY} applies. In the torsion-free case its optimal constant is $1$.} See also \cite{BCCT,BrascampLieb1976}.

\begin{proposition}\label{prop:discrete-HBL}
Let $G,G_1,\ldots,G_m$ be finitely generated torsion-free abelian groups,
let $\phi_j:G\to G_j$ be homomorphisms, and let $s_j\in[0,1]$. The
inequality
\begin{equation}\label{eq:HBL-general}
 \sum_{u\in G}\prod_{j=1}^m f_j(\phi_j(u))
 \le \prod_{j=1}^m\|f_j\|_{\ell_{1/s_j}(G_j)},
 \qquad \frac1{0}:=\infty,
\end{equation}
holds for every family of nonnegative finitely supported functions
$f_j:G_j\to[0,\infty)$ if and only if
\begin{equation}\label{eq:HBL-rank}
 \operatorname{rank}H
 \le\sum_{j=1}^m s_j\operatorname{rank}\phi_j(H)
\end{equation}
for every subgroup $H\le G$. Under these conditions the optimal constant in
\eqref{eq:HBL-general} is $1$.
\end{proposition}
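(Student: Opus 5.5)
This statement is the discrete H\"older--Brascamp--Lieb theorem of Christ--Demmel--Knight--Scanlon--Yelick \cite[Theorem~1.4]{CDKTY}, specialized to torsion-free groups. The plan is to cite it for the sufficiency direction with constant $1$, and to verify necessity and the optimal constant directly.

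\emph{Necessity of \eqref{eq:HBL-rank}.} Fix a subgroup $H\le G$. Since $G$ is finitely generated and torsion-free, $H\cong\mathbb Z^k$ with $k=\operatorname{rank}H$. Choose a basis $h_1,\dots,h_k$ and form the box
\[
B_N:=\Bigl\{\textstyle\sum a_ih_i: |a_i|\le N\Bigr\},\qquad |B_N|\asymp N^k .
\]
Each image $\phi_j(B_N)$ lies in a finitely generated subgroup $\phi_j(H)$ of rank $r_j=\operatorname{rank}\phi_j(H)$. It is contained in a box of size $O(N)$ in a lattice of rank $r_j$, up to the finite index between $\phi_j(H)$ and a free complement, so $|\phi_j(B_N)|\le CN^{r_j}$. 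Testing \eqref{eq:HBL-general} with $f_j=\mathbf 1_{\phi_j(B_N)}$ gives
\[
N^k\lesssim \prod_j N^{s_jr_j}.
\]
Letting $N\to\infty$ yields $k\le\sum_j s_jr_j$.

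\emph{Sufficiency.} The plan is to follow the CDKTY strategy: induct on $\operatorname{rank}G$ via critical subgroups, much as in the proof of Lemma~\ref{lem:uniform-cover-holder}. If some proper nontrivial $H$ attains equality in \eqref{eq:HBL-rank}, factor the sum over cosets of $H$. Apply the inductive hypothesis on $H$, with the maps $\phi_j|_H$, and on $G/H$, with the induced maps $G/H\to G_j/\phi_j(H)$; the latter group may have torsion, which is where the general CDKTY machinery is needed. Then recombine using H\"older, with the functions $F_j(\bar y)=\|f_j\|_{\ell_{1/s_j}(\bar y+\phi_j(H))}$. One checks that the rank inequalities pass to both subproblems.

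If no subgroup is critical, perturb the $s_j$ within the polytope defined by \eqref{eq:HBL-rank} toward a vertex, where a critical subgroup appears. The bound is multilinear-interpolable in $(s_j)$ by Riesz--Thorin for positive operators, which allows reduction to extreme points. At extreme points the bound either reduces via a critical subgroup or is a degenerate case where each $s_j\in\{0,1\}$; there the inequality is trivial because $\sum_{u}\prod f_j\le\prod_{s_j=1}\|f_j\|_1\prod_{s_j=0}\|f_j\|_\infty$ once the maps with $s_j=1$ are jointly injective. Joint injectivity follows from \eqref{eq:HBL-rank} applied to $H=\bigcap_{s_j=1}\ker\phi_j$, which forces $\operatorname{rank}H=0$, so $H=0$ by torsion-freeness.

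\emph{Optimal constant.} The CDKTY bound has constant equal to the size of the torsion of $G$, which is $1$ here, so the constant is at most $1$. Equality is attained by taking every $f_j=\mathbf 1_{\{0\}}$: the left side is then at least $1$ (the term $u=0$), and the right side equals $1$. So the optimal constant is exactly $1$.

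\emph{Main obstacle.} The hardest step is the quotient step in the sufficiency argument, since $G/H$ and $G_j/\phi_j(H)$ may acquire torsion. The plan is to invoke \cite{CDKTY} as a black box there rather than reprove it, since the paper explicitly attributes the result to that source.
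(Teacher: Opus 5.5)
Your proposal is correct and matches the paper, which gives no independent proof of this proposition but simply invokes \cite[Theorem~1.4]{CDKTY}; the constant there is the size of the torsion subgroup of $G$, hence $1$ here. Your box-counting proof of necessity and the test $f_j=\mathbf 1_{\{0\}}$, which shows no constant below $1$ can work, are valid self-contained additions, while your critical-subgroup sketch of sufficiency is, as you note, ultimately deferred to that same citation.
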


{Fix $m\ge2$ and an integer}
$d\in\{1,\ldots,m\}$. Choose distinct integers $a_{1},\ldots,a_{m}$ and define
homomorphisms
$\phi_{j}:\mathbb{Z}^{d}\to\mathbb{Z}$ by
\begin{equation}
\label{eq:Vandermonde-forms}\phi_{j}(u_{1},\ldots,u_{d}) :=u_{1}+a_{j}
u_{2}+\cdots+a_{j}^{d-1}u_{d}.
\end{equation}
The corresponding row vectors
\[
v_{j}=(1,a_{j},\ldots,a_{j}^{d-1})\in\mathbb{Q}^{d}
\]
have the usual Vandermonde property: every subfamily of at most $d$ vectors is
linearly independent.

{The Vandermonde homomorphisms satisfy the estimate below.}\par
\begin{lemma}\label{lem:Vandermonde-HBL}
For the homomorphisms \eqref{eq:Vandermonde-forms},
\begin{equation}
\label{eq:HBL-isotropic-vandermonde}\sum_{u\in\mathbb{Z}^{d}}\prod_{j=1}^{m}
f_{j}(\phi_{j}(u)) \le\prod_{j=1}^{m}\|f_{j}\|_{\ell_{m/d}(\mathbb{Z})}%
\end{equation}
for all nonnegative finitely supported functions
$f_j:\mathbb{Z}\to[0,\infty)$.
\end{lemma}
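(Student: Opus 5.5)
We will derive Lemma~\ref{lem:Vandermonde-HBL} from Proposition~\ref{prop:discrete-HBL}. Take $G=\mathbb{Z}^d$, $G_j=\mathbb{Z}$, let $\phi_j$ be as in \eqref{eq:Vandermonde-forms}, and set $s_j:=d/m\in[0,1]$. Then $1/s_j=m/d$, so \eqref{eq:HBL-general} is exactly \eqref{eq:HBL-isotropic-vandermonde} with constant $1$. The whole task is therefore to verify the rank condition \eqref{eq:HBL-rank}:
\[
 \operatorname{rank}H\le\frac dm\sum_{j=1}^m\operatorname{rank}\phi_j(H)
 \qquad(H\le\mathbb{Z}^d).
\]

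Fix a subgroup $H\le\mathbb{Z}^d$ with $k:=\operatorname{rank}H$; the case $k=0$ is trivial. Since $\phi_j(H)\le\mathbb{Z}$, its rank is $0$ or $1$. It is $0$ exactly when $\phi_j$ vanishes on $H$, equivalently when $v_j$ annihilates the $k$-dimensional subspace $V:=H\otimes\mathbb{Q}\subset\mathbb{Q}^d$. The vectors $v_j$ that annihilate $V$ lie in $V^\perp$, which has dimension $d-k$. By the Vandermonde property, any $d$ of the $v_j$ are linearly independent. Hence at most $d-k$ of them lie in $V^\perp$: a subfamily of at most $d$ vectors inside a space of dimension $d-k$ has at most $d-k$ members if it is independent, and if there were more than $d-k\le d-1$ such vectors, we could take $d-k+1\le d$ of them, which would be dependent, a contradiction. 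Consequently
\[
 \sum_{j=1}^m\operatorname{rank}\phi_j(H)\ge m-(d-k)=m-d+k .
\]

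It remains to check the scalar inequality $k\le\frac dm(m-d+k)$, that is, $mk\le dm-d^2+dk$, or $(m-d)k\le d(m-d)$. This holds because $k\le d$ and $m\ge d$.

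Proposition~\ref{prop:discrete-HBL} then gives \eqref{eq:HBL-isotropic-vandermonde} with optimal constant $1$. The only genuinely nontrivial step is the counting bound on the vectors $v_j$ that lie in $V^\perp$. It depends on the general-position property of Vandermonde rows, which holds because the $a_j$ are distinct, so every $d\times d$ Vandermonde minor is nonzero. The reduction from rank over $\mathbb{Z}$ to dimension over $\mathbb{Q}$ is standard for torsion-free groups.
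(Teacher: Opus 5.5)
Your proposal is correct and follows the paper's proof essentially step for step. Like the paper, you apply Proposition~\ref{prop:discrete-HBL} with common weight $s_j=d/m$, use Vandermonde general position to show that at most $d-\operatorname{rank}H$ of the $v_j$ lie in $V^\perp$, and conclude with the elementary inequality $(m-d)(d-\operatorname{rank}H)\ge0$; the only cosmetic difference is that the paper verifies independence of any $q\le d$ rows explicitly through the $q\times q$ Vandermonde minor.
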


\begin{proof}
Recall that $v_j=(1,a_j,\ldots,a_j^{d-1})\in\mathbb Q^d$ and
$\phi_j(u)=\langle v_j,u\rangle$ for $u\in\mathbb{Z}^d$, where
$\langle\cdot,\cdot\rangle$ is the standard bilinear form on $\mathbb Q^d$.

\proofstep{1}{Vandermonde independence}
If
$j_1,\ldots,j_q$ are distinct indices and $q\le d$, then the vectors
$v_{j_1},\ldots,v_{j_q}$ are linearly independent over $\mathbb Q$.
Indeed, it is enough to consider the first $q$ coordinates of these
vectors. The resulting $q\times q$ matrix is
\[
\begin{pmatrix}
1&a_{j_1}&\cdots&a_{j_1}^{q-1}\\
1&a_{j_2}&\cdots&a_{j_2}^{q-1}\\
\vdots&\vdots&&\vdots\\
1&a_{j_q}&\cdots&a_{j_q}^{q-1}
\end{pmatrix},
\]
whose determinant is $\prod_{1\le \alpha<\beta\le q}
(a_{j_\beta}-a_{j_\alpha})$.
Since the integers $a_1,\ldots,a_m$ are pairwise distinct, this
determinant is nonzero.

\proofstep{2}{The HBL rank condition}
We verify the rank condition in Proposition~\ref{prop:discrete-HBL} with
the common weight $s_1=\cdots=s_m=\frac{d}{m}$.
Let $H\le\mathbb{Z}^d$ be an arbitrary subgroup and put $r:=\operatorname{rank}H$.
If $r=0$, the required inequality is immediate. Suppose that
$1\le r\le d$ and let $V:=\operatorname{span}_{\mathbb Q}H\le\mathbb Q^d$. Then $\dim_{\mathbb Q}V=r$ and $\dim_{\mathbb Q}V^\perp=d-r$.

Since $\phi_j(H)$ is a subgroup of $\mathbb{Z}$, its rank is either zero or one. Furthermore,
\begin{align*}
\operatorname{rank}\phi_j(H)=0
&\iff \phi_j(H)=\{0\}\\
&\iff \langle v_j,h\rangle=0
       \quad\text{for every }h\in H\\
&\iff \langle v_j,x\rangle=0
       \quad\text{for every }x\in V\\
&\iff v_j\in V^\perp.
\end{align*}
Consequently,
\[
\operatorname{rank}\phi_j(H)
=
\begin{cases}
0,&v_j\in V^\perp,\\
1,&v_j\notin V^\perp.
\end{cases}
\]

We claim that at most $d-r$ of the vectors $v_1,\ldots,v_m$ can belong
to $V^\perp$. Otherwise, $V^\perp$, which has dimension $d-r$, would
contain $d-r+1$ of the vectors $v_j$. Since $d-r+1\le d$, the
Vandermonde property would make those vectors linearly independent,
which is impossible in a space of dimension $d-r$. Therefore
\[
|\{j:v_j\in V^\perp\}|\le d-r.
\]
It follows that at least $m-(d-r)=m-d+r$ of the homomorphisms $\phi_j$ have a nonzero image on $H$. Hence
\begin{equation}
\label{eq:Vandermonde-rank-sum}
\sum_{j=1}^m\operatorname{rank}\phi_j(H)
\ge m-d+r.
\end{equation}

\proofstep{3}{Verification of the weighted inequality}
Multiplying \eqref{eq:Vandermonde-rank-sum} by $d/m$ gives
\[
\sum_{j=1}^m\frac{d}{m}\operatorname{rank}\phi_j(H)
\ge
\frac{d}{m}(m-d+r).
\]
Moreover,
\begin{align*}
\frac{d}{m}(m-d+r)-r
&=
\frac{d(m-d+r)-mr}{m}\\
&=
\frac{dm-d^2+dr-mr}{m}\\
&=
\frac{(m-d)(d-r)}{m}\\
&\ge0,
\end{align*}
because $d\le m$ and $r\le d$. We conclude that
\[
\operatorname{rank}H=r
\le
\sum_{j=1}^m\frac{d}{m}\operatorname{rank}\phi_j(H).
\]
Since $H\le\mathbb{Z}^d$ was arbitrary, the discrete HBL rank condition
holds with $s_j=d/m$ for every $j$.

\proofstep{4}{Conclusion}
Proposition~\ref{prop:discrete-HBL} now yields
\[
\sum_{u\in\mathbb{Z}^d}
\prod_{j=1}^m f_j(\phi_j(u))
\le
\prod_{j=1}^m
\|f_j\|_{\ell_{1/s_j}(\mathbb{Z})}.
\]
Finally,
\[
\frac{1}{s_j}=\frac{m}{d},
\]
and hence the last inequality is precisely
\eqref{eq:HBL-isotropic-vandermonde}. 

\end{proof}

Fix a bijection $b:\mathbb{Z}\to\mathbb{N}$ and define
\begin{equation}
\label{eq:Lambda-md}\Lambda_{m,d} :=\bigl\{(b(\phi_{1}(u)),\ldots,b(\phi
_{m}(u))):u\in\mathbb{Z}^{d}\bigr\} \subset\mathbb{N}^{m}.
\end{equation}
Coordinatewise reindexing preserves cardinalities and sequence norms, so
Lemma~\ref{lem:Vandermonde-HBL} gives the corresponding isotropic product
estimate on $\Lambda_{m,d}$.

\begin{theorem}\label{thm:bayart-problem52}
Let
$m\ge2$ and $d\in\{1,\ldots,m\}$. Then
\begin{equation}
\label{eq:gamma-prod-exact}\mathrm{prod}(m,d)=\frac{m}{d}.
\end{equation}
Thus the quantity in \cite[Problem~5.2]{BayartJEMS} has the value $m/d$ at every integer
dimension. When $m/d$ is an integer, this agrees with the uniformly incident
construction in \cite[Lemma~4.7]{BayartJEMS}.
\end{theorem}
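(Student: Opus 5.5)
The upper bound $\mathrm{prod}(m,d)\le m/d$ is Proposition~\ref{prop:gamma-prod-upper}. It therefore suffices to show that the Vandermonde support $\Lambda_{m,d}$ of \eqref{eq:Lambda-md} has three properties: it is infinite, it satisfies $\dim(\Lambda_{m,d})=d$, and it is $m/d$-product summable. Membership in $\mathcal C_d$ and the lower bound $\mathrm{prod}(m,d)\ge m/d$ then follow at once.

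\emph{Injectivity and infinitude.} For $m\ge d$, the first $d$ vectors $v_1,\ldots,v_d$ are linearly independent by Step~1 of Lemma~\ref{lem:Vandermonde-HBL}. The map $u\mapsto(\phi_1(u),\ldots,\phi_m(u))$ is therefore injective on $\mathbb Z^d$. Composing with the bijection $b$, the map $u\mapsto(b(\phi_j(u)))_j$ is an injection of $\mathbb Z^d$ into $\mathbb N^m$, so $\Lambda_{m,d}$ is infinite.

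\emph{Product summability.} Given $x^{(1)},\ldots,x^{(m)}\in c_{00}$, set $f_j:=|x^{(j)}|\circ b$ on $\mathbb Z$. These are finitely supported, nonnegative, and satisfy $\|f_j\|_{\ell_{m/d}(\mathbb Z)}=\|x^{(j)}\|_{\ell_{m/d}}$. By injectivity, the sum over $\Lambda_{m,d}$ in \eqref{eq:product-summability-def} equals $\sum_{u\in\mathbb Z^d}\prod_j f_j(\phi_j(u))$. Lemma~\ref{lem:Vandermonde-HBL} bounds this by $\prod_j\|x^{(j)}\|_{\ell_{m/d}}$, which is \eqref{eq:product-summability-def} with $C=1$ and $p=m/d\ge1$.

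\emph{Dimension.} For the upper bound, test the inequality on indicator functions exactly as in the proof of Proposition~\ref{prop:gamma-prod-upper}. Taking $x^{(j)}=n^{-d/m}\mathbf 1_{A_j}$ with $|A_j|\le n$ gives $\Lambda_{m,d}(n)\le n^d$. This shows $\dim\le d$, with the exact bound holding with $C=1$.

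For the lower bound, take the box $u\in\{1,\ldots,N\}^d$. Each $\phi_j$ maps it into an integer interval of length at most $KN$, where $K:=\sum_k|a_j|^{k-1}$ is maximized over $j$. Let $A_j$ be the $b$-image of that interval, so $|A_j|\le KN+1$. The box then contains all $N^d$ (distinct) image points, giving $\Lambda_{m,d}(KN+1)\ge N^d$. Since $\Lambda(n)$ is monotone in $n$, this yields $\limsup_n\log\Lambda_{m,d}(n)/\log n\ge d$. Hence $\dim(\Lambda_{m,d})=d$.

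The argument has no genuine obstacle, since Lemma~\ref{lem:Vandermonde-HBL} carries the analytic content. The point needing most care is the injectivity, which is used twice: to identify the sum over the support with the sum over $\mathbb Z^d$, and to count the $N^d$ points in the lower bound for the dimension.

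Finally, the agreement with \cite[Lemma~4.7]{BayartJEMS} is only a remark. When $m/d=h$ is an integer, the uniformly $h$-incident supports give the same value by \eqref{eq:uniform-rho} and \eqref{eq:pmax-U}.
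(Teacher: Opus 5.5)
Your proposal is correct and follows the paper's proof essentially step by step: the upper bound from Proposition~\ref{prop:gamma-prod-upper}, injectivity of $u\mapsto(\phi_1(u),\ldots,\phi_m(u))$ via the first $d$ Vandermonde rows, the product estimate from Lemma~\ref{lem:Vandermonde-HBL} after reindexing by $b$, the bound $\Lambda_{m,d}(n)\le n^d$ from indicator testing, and the dimension lower bound from the image of a lattice box. The only differences are cosmetic: you use the box $\{1,\ldots,N\}^d$ and normalized test vectors, where the paper uses $\{-N,\ldots,N\}^d$ and unnormalized indicators.
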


\begin{proof}

The dimensional obstruction, Proposition~\ref{prop:gamma-prod-upper}, gives
\begin{equation}
\label{eq:gamma-upper-theorem33}
\mathrm{prod}(m,d)\le\frac{m}{d}.
\end{equation}

Now, choose distinct integers $a_1,\ldots,a_m$ and define
$\phi_j:\mathbb{Z}^d\longrightarrow\mathbb{Z}$, as in \eqref{eq:Vandermonde-forms}. Let $\Phi_d:\mathbb{Z}^d\longrightarrow\mathbb{Z}^m$ be defined by
\[
 \Phi_d(u):=(\phi_1(u),\ldots,\phi_m(u)).
\]
This map is injective. In fact, if $\Phi_d(u)=0$, then in particular $\phi_1(u)=\cdots=\phi_d(u)=0$. Thus
\[
\begin{pmatrix}
1&a_1&\cdots&a_1^{d-1}\\
1&a_2&\cdots&a_2^{d-1}\\
\vdots&\vdots&&\vdots\\
1&a_d&\cdots&a_d^{d-1}
\end{pmatrix}
\begin{pmatrix}
u_1\\u_2\\ \vdots\\u_d
\end{pmatrix}
=0.
\]
Since the Vandermonde matrix is invertible, $u=0$.

Fix a bijection $b:\mathbb{Z}\longrightarrow\mathbb{N}$ and define
\begin{equation}
\label{eq:Lambda-md-detailed}
\Lambda_{m,d}
:=
\left\{
\bigl(b(\phi_1(u)),\ldots,b(\phi_m(u))\bigr):
u\in\mathbb{Z}^d
\right\}
\subset\mathbb{N}^m.
\end{equation}
Because both $\Phi_d$ and $b$ are injective, the parametrization in
\eqref{eq:Lambda-md-detailed} is injective. In particular,
$\Lambda_{m,d}$ is infinite.

Let $x^{(1)},\ldots,x^{(m)}\in c_{00}$. For $1\le j\le m$, define
\[
 f_j:\mathbb{Z}\longrightarrow[0,\infty),\qquad
 f_j(t):=\bigl|x^{(j)}_{b(t)}\bigr|.
\]
Each $f_j$ is finitely supported. Since $b$ is a
bijection,
\begin{equation}
\label{eq:reindexing-norm}
\|f_j\|_{\ell_{m/d}(\mathbb{Z})}
=
\|x^{(j)}\|_{\ell_{m/d}(\mathbb{N})}.
\end{equation}
The injectivity of the parametrization of $\Lambda_{m,d}$ yields
\begin{align*}
\sum_{\mathbf i\in\Lambda_{m,d}}
\prod_{j=1}^m|x^{(j)}_{i_j}|
&=
\sum_{u\in\mathbb{Z}^d}
\prod_{j=1}^m
\bigl|x^{(j)}_{b(\phi_j(u))}\bigr|\\
&=
\sum_{u\in\mathbb{Z}^d}
\prod_{j=1}^m f_j(\phi_j(u)).
\end{align*}
By the Vandermonde HBL estimate,
Lemma~\ref{lem:Vandermonde-HBL}, and
\eqref{eq:reindexing-norm},
\begin{equation}
\label{eq:Lambda-md-product-estimate}
\sum_{\mathbf i\in\Lambda_{m,d}}
\prod_{j=1}^m|x^{(j)}_{i_j}|
\le
\prod_{j=1}^m
\|x^{(j)}\|_{\ell_{m/d}}.
\end{equation}
Thus $\Lambda_{m,d}$ is $(m/d)$-product summable, with constant one.

We verify that $\dim(\Lambda_{m,d})=d$. Let $n\in\mathbb{N}$ and let
$A_1,\ldots,A_m\subset\mathbb{N}$ be finite sets such that
$|A_j|\le n$. Applying \eqref{eq:Lambda-md-product-estimate} to
$x^{(j)}=\mathbf 1_{A_j}$ gives
\begin{align*}
\bigl|\Lambda_{m,d}\cap(A_1\times\cdots\times A_m)\bigr|
&=
\sum_{\mathbf i\in\Lambda_{m,d}}
\prod_{j=1}^m\mathbf 1_{A_j}(i_j)\\
&\le
\prod_{j=1}^m
\|\mathbf 1_{A_j}\|_{\ell_{m/d}}\\
&=
\prod_{j=1}^m|A_j|^{d/m}\\
&\le
\prod_{j=1}^m n^{d/m}
=n^d.
\end{align*}
Taking the supremum over all such $A_1,\ldots,A_m$, we obtain
\begin{equation}
\label{eq:Lambda-upper-growth}
\Lambda_{m,d}(n)\le n^d.
\end{equation}
Therefore
\[
\dim(\Lambda_{m,d})
=
\limsup_{n\to\infty}
\frac{\log\Lambda_{m,d}(n)}{\log n}
\le d.
\]
For $N\in\mathbb{N}$, consider the integer box $Q_N:=\{-N,\ldots,N\}^d\subset\mathbb{Z}^d$. It has cardinality $|Q_N|=(2N+1)^d$. Since $\Phi_d$ is injective,
\begin{equation}
\label{eq:Phi-box-cardinality}
|\Phi_d(Q_N)|=(2N+1)^d.
\end{equation}
For $u=(u_1,\ldots,u_d)\in Q_N$, we have
\begin{align*}
|\phi_j(u)|
&=
\left|u_1+a_ju_2+\cdots+a_j^{d-1}u_d\right|\\
&\le
N\left(1+|a_j|+\cdots+|a_j|^{d-1}\right).
\end{align*}
Put $M_j:=1+|a_j|+\cdots+|a_j|^{d-1}$. Then $\phi_j(Q_N)
\subset[-M_jN,M_jN]\cap\mathbb{Z}$, and consequently $|\phi_j(Q_N)|\le2M_jN+1$.
Hence there is a constant $C\ge1$ ($C=2\max_{j=1,\ldots,m}M_j+1$), independent of $N$, such that
\begin{equation}
\label{eq:coordinate-image-bound}
|\phi_j(Q_N)|\le CN
\qquad(j=1,\ldots,m,\ N\ge1).
\end{equation}
Define $A_{j,N}:=b\bigl(\phi_j(Q_N)\bigr)\subset\mathbb{N}$.
Since $b$ is bijective, \eqref{eq:coordinate-image-bound} implies $|A_{j,N}|\le CN$.
Moreover, the coordinatewise reindexed image of $\Phi_d(Q_N)$ is contained in $\Lambda_{m,d}\cap
(A_{1,N}\times\cdots\times A_{m,N})$. Using \eqref{eq:Phi-box-cardinality}, we obtain
\begin{equation}
\label{eq:Lambda-lower-growth}
\Lambda_{m,d}(CN)
\ge
\bigl|\Lambda_{m,d}\cap
(A_{1,N}\times\cdots\times A_{m,N})\bigr|
\ge(2N+1)^d.
\end{equation}
It follows that
\begin{align*}
\dim(\Lambda_{m,d})
&\ge
\limsup_{N\to\infty}
\frac{\log\Lambda_{m,d}(CN)}{\log(CN)}\\
&\ge
\lim_{N\to\infty}
\frac{d\log(2N+1)}{\log(CN)}
=d.
\end{align*}
Together with \eqref{eq:Lambda-upper-growth}, this proves
\begin{equation}
\label{eq:Lambda-exact-dimension}
\dim(\Lambda_{m,d})=d.
\end{equation}

Equations \eqref{eq:Lambda-md-product-estimate} and
\eqref{eq:Lambda-exact-dimension} exhibit an infinite support of
combinatorial dimension exactly $d$ that is $(m/d)$-product summable.
Therefore, by the definition of $\mathrm{prod}(m,d)$,
\[
\mathrm{prod}(m,d)\ge\frac{m}{d}.
\]
Combining this with \eqref{eq:gamma-upper-theorem33} gives
\[
\mathrm{prod}(m,d)=\frac{m}{d},
\]
as claimed.

\end{proof}

\subsubsection*{Higher-rank HBL supports}\label{sec:fusion-frame-HBL}

Equal-rank tight fusion frames extend the construction beyond the Vandermonde models. Their rank data fit the discrete H\"older--Brascamp--Lieb criterion; see \cite{CFMWZ}.

\begin{definition}[\cite{CFMWZ}]\label{def:TFF}
Let $1\le k\le D$ and let $L_1,\ldots,L_m\le\mathbb{R}^D$ satisfy
$\dim L_j=k$. For each $j$, let
\[
 P_{L_j}:\mathbb{R}^D\longrightarrow\mathbb{R}^D
\]
be the orthogonal projection onto $L_j$, and let
$I_D:\mathbb{R}^D\to\mathbb{R}^D$ be the identity operator. The family
$(L_j)_{j=1}^m$ is an \emph{equal-rank tight fusion frame} if
\begin{equation}\label{eq:TFF-tight}
 \sum_{j=1}^mP_{L_j}=\frac{mk}{D}I_D.
\end{equation}
\end{definition}

The rank condition is encoded by property $\mathcal S$, introduced by
Needham--Shonkwiler \cite[Section~4]{NeedhamShonkwiler}.

\begin{definition}[\cite{NeedhamShonkwiler}, Section~4]\label{def:property-S}
Let $1\le k\le D$ and let $K_1,\ldots,K_m\le\mathbb{R}^D$ satisfy
$\dim K_j=D-k$. The family
$(K_j)_{j=1}^m$ has \emph{property $\mathcal S$} if
\begin{equation}\label{eq:property-S-kernels}
 \sum_{j=1}^m\dim(V\cap K_j)
 \le\frac{m(D-k)}{D}\dim V
\end{equation}
for every proper linear subspace $V<\mathbb{R}^D$.
\end{definition}

{
The existence of the required equal-rank tight fusion frames is taken from
\cite{CFMWZ}, while the implication from tightness to property $\mathcal S$
is \cite[Proposition~4.6]{NeedhamShonkwiler}. {Lattice homomorphisms require rational subspaces. The next lemma provides them.}

\begin{lemma}\label{lem:rational-property-S}
Let $1\le r_0<D$. Suppose that $K_1,\ldots,K_m\le\mathbb{R}^D$ are
$r_0$-dimensional subspaces satisfying
\begin{equation}\label{eq:property-S-rational-lemma}
 \sum_{j=1}^m\dim(V\cap K_j)\le \frac{mr_0}{D}\dim V
\end{equation}
for every proper linear subspace $V<\mathbb{R}^D$. Then there are
$r_0$-dimensional subspaces $K'_1,\ldots,K'_m\le\mathbb{R}^D$, each admitting a
basis in $\mathbb Q^D$, for which the same inequalities hold.
\end{lemma}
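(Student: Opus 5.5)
The plan is to show that the tuples $(K_1,\ldots,K_m)$ satisfying \eqref{eq:property-S-rational-lemma} form a nonempty Zariski open subset of a product of Grassmannians, cut out by equations with rational coefficients. Once this is known, rational points are dense, so some tuple of rational subspaces lies in the good set.

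\emph{Step 1: finitely many conditions.} Write $c:=mr_0/D$ and $\mathcal G:=\mathrm{Gr}(r_0,D)^m$, viewed as a projective variety over $\mathbb Q$ via Plücker coordinates. The left-hand side of \eqref{eq:property-S-rational-lemma} depends on $V$ only through $v:=\dim V\in\{1,\ldots,D-1\}$ and the integers $e_j:=\dim(V\cap K_j)\in\{0,\ldots,\min(v,r_0)\}$. A tuple $K$ violates property $\mathcal S$ if and only if, for some datum $(v,e_1,\ldots,e_m)$ with $\sum_je_j>cv$, there is a $v$-dimensional $V$ with $\dim(V\cap K_j)\ge e_j$ for every $j$. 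There are only finitely many such data.

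\emph{Step 2: each violating set is closed over $\mathbb Q$.} Fix one such datum and consider the incidence set
\[
 Z_{v,e}:=\{(V,K)\in\mathrm{Gr}(v,D)\times\mathcal G:\ \dim(V\cap K_j)\ge e_j\ (1\le j\le m)\}.
\]
Each condition $\dim(V\cap K_j)\ge e_j$ says that $\dim(V+K_j)\le v+r_0-e_j$. This is a rank condition, given by the vanishing of minors with integer coefficients in the Plücker coordinates. Hence $Z_{v,e}$ is Zariski closed and defined over $\mathbb Q$.

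Since $\mathrm{Gr}(v,D)$ is projective, the projection $\mathrm{Gr}(v,D)\times\mathcal G\to\mathcal G$ is a closed map; this is the main theorem of elimination theory, and it preserves the field of definition. So the image $B_{v,e}$ of $Z_{v,e}$ is Zariski closed in $\mathcal G$ and defined over $\mathbb Q$. The complex points of $B_{v,e}$ may contain more than the real violating tuples. This does no harm, because only real, and in fact rational, points will be tested.

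\emph{Step 3: rational points in the complement.} Set $B:=\bigcup_{v,e}B_{v,e}$. By Step 1, a real tuple fails property $\mathcal S$ exactly when it lies in $B$. By hypothesis the given real tuple $(K_j)$ lies outside $B$, so $\mathcal G\setminus B$ is a nonempty Zariski open subset of the irreducible variety $\mathcal G$.

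$\mathcal G$ is a rational variety: it is covered by affine charts $\mathbb A^{r_0(D-r_0)m}$, given by graphs of matrices, and these charts are defined over $\mathbb Q$. A nonzero polynomial cannot vanish on all of $\mathbb Q^N$. Therefore the open set $\mathcal G\setminus B$ contains a point whose chart coordinates are rational. Such a point is a tuple $(K'_1,\ldots,K'_m)$ of $r_0$-dimensional subspaces, each spanned by rational vectors, and it satisfies \eqref{eq:property-S-rational-lemma}.

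Because rational points are Euclidean-dense in $\mathbb R^N$ and the open set is Euclidean-open there, $K'$ can even be chosen arbitrarily close to $K$. This is not needed for the lemma.

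\emph{Main obstacle.} The delicate point is Step 2, namely that the violation locus is closed even though $V$ ranges over infinitely many subspaces. Naive perturbation fails for exactly this reason: $\dim(V\cap K_j)$ is only upper semicontinuous, with no uniformity in $V$. The compactness (properness) of the Grassmannian $\mathrm{Gr}(v,D)$ is what resolves this.

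I would carry out Step 2 carefully with the explicit minors. I would also check that intersecting with the real locus is harmless: real points of $\mathcal G\setminus B$ satisfy property $\mathcal S$, because any real violating $V$ is in particular a complex one, so a real tuple with a real violating subspace lies in $B$.
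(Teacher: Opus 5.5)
There is a gap in Step~3, at the sentence asserting that the given real tuple lies outside $B$. The set $B_{v,e}$ that elimination theory gives you is the projection of the \emph{complex} points of $Z_{v,e}$. Write $K_j^{\mathbb C}:=K_j\otimes\mathbb C$. A real tuple $(K_j)$ belongs to $B_{v,e}$ as soon as some complex $v$-dimensional subspace $V\le\mathbb C^D$ satisfies $\dim_{\mathbb C}(V\cap K_j^{\mathbb C})\ge e_j$ for all $j$. Step~1 concerns only real $V$. So the hypothesis, which is a statement about real subspaces, does not by itself keep $(K_j)$ out of $B$. You also cannot switch to the image of real points, since such projections are in general only semialgebraic, not Zariski closed. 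Your closing check proves the other direction: a real violating $V$ is also a complex one. That direction is what makes the rational point you produce satisfy property $\mathcal S$. But the nonemptiness of $\mathcal G\setminus B$, on which the whole argument rests, needs the converse: a real tuple admitting a complex violating subspace must admit a real one. You say the extra complex points do no harm because only real points are tested, but the given real tuple is itself one of the points tested.

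The missing statement is true and has a short proof. For a real tuple, define on complex subspaces $f(V):=\sum_j\dim_{\mathbb C}(V\cap K_j^{\mathbb C})-\frac{mr_0}{D}\dim_{\mathbb C}V$.
\begin{itemize}
\item Since $(V\cap K)+(W\cap K)\subseteq(V+W)\cap K$, each intersection term is supermodular, so $f(V+W)+f(V\cap W)\ge f(V)+f(W)$.
\item Each $K_j^{\mathbb C}$ is conjugation-stable, so $f(\overline V)=f(V)$.
\item The endpoint values are $f(\{0\})=f(\mathbb C^D)=0$.
\end{itemize}
Suppose $\max f>0$ is attained at $V$. Then $V+\overline V$ also attains it. This subspace is conjugation-stable, hence the complexification of a real subspace with the same intersection dimensions. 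It is proper and nonzero because $f>0$ there. Thus real and complex property $\mathcal S$ coincide for real tuples. With this lemma inserted your route works, and it even shows that the good tuples form a nonempty Zariski open set defined over $\mathbb Q$.

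The paper avoids the issue by staying in the Euclidean topology on the real Grassmannians. Upper semicontinuity of $(V,K)\mapsto\dim(V\cap K)$ and compactness of $\mathcal G_r$ make $\mathbf K\mapsto\max_{V\in\mathcal G_r}\sum_j\dim(V\cap K_j)$ upper semicontinuous. So the good tuples form a Euclidean-open set, and density of rational subspaces finishes the proof. Your remark that naive perturbation fails is therefore misplaced: perturbation plus compactness of the real Grassmannian is exactly the paper's proof, and it never leaves $\mathbb R$.
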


\begin{proof}
For $1\le r<D$, let $\mathcal G_r$ denote the compact set of
$r$-dimensional subspaces of $\mathbb{R}^D$, endowed with the metric
$d(E,F)=\|P_E-P_F\|$, where $P_E$ denotes orthogonal projection onto $E$.
For a tuple $\mathbf K=(K_1,\ldots,K_m)$ of $r_0$-dimensional subspaces set
\[
 F_r(\mathbf K):=\max_{V\in\mathcal G_r}
       \sum_{j=1}^m\dim(V\cap K_j).
\]
The map
\[
 (V,K)\longmapsto \dim(V\cap K)
\]
is upper semicontinuous.  Indeed, in local matrix coordinates, if $B_V$ and
$B_K$ are full-column-rank matrices whose columns span $V$ and $K$, then
\[
 \dim(V\cap K)=r+r_0-\operatorname{rank}[B_V\;B_K],
\]
and matrix rank is lower semicontinuous.  Compactness of $\mathcal G_r$ then
implies that $F_r$ is upper semicontinuous: if
$\mathbf K^{(n)}\to\mathbf K$ and $V_n$ realizes
$F_r(\mathbf K^{(n)})$, a convergent subsequence of $(V_n)$ and the upper semicontinuity of $(V,K)\mapsto\dim(V\cap K)$ established in the previous sentence give
\[
 \limsup_{n\to\infty}F_r(\mathbf K^{(n)})\le F_r(\mathbf K).
\]

Since $F_r$ is integer-valued, the condition
\[
 F_r(\mathbf K)\le \left\lfloor\frac{mr_0}{D}r\right\rfloor
\]
is open.  Thus the tuples satisfying \eqref{eq:property-S-rational-lemma} for
all proper $V$ form an open subset of the product of the spaces
$\mathcal G_{r_0}$; there are only the finitely many dimensions
$r=1,\ldots,D-1$ to consider.

Finally, $r_0$-dimensional subspaces admitting a basis in $\mathbb Q^D$ are
dense in $\mathcal G_{r_0}$.  To see this, represent a given subspace by a
full-column-rank matrix $B\in M_{D\times r_0}(\mathbb{R})$ and approximate $B$
by rational matrices $B'\in M_{D\times r_0}(\mathbb Q)$.  For $B'$ sufficiently
close to $B$, it still has rank $r_0$, and its column space converges to the
column space of $B$.  Hence rational tuples are dense in the product
$\mathcal G_{r_0}^m$.  Approximating the given tuple inside the open set just
obtained yields $K'_1,\ldots,K'_m$ with the required properties.
\end{proof}}

\begin{theorem}\label{thm:fusion-frame-product}
Let $m,D,k$ be
positive integers with $1\le k\le D\le mk$. Suppose that there exists an
equal-rank tight fusion frame of $m$ $k$-dimensional subspaces of
$\mathbb{R}^{D}$. Then there exists a support $\Lambda\subset\mathbb{N}^{m}$
such that
\[
\dim(\Lambda)=\frac{D}{k}
\]
and
\begin{equation}
\label{eq:fusion-product-estimate}\sum_{(i_{1},\ldots,i_{m})\in\Lambda}
\prod_{j=1}^{m} |x^{(j)}_{i_{j}}| \le\prod_{j=1}^{m}\|x^{(j)}\|_{\ell_{mk/D}}%
\end{equation}
for all $x^{(1)},\ldots,x^{(m)}\in c_{00}$. Moreover, there is a constant
$c>0$ and arbitrarily large integers $N$ for which one can find finite sets
$B_N\subset\Lambda$ satisfying
\begin{equation}
\label{eq:fusion-host-blocks}
|B_N|\ge cN^{D/k},\qquad |\pi_j(B_N)|\le N\quad(j=1,\ldots,m),
\end{equation}
where $\pi_j$ denotes the $j$th coordinate projection.
Consequently,
\begin{equation}
\label{eq:fusion-gamma-exact}\mathrm{prod}\!\left(  m,\frac{D}%
{k}\right)  =\frac{mk}{D}.
\end{equation}

\end{theorem}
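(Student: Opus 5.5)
The construction mirrors the Vandermonde one in Theorem~\ref{thm:bayart-problem52}, with rank-$1$ forms replaced by rank-$k$ lattice maps. Start from the tight fusion frame $(L_j)$ and set $K_j:=L_j^\perp$, so $\dim K_j=D-k$. By \cite[Proposition~4.6]{NeedhamShonkwiler}, $(K_j)$ has property $\mathcal S$. If $k=D$, then $K_j=0$ and everything is trivial: the support is the full product in dimension $1$ per coordinate, up to identification. So assume $k<D$. Lemma~\ref{lem:rational-property-S}, with $r_0=D-k$, then gives rational kernels $K'_j$ that still satisfy \eqref{eq:property-S-kernels}. For each $j$, choose an integer matrix $M_j\in M_{k\times D}(\mathbb Z)$ of rank $k$ with $\ker M_j=K'_j$; this is possible because $K_j'$ is rational. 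Define $\phi_j:=M_j|_{\mathbb Z^D}:\mathbb Z^D\to\mathbb Z^k$, fix bijections $b:\mathbb Z^k\to\mathbb N$, and let $\Lambda:=\{(b(\phi_1(u)),\dots,b(\phi_m(u))):u\in\mathbb Z^D\}$.

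\textbf{Injectivity.} The map $\Phi(u)=(\phi_j(u))_j$ is injective, since $\bigcap_jK'_j=0$. Indeed, applying \eqref{eq:property-S-kernels} to $V=\bigcap_j K'_j$ (which is proper, as $k\ge1$) gives $m\dim V\le \frac{m(D-k)}{D}\dim V$, which forces $\dim V=0$.

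\textbf{HBL rank condition.} Next verify the condition of Proposition~\ref{prop:discrete-HBL} with $s_j=k/D$. For $H\le\mathbb Z^D$, let $V=\operatorname{span}_{\mathbb Q}H$ (equivalently its real span) and $r=\dim V$. Then $\operatorname{rank}\phi_j(H)=r-\dim(V\cap K'_j)$. If $V$ is proper, property $\mathcal S$ gives
\[
\sum_j\tfrac kD\operatorname{rank}\phi_j(H)\ge \tfrac kD\Bigl(mr-\tfrac{m(D-k)}{D}r\Bigr)=\tfrac{mk^2}{D^2}r .
\]
This is at least $r$ only if $mk^2\ge D^2$, so the bound as written is not sufficient. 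The correct bookkeeping is the following. Property $\mathcal S$ for kernels of dimension $D-k$, with $\dim V=r$, gives $\sum_j\dim(V\cap K_j)\le m(D-k)r/D$. Hence $\sum_j\operatorname{rank}\phi_j(H)\ge mr-m(D-k)r/D=mkr/D$, and multiplying by $s_j=D/(mk)$ yields exactly $r$. So the correct weight is $s_j=D/(mk)$, matching the exponent $1/s_j=mk/D$ in \eqref{eq:fusion-product-estimate}. For $V=\mathbb R^D$ the same equality holds because $\dim(\mathbb R^D\cap K_j)=D-k$.

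\textbf{Product estimate and dimension.} Proposition~\ref{prop:discrete-HBL}, combined with reindexing through $b$ exactly as in the proof of Theorem~\ref{thm:bayart-problem52}, gives \eqref{eq:fusion-product-estimate} with constant one. Testing on indicator functions gives $\Lambda(n)\le n^{D/k}$. For the lower bound take $B_N:=\Phi(Q_{N'})$ with $Q_{N'}=\{-N',\dots,N'\}^D$, reindexed by $b$. Each $\phi_j(Q_{N'})$ lies in a box of $\mathbb Z^k$ of side $O(N')$, so it has at most $C N'^k$ points. Choose $N'\asymp N^{1/k}$ so that $|\pi_j(B_N)|\le N$. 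Then $|B_N|\asymp N'^D\asymp N^{D/k}$, which proves \eqref{eq:fusion-host-blocks} and $\dim\Lambda=D/k$. Finally, \eqref{eq:fusion-gamma-exact} follows from \eqref{eq:fusion-product-estimate} together with Proposition~\ref{prop:gamma-prod-upper}.

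\textbf{Main obstacle.} The delicate step is the rank verification. It needs the exact weight normalization and the identification $\operatorname{rank}\phi_j(H)=\dim V-\dim(V\cap K'_j)$ for rational $K'_j$ and a subgroup $H$ with real span $V$. It also relies on property $\mathcal S$ surviving rationalization; Lemma~\ref{lem:rational-property-S} handles this, but the inequality must be applied to real spans of rational subspaces, which is legitimate since these are among all proper $V$.
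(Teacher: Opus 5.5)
Your proposal is correct and follows essentially the same route as the paper. Both arguments take the complements $K_j=L_j^\perp$ with property $\mathcal S$, rationalize via Lemma~\ref{lem:rational-property-S}, and use integer matrices with kernels $K_j'$. Both then check the HBL rank condition with common weight $s=D/(mk)$, get injectivity from $\bigcap_jK_j'=\{0\}$, and use the boxes $Q_R$ for the lower bound. Your count of integer points in a box of $\mathbb Z^k$ is a slightly more direct version of the paper's lattice-basis count.

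Two cosmetic fixes are needed. First, delete the abandoned computation with weight $k/D$. Second, in the case $k=D$ the support is the diagonal, not a full product (a full product would have dimension $m$). Your general construction with $\phi_j=\mathrm{id}$ produces the diagonal directly, without the rationalization lemma.
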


\begin{proof}
\proofstep{1}{Reduction to rational kernels}
If $k=D$, take the diagonal support. Its first $N$ diagonal points give
$|B_N|=N$ and $|\pi_j(B_N)|=N$ for every $j$, so all conclusions follow.
Assume $k<D$.
If $L_j^\perp=:K_j$, then $P_{K_j}=I_D-P_{L_j}$, and therefore
\[
 \sum_{j=1}^m P_{K_j}
 =mI_D-\frac{mk}{D}I_D
 =\frac{m(D-k)}{D}I_D.
\]
Thus taking orthogonal complements converts a tight fusion frame whose
subspaces have dimension $k$ into one whose subspaces have dimension $D-k$.
The converse follows from the same calculation. Hence the existence hypothesis
is equivalent to the existence of a tight fusion frame with subspace dimension
$D-k$.

By \cite[Proposition~4.6]{NeedhamShonkwiler}, the tight fusion frame
$(K_j)_{j=1}^m$ has property $\mathcal S$, i.e.
\[
 \sum_{j=1}^m\dim(V\cap K_j)
 \le \frac{m(D-k)}D\dim V
 \qquad(V<\mathbb{R}^D\text{ proper}).
\]
Applying Lemma~\ref{lem:rational-property-S} with $r_0=D-k$, we may replace it by a
property-$\mathcal S$ tuple $K_1,\ldots,K_m\subset\mathbb{R}^D$ such that every
$K_j$ admits a basis in $\mathbb Q^D$.
Put
\[
K_{j}^{\mathbb{Q}}:=K_{j}\cap\mathbb{Q}^{D}.
\]
Then $K_{j}^{\mathbb{Q}}$ is a $(D-k)$-dimensional $\mathbb{Q}$-subspace of
$\mathbb{Q}^{D}$. If $V\subset\mathbb Q^D$ is rational and
$V_{\mathbb{R}}:=\operatorname{span}_{\mathbb{R}}V$, rationality gives
\[
 \dim_{\mathbb Q}(V\cap K_j^{\mathbb Q})
 =\dim_{\mathbb{R}}(V_{\mathbb{R}}\cap K_j).
\]
Thus \eqref{eq:property-S-kernels} applies directly to the intersections
that occur in the rank computation below.
\proofstep{2}{Construction of the lattice homomorphisms}
For each $j$, choose a surjective rational linear map
\[
 \psi_j:\mathbb Q^D\longrightarrow\mathbb Q^k,
 \qquad \ker\psi_j=K_j^{\mathbb Q}.
\]
Represent $\psi_j$ by a $k\times D$ matrix with rational entries. Multiplying
that matrix by a common positive denominator does not change its kernel or its
rank and produces an integer matrix. Its restriction to $\mathbb{Z}^D$ is a
homomorphism
\[
\varphi_{j}:\mathbb{Z}^{D}\longrightarrow\mathbb{Z}^{k}
\]
whose $\mathbb Q$-linear extension has kernel $K_j^{\mathbb Q}$ and rank $k$.
\proofstep{3}{The discrete HBL rank inequality}
Let $H\le\mathbb{Z}^D$ and put
$V:=\operatorname{span}_{\mathbb Q}H$. Tensoring the image with $\mathbb Q$
gives
\[
 \operatorname{rank}\varphi_j(H)
 =\dim_{\mathbb Q}\psi_j(V)
 =\dim_{\mathbb Q}V-\dim_{\mathbb Q}(V\cap K_j^{\mathbb Q}).
\]
If $V=\mathbb{Q}^{D}$, then $\operatorname{rank}\varphi_{j}(H)=k$ for every $j$,
so
\[
\sum_{j=1}^{m}\operatorname{rank}\varphi_{j}(H)=mk =\frac{mk}{D}%
\operatorname{rank}H.
\]
If $V$ is proper, summing and using property $\mathcal{S}$ for the real span
of $V$ yields
\begin{align*}
\sum_{j=1}^{m}\operatorname{rank}\varphi_{j}(H)  &  =m\dim_{\mathbb{Q}}%
V-\sum_{j=1}^{m}\dim_{\mathbb{Q}}(V\cap K_{j}^{\mathbb{Q}})\\
&  \ge\frac{mk}{D}\dim_{\mathbb{Q}} V =\frac{mk}{D}\operatorname{rank}H.
\end{align*}
Thus the discrete H\"older--Brascamp--Lieb rank criterion holds with the
common weight
\[
 s=\frac{D}{mk}.
\]
\proofstep{4}{The product estimate and the support}
The existence of a fusion frame forces $mk\ge D$, so $0<s\le1$, as required
in Proposition~\ref{prop:discrete-HBL}. Applying that proposition with
$G=\mathbb{Z}^D$ and $G_j=\varphi_j(\mathbb{Z}^D)$ gives
\begin{equation}
\label{eq:fusion-HBL-lattice}\sum_{u\in\mathbb{Z}^{D}}\prod_{j=1}^{m}
f_{j}(\varphi_{j}(u)) \le\prod_{j=1}^{m}\|f_{j}\|_{\ell_{mk/D}(\varphi_j(\mathbb{Z}^D))}%
\end{equation}
for all nonnegative finitely supported functions on the image lattices
$\varphi_{j}(\mathbb{Z}^{D})$. Property $\mathcal{S}$ also forces
\[
\bigcap_{j=1}^{m} K_{j}^{\mathbb{Q}}=\{0\}.
\]
Indeed, if a nonzero rational subspace $V$ were contained in every
$K_j^{\mathbb Q}$, then $V_{\mathbb{R}}$ would be a nonzero proper subspace of
$\mathbb{R}^D$ and
\[
 \sum_{j=1}^m\dim(V_{\mathbb{R}}\cap K_j)
 =m\dim V
 >\frac{m(D-k)}{D}\dim V,
\]
contrary to property $\mathcal S$. Hence
$\bigcap_jK_j^{\mathbb Q}=\{0\}$. Since the kernel of the product map is this
intersection,
\[
\Psi_D=(\varphi_{1},\ldots,\varphi_{m}):\mathbb{Z}^{D}\longrightarrow(\mathbb{Z}%
^{k})^{m}
\]
is injective. Fix bijections from the countable lattices $\varphi_{j}%
(\mathbb{Z}^{D})$ onto subsets of $\mathbb{N}$ and let $\Lambda\subset
\mathbb{N}^{m}$ be the coordinatewise image of $\Psi_D(\mathbb{Z}^{D})$.
Reindexing \eqref{eq:fusion-HBL-lattice} gives
\eqref{eq:fusion-product-estimate}. Testing this estimate on indicator
functions gives $\dim(\Lambda)\le D/k$.

\proofstep{5}{Sharpness of the dimension}
For the reverse inequality, let
$Q_{R}=[-R,R]^{D}\cap\mathbb{Z}^{D}$. Since each $\varphi_j$ is represented by a fixed integer matrix of rank $k$,
its image lattice $L_j:=\varphi_j(\mathbb{Z}^D)$ has rank $k$. The set
$\varphi_j(Q_R)$ lies in $L_j$ inside a Euclidean ball of radius $C_jR$.
Choosing a lattice basis of $L_j$ therefore gives
\[
 |\varphi_j(Q_R)|\le C'_j(1+R)^k.
\]
After enlarging one constant $C\ge1$ and restricting to $R\ge1$, we have
\[
|\varphi_{j}(Q_{R})|\le C R^{k}\qquad(j=1,\ldots,m).
\]
Injectivity gives $|\Psi_D(Q_{R})|=(2R+1)^{D}$. Let $B_R$ be the coordinatewise
reindexed image of $\Psi_D(Q_R)$ in $\Lambda$ and set
\[
N_R:=\lceil C R^k\rceil.
\]
Then $N_R\to\infty$, $|\pi_j(B_R)|\le N_R$, and, after decreasing a
constant $c>0$ if necessary,
\[
|B_R|=(2R+1)^D\ge cN_R^{D/k}.
\]
Thus the sets $B_R$, indexed by the arbitrarily large integers $N_R$, satisfy
\eqref{eq:fusion-host-blocks}. They also show directly that
$\dim(\Lambda)\ge D/k$. Hence $\dim(\Lambda)=D/k$. The lower bound in
\eqref{eq:fusion-gamma-exact} follows from \eqref{eq:fusion-product-estimate},
and the reverse inequality is Proposition~\ref{prop:gamma-prod-upper}.
\end{proof}

\subsection{Dimension reduction}\label{sec:dimension-thinning}

Random constructions of sets with prescribed combinatorial dimension go back to Blei--K\"orner \cite{BleiKorner}; a multilinear adaptation appears in \cite[Lemma~4.3]{BayartJEMS}. The form needed here retains points independently on finite blocks while controlling both block cardinality and all rectangle counts. The restriction $d>1$ enters only through the uniform rectangle estimate.

\begin{lemma}\label{lem:dimension-thinning}
Let $m\ge2$ and let
$1<d_{0}\le m$. Suppose that a support $\Lambda_{0}\subset\mathbb{N}^{m}$
satisfies
\begin{equation}
\label{eq:thinning-host-product}\sum_{(i_{1},\ldots,i_{m})\in\Lambda_{0}}%
\prod_{j=1}^{m}|x^{(j)}_{i_{j}}| \le C\prod_{j=1}^{m}\|x^{(j)}\|_{\ell_{m/d_0}}%
\end{equation}
for all $x^{(1)},\ldots,x^{(m)}\in c_{00}$. Assume moreover that
there are arbitrarily large integers $N$ for which one can find a finite set
$B_{N}\subset\Lambda_{0}$ such that
\begin{equation}
\label{eq:thinning-host-blocks}|B_{N}|\ge cN^{d_{0}}, \qquad|\pi_{j}%
(B_{N})|\le N\quad(j=1,\ldots,m),
\end{equation}
where $c>0$ is independent of $N$. Then, for every $d$ with $1<d<d_{0}$, there
exists a support $\Lambda\subset\mathbb{N}^{m}$ such that
\[
\dim(\Lambda)=d
\]
and
\begin{equation}
\label{eq:thinning-conclusion-product}\sum_{(i_{1},\ldots,i_{m})\in\Lambda
}\prod_{j=1}^{m}|x^{(j)}_{i_{j}}| \le C\prod_{j=1}^{m}\|x^{(j)}\|_{\ell_{m/d_0}}%
\end{equation}
for all $x^{(1)},\ldots,x^{(m)}\in c_{00}$.
\end{lemma}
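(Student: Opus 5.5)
The plan is to take $\Lambda$ to be a random subset of the host support, built from a sparse sequence of well-separated host blocks. Since $\Lambda\subset\Lambda_0$, the product estimate \eqref{eq:thinning-conclusion-product} is inherited from \eqref{eq:thinning-host-product}, because all terms are nonnegative. The whole task is therefore to arrange $\dim(\Lambda)=d$.

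\textbf{Step 1: choosing blocks and retaining points.} Choose a rapidly increasing sequence $N_1<N_2<\cdots$ of admissible integers with host blocks $B_k:=B_{N_k}$. Passing to a subsequence, I will make the blocks pairwise disjoint in every coordinate, so that $\pi_j(B_k)\cap\pi_j(B_{k'})=\varnothing$ for $k\ne k'$. This should be possible because $\Lambda_0$ has finite sections. Indeed, \eqref{eq:thinning-host-product} with exponent $m/d_0<m/1$ and the argument of Proposition~\ref{prop:general-section-obstruction} bound the sections with $m-1$ fixed coordinates. Together with the growth $|B_N|\ge cN^{d_0}$, this lets one discard the few points of $B_N$ meeting earlier coordinate values while keeping $\gtrsim N^{d_0}$ points. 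If that fails, I would instead just tolerate overlaps and control them by the sparsity of the sequence.

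Inside $B_k$, retain each point independently with probability $q_k:=N_k^{d-d_0}$ (after first trimming $B_k$ to exactly $\lceil cN_k^{d_0}\rceil$ points). Let $\Lambda:=\bigcup_k\Lambda_k$. Then $\mathbb E|\Lambda_k|\asymp N_k^d$. By Chernoff, almost surely $|\Lambda_k|\ge \tfrac c2N_k^d$ for all large $k$. Since $|\pi_j(\Lambda_k)|\le N_k$, this gives $\Lambda(N_k)\gtrsim N_k^d$ and hence $\dim\Lambda\ge d$.

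\textbf{Step 2: the upper rectangle bound (main obstacle).} I must show that almost surely $|\Lambda\cap(A_1\times\cdots\times A_m)|\le C'n^{d}\log n$ (or $n^{d+o(1)}$) for all boxes with $|A_j|\le n$. I would first handle a single block. For fixed $A_1,\dots,A_m\subset\pi_j(B_k)$ with $|A_j|\le n$, the host count $X:=|B_k\cap\prod A_j|$ satisfies $X\le Cn^{d_0}$ by testing \eqref{eq:thinning-host-product} on indicators, and trivially $X\le |B_k|$. The retained count is a sum of independent Bernoullis with mean $q_kX\le C\min(n^{d_0}N_k^{d-d_0},N_k^d)\le Cn^d$, using $n\le N_k$ and $d<d_0$. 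The number of box choices is at most $N_k^{mn}$, which is too many for a plain Chernoff bound when the target is $n^d$. This is where $d>1$ enters. Chernoff gives $\Pr[\text{count}>t]\le 2^{-t}$ for $t\ge 2e\,Cn^d$, and taking $t:=K n^d$ (plus a $\log N_k$ slack for small $n$) beats the union bound $\exp(mn\log N_k)$ because $n^d\gg n\log N_k$ once $n\ge (\log N_k)^{2/(d-1)}$. For smaller $n$, I would use the deterministic bound. There the box meets the block in at most $Cn^{d_0}$ points, and one needs $\le n^{d}$ up to constants. Since $n$ is only polylog in $N_k$, a union bound with $t=K\max(n^d,\log N_k)$ works instead: $\log N_k\cdot$ (polylog) is still $\le n^{d+\epsilon}$? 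It is not for tiny $n$. So in that regime I would use the fact that $q_k$ is polynomially small. The count is then at most a Bernoulli sum with mean $\le Cn^{d_0}N_k^{d-d_0}\ll1$, and with the union bound over $N_k^{mn}$ boxes the probability of exceeding $\max(n^d,2m/(d_0-d)+1)$ is summable. Borel--Cantelli then gives bounds of the form $n^d$ up to a constant, holding for all $n$ and all large $k$.

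\textbf{Step 3: summing over blocks.} A box with $|A_j|\le n$ meets blocks $k$ with $N_k$ arbitrary. The blocks with $N_k\le n$ contribute at most $\sum_{N_k\le n}|\Lambda_k|\lesssim\sum N_k^d\lesssim n^d$ by lacunarity. The remaining blocks each contribute $\le Cn^d$ with a uniform constant, but there can be many of them. To close this, I would choose $N_k$ so fast-growing that the per-block bound for blocks with $N_k>n$ has the form $Cn^{d}\cdot 2^{-k}$ plus a bounded term. This would come by sharpening the Step 2 exponents (taking $q_k=N_k^{d-d_0}/k^2$, which leaves the lower bound exponent unchanged). Alternatively, I would accept an $n^{d+\epsilon}$ loss via $\log$ factors, since only $\dim\Lambda=d$, not exactness, is required. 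Step 2's union-bound bookkeeping is what I expect to be delicate; the rest is routine.
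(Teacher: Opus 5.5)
\textbf{The gap is in Step~3.} You bound each block's contribution using the global side length $n$, and then try to make the sum over blocks converge through decay in $k$. That accounting cannot work. Any $n$ points of $\Lambda_k$ lie in a box whose sides have at most $n$ elements. So whenever $|\Lambda_k|\ge n$, the maximum of $|\Lambda_k\cap(A_1\times\cdots\times A_m)|$ over boxes with $|A_j|\le n$ is at least $n$, whatever $q_k$ is. Consequently no per-block bound of the form $Cn^d2^{-k}+O(1)$ (or $Cn^d/k^2+O(1)$) can hold once $n^{d-1}$ is small compared with $2^k$ and $n$ exceeds the $O(1)$ term.

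Moreover, a box with $|A_j|\le n$ can meet $n$ different large blocks: take one point from each. Bounding each such block by a per-block maximum at side $n$ therefore gives at best order $n\cdot n=n^2$. This exceeds $n^{d+\epsilon}$ for small $\epsilon$ whenever $d<2$. That range includes $1<d<m/(m-1)$, where the lemma is used in Corollary~\ref{cor:initial-plateau}. Logarithmic or $n^{\epsilon}$ slack does not help, because the loss is a full power of $n$.

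\textbf{The missing idea is to localize the block estimate,} using the coordinate disjointness you arranged in Step~1. Put $n_k:=\max_j|A_j\cap\pi_j(\Lambda_k)|$. Disjointness gives $\sum_k n_k\le\sum_j|A_j|\le mn$; this is the budget \eqref{eq:thinning-budget} in the paper. Also, $\Lambda_k\cap(A_1\times\cdots\times A_m)$ lies in a box of block $k$ with sides at most $n_k\le N_k$.

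So suppose that, for all large $k$ and every $1\le b\le N_k$, every box of block $k$ with sides at most $b$ contains at most $Kb^d$ retained points. Then $|\Lambda\cap(A_1\times\cdots\times A_m)|\le M_0+K\sum_k n_k^d\le M_0+K(mn)^d$, since $\sum_k n_k^d\le(\sum_k n_k)^d$ for $d\ge1$. Here $M_0$ bounds the finitely many early blocks.

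Step~2 yields this uniform statement more simply than you suggest. Take threshold $\lceil Kb^d\rceil$ with $K$ large. The Chernoff base is at most $(eC/K)(b/N_k)^{d_0-d}$. The resulting tail $\exp\bigl(-Kb^d[\log(K/(eC))+(d_0-d)\log(N_k/b)]\bigr)$ beats the box count $\exp\bigl(mb\log(eN_k/b)\bigr)$ for every $b\le N_k$, because $b^d\ge b$. No separate small-$b$ regime is needed. The constant $K$ must be large, however: your threshold $\max(n^d,2m/(d_0-d)+1)$ is too small when $d$ is close to $1$.

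\textbf{A secondary flaw in Step~1.} Your justification of coordinate disjointness is wrong as stated. Proposition~\ref{prop:general-section-obstruction} with one free coordinate needs $m/d_0>1$, which fails for $d_0=m$ (e.g.\ $\Lambda_0=\mathbb N^m$). In any case, bounded lines would not control the hyperplane slices of $B_N$ that you must discard.

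Instead, test \eqref{eq:thinning-host-product} with a point mass in coordinate $j$ and indicators of $\pi_{j'}(B_N)$ in the other coordinates. This gives $|\{\mathbf i\in B_N:i_j=r\}|\le CN^{d_0(m-1)/m}$. Removing the hyperplanes through the at most $U$ values per coordinate used by earlier blocks then costs at most $mUCN^{d_0-d_0/m}=o(N^{d_0})$.

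\textbf{Comparison with the paper.} With these two repairs your route is valid and genuinely different from the paper's. Keeping $\Lambda\subset\Lambda_0$ gives \eqref{eq:thinning-conclusion-product} for free. The paper instead transports the thinned blocks onto fresh, pairwise disjoint coordinate sets, so $\Lambda\not\subset\Lambda_0$. It must then recover the product estimate by H\"older's inequality in the block index, together with $d_0\ge1$.
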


\begin{proof}
{
Fix $d$ with $1<d<d_0$. We select finite subsets of the sets $B_N$ by working on finite probability spaces.

\proofstep{1}{The Bernoulli probability space}
For each admissible $N$, consider
\[
\Omega_N:=2^{B_N}=\{H:H\subseteq B_N\},\qquad \mathcal F_N:=2^{\Omega_N}.
\]
For every $H\subseteq B_N$, define
\begin{equation}\label{eq:thinning-measure}
\mu_N(\{H\}):=
\bigl(N^{d-d_0}\bigr)^{|H|}
\bigl(1-N^{d-d_0}\bigr)^{|B_N|-|H|},
\end{equation}
and extend $\mu_N$ to every subset of $\Omega_N$ by finite summation. Since $d<d_0$, we have $0<N^{d-d_0}\le1$. Grouping subsets of $B_N$ according to their cardinalities and using the binomial theorem gives
\[
\mu_N(\Omega_N)
=\sum_{r=0}^{|B_N|}\binom{|B_N|}{r}
\bigl(N^{d-d_0}\bigr)^r
\bigl(1-N^{d-d_0}\bigr)^{|B_N|-r}
=1.
\]
Thus $(\Omega_N,\mathcal F_N,\mu_N)$ is a finite probability space.
For $u\in B_N$, let
{
\[
\chi_u:\Omega_N\longrightarrow\{0,1\},\qquad
\chi_u(H):=\mathbf 1_{\{u\in H\}}.
\]}
Every subset containing $u$ has a unique representation $H=\{u\}\cup L$, with $L\subseteq B_N\setminus\{u\}$. Consequently,
\begin{equation}\label{eq:thinning-point-integral}
\int_{\Omega_N}\chi_u\,d\mu_N=N^{d-d_0}.
\end{equation}

\proofstep{2}{Subsets having sufficiently large cardinality}
Since $|H|=\sum_{u\in B_N}\chi_u(H)$, linearity of the integral and \eqref{eq:thinning-point-integral} give
\begin{equation}\label{eq:thinning-mean-size-measure}
\int_{\Omega_N}|H|\,d\mu_N(H)
=N^{d-d_0}|B_N|
\ge cN^d.
\end{equation}
Consider
\[
\mathcal A_N:=\left\{H\subseteq B_N:|H|\le \frac12N^{d-d_0}|B_N|\right\}.
\]
For every $\lambda>0$ and $H\in\mathcal A_N$,
\[
e^{-\lambda|H|}\ge
\exp\!\left(-\frac{\lambda}{2}N^{d-d_0}|B_N|\right).
\]
Hence
\begin{equation}\label{eq:thinning-small-set-measure}
\mu_N(\mathcal A_N)
\exp\!\left(-\frac{\lambda}{2}N^{d-d_0}|B_N|\right)
\le \int_{\Omega_N}e^{-\lambda|H|}\,d\mu_N(H).
\end{equation}
The integral on the right is computed exactly:
\begin{align*}
\int_{\Omega_N}e^{-\lambda|H|}\,d\mu_N(H)
&=\sum_{H\subseteq B_N}e^{-\lambda|H|}
\bigl(N^{d-d_0}\bigr)^{|H|}
\bigl(1-N^{d-d_0}\bigr)^{|B_N|-|H|}\\
&=\bigl(1-N^{d-d_0}+N^{d-d_0}e^{-\lambda}\bigr)^{|B_N|}\\
&=\bigl(1+N^{d-d_0}(e^{-\lambda}-1)\bigr)^{|B_N|}\\
&\le \exp\!\left(N^{d-d_0}|B_N|(e^{-\lambda}-1)\right),
\end{align*}
where $1+t\le e^t$ was used. Taking $\lambda=\log2$ in \eqref{eq:thinning-small-set-measure} yields
\[
\mu_N(\mathcal A_N)
\le \exp\!\left[N^{d-d_0}|B_N|\left(\frac{\log2}{2}-\frac12\right)\right]
\le \exp\!\left(-\frac18N^{d-d_0}|B_N|\right)
\le \exp\!\left(-\frac c8N^d\right)\longrightarrow0.
\]
Thus every $H\notin\mathcal A_N$ satisfies
\begin{equation}\label{eq:thinning-lower-size}
|H|>\frac12N^{d-d_0}|B_N|\ge \frac c2N^d.
\end{equation}

\proofstep{3}{Points of $B_N$ in a rectangle}
Let $1\le n\le N$ and let $A_j\subseteq\pi_j(B_N)$ satisfy $|A_j|\le n$ for $j=1,\ldots,m$. Applying \eqref{eq:thinning-host-product} to the indicator functions of the $A_j$ gives
\begin{equation}\label{eq:thinning-host-rectangle}
|B_N\cap(A_1\times\cdots\times A_m)|
\le C\prod_{j=1}^m|A_j|^{d_0/m}
\le Cn^{d_0}.
\end{equation}
For the rectangle $A_1\times\cdots\times A_m$, define
{
\[
X_{A_1,\ldots,A_m}:\Omega_N\longrightarrow\mathbb N_0,\qquad
X_{A_1,\ldots,A_m}(H):=|H\cap(A_1\times\cdots\times A_m)|.
\]}
By \eqref{eq:thinning-point-integral} and \eqref{eq:thinning-host-rectangle},
\begin{equation}\label{eq:thinning-rectangle-integral}
\int_{\Omega_N}X_{A_1,\ldots,A_m}(H)\,d\mu_N(H)
\le CN^{d-d_0}n^{d_0}
=Cn^d\left(\frac nN\right)^{d_0-d}.
\end{equation}

\proofstep{4}{Measure of the sets violating the desired estimate}
Fix $\varepsilon>0$. Choose $K>eC$ so large that
\begin{equation}\label{eq:thinning-K-choice}
K(d_0-d)\ge4m\left(1+\frac1{\log2}\right),
\qquad \frac{K(d_0-d)}2>2.
\end{equation}
For fixed $n$ and fixed $A_j\subseteq\pi_j(B_N)$ with $|A_j|\le n$, put
\[
\mathcal E_{N,n,A_1,\ldots,A_m}
:=\left\{H\subseteq B_N:
X_{A_1,\ldots,A_m}(H)\ge \lceil Kn^{d+\varepsilon}\rceil\right\}.
\]
If $H$ belongs to this set, then it contains a subset of
$B_N\cap(A_1\times\cdots\times A_m)$ having exactly
$\lceil Kn^{d+\varepsilon}\rceil$ elements. For any fixed subset $F\subseteq B_N$, direct summation from \eqref{eq:thinning-measure} gives
\[
\mu_N(\{H\subseteq B_N:F\subseteq H\})
=\bigl(N^{d-d_0}\bigr)^{|F|}.
\]
Set $t:=\lceil Kn^{d+\varepsilon}\rceil$. If
$t>|B_N\cap(A_1\times\cdots\times A_m)|$, then the set
$\mathcal E_{N,n,A_1,\ldots,A_m}$ is empty. Otherwise, finite subadditivity,
\eqref{eq:thinning-host-rectangle}, and $\binom ab\le(ea/b)^b$ give
\[
\mu_N(\mathcal E_{N,n,A_1,\ldots,A_m})
\le
\left(
\frac{eC\,N^{d-d_0}n^{d_0}}{t}
\right)^t.
\]
Since $t\ge Kn^{d+\varepsilon}$,
\[
\frac{eC\,N^{d-d_0}n^{d_0}}{t}
\le
\frac{eC}{K}
\left(\frac nN\right)^{d_0-d}n^{-\varepsilon}
<1.
\]
The base is smaller than one and $t\ge Kn^{d+\varepsilon}$, hence
\begin{align}
\mu_N(\mathcal E_{N,n,A_1,\ldots,A_m})
&\le
\exp\!\left[-Kn^{d+\varepsilon}
\left(\log\frac K{eC}
+(d_0-d)\log\frac Nn
+\varepsilon\log n\right)\right].
\label{eq:thinning-bad-rectangle-measure}
\end{align}

\proofstep{5}{Counting all rectangles}
For $1\le n\le N/2$,
\[
\sum_{r=0}^n\binom Nr\le\left(\frac{eN}{n}\right)^n,
\]
whereas, for $N/2<n\le N$,
\[
\sum_{r=0}^n\binom Nr\le2^N\le2^{2n}
\le\exp\!\left(2n\log\frac{eN}{n}\right).
\]
Thus, for every $1\le n\le N$, the number of $m$-fold rectangles under consideration is at most
\begin{equation}\label{eq:thinning-rectangle-count}
\exp\!\left(2mn\log\frac{eN}{n}\right).
\end{equation}
Let $\mathcal E_{N,\varepsilon}$ be the union of all the sets
$\mathcal E_{N,n,A_1,\ldots,A_m}$ over $1\le n\le N$ and all admissible coordinate sets. From finite subadditivity, \eqref{eq:thinning-bad-rectangle-measure}, and \eqref{eq:thinning-rectangle-count},
\begin{align}
\mu_N(\mathcal E_{N,\varepsilon})
\le\sum_{n=1}^N\exp\!\Bigg[&2mn\log\frac{eN}{n}
-Kn^{d+\varepsilon}\log\frac K{eC}\\
&-K(d_0-d)n^{d+\varepsilon}\log\frac Nn
-K\varepsilon n^{d+\varepsilon}\log n\Bigg].
\label{eq:thinning-total-bad-measure}
\end{align}

\proofstep{6}{Vanishing of the bad-set measure}
We verify directly that the right-hand side of \eqref{eq:thinning-total-bad-measure} tends to zero. First let $1\le n\le N/2$. Since
\[
\log\frac{eN}{n}=1+\log\frac Nn
\le\left(1+\frac1{\log2}\right)\log\frac Nn,
\]
we may discard the second and fourth negative terms in the exponent and use \eqref{eq:thinning-K-choice} together with $n\le n^{d+\varepsilon}$ to obtain
\[
2mn\log\frac{eN}{n}
-K(d_0-d)n^{d+\varepsilon}\log\frac Nn
\le-\frac{K(d_0-d)}2n^{d+\varepsilon}\log\frac Nn.
\]
Hence it suffices to show
\begin{equation}\label{eq:thinning-half-sum}
\sum_{n=1}^{\lfloor N/2\rfloor}
\exp\!\left[-\frac{K(d_0-d)}2n^{d+\varepsilon}\log\frac Nn\right]
\longrightarrow0.
\end{equation}
Split this sum at $\sqrt N$. If $1\le n\le\sqrt N$, then
$\log(N/n)\ge\frac12\log N$ and $n^{d+\varepsilon}\ge1$, so this part is bounded by
\[
\sqrt N\,N^{-K(d_0-d)/4}
=N^{(2-K(d_0-d))/4}\longrightarrow0
\]
by \eqref{eq:thinning-K-choice}. If $\sqrt N<n\le N/2$, then
$n^{d+\varepsilon}>N^{(d+\varepsilon)/2}$ and
$\log(N/n)\ge\log2$, so this part is bounded by
\[
N\exp\!\left[-\frac{K(d_0-d)\log2}{2}
N^{(d+\varepsilon)/2}\right]\longrightarrow0.
\]
This proves \eqref{eq:thinning-half-sum}.

Now let $N/2<n\le N$. Since $\log(eN/n)\le\log(2e)$, the exponent in
\eqref{eq:thinning-total-bad-measure} is at most
\[
2m\log(2e)n-K\log\frac K{eC}\,n^{d+\varepsilon}.
\]
Because $d+\varepsilon>1$ and $\log(K/(eC))>0$, this is at most
\[
-\frac K2\log\frac K{eC}\,n^{d+\varepsilon}
\]
for all sufficiently large $N$, uniformly in $n>N/2$. The contribution of this range therefore tends to zero as well. Consequently,
\begin{equation}\label{eq:thinning-bad-measure-zero}
\mu_N(\mathcal E_{N,\varepsilon})\longrightarrow0.
\end{equation}

\proofstep{7}{Existence of a good subset of $B_N$}
By finite subadditivity,
\[
\mu_N\bigl(\Omega_N\setminus(\mathcal A_N\cup\mathcal E_{N,\varepsilon})\bigr)
\ge1-\mu_N(\mathcal A_N)-\mu_N(\mathcal E_{N,\varepsilon})\longrightarrow1.
\]
For every sufficiently large admissible $N$, this measure is positive, so the set being measured is nonempty. Choose
\[
H_N\in\Omega_N\setminus(\mathcal A_N\cup\mathcal E_{N,\varepsilon}).
\]
Then
\begin{equation}\label{eq:thinning-good-size-measure}
|H_N|\ge\frac c2N^d,
\end{equation}
and, whenever $|A_j|\le n\le N$,
\begin{equation}\label{eq:thinning-good-rectangle-measure}
|H_N\cap(A_1\times\cdots\times A_m)|
<(K+1)n^{d+\varepsilon}.
\end{equation}

\proofstep{8}{Simultaneous selection and construction of $\Lambda$}
For every integer $s\ge1$, consider simultaneously
$\varepsilon=1,1/2,\ldots,1/s$. For each fixed $s$, \eqref{eq:thinning-bad-measure-zero} and finite subadditivity give
\[
\mu_N\!\left(
\mathcal A_N\cup\bigcup_{r=1}^s\mathcal E_{N,1/r}
\right)
\le
\mu_N(\mathcal A_N)+\sum_{r=1}^s\mu_N(\mathcal E_{N,1/r})
\longrightarrow0.
\]
Hence, for all sufficiently large admissible $N$, the complement of this union is nonempty.
We may therefore choose increasing admissible integers $N_s$ and sets
$H_s\subseteq B_{N_s}$ such that
\begin{equation}\label{eq:thinning-simultaneous-size}
|H_s|\ge\frac c2N_s^d
\end{equation}
and, for every $r\in\{1,\ldots,s\}$, there is a constant $K_r$ such that
\begin{equation}\label{eq:thinning-simultaneous}
|H_s\cap(A_1\times\cdots\times A_m)|
\le K_r n^{d+1/r}
\end{equation}
whenever $|A_j|\le n\le N_s$.

For each $j$, choose finite sets $I_{j,s}\subset\mathbb{N}$, pairwise disjoint as $s$ varies, with
$|I_{j,s}|=|\pi_j(B_{N_s})|$. Reindex the $j$th coordinates of $H_s$ bijectively from
$\pi_j(B_{N_s})$ onto $I_{j,s}$, denote the resulting set by $\widetilde H_s$, and set
\[
\Lambda:=\bigcup_{s=1}^\infty\widetilde H_s.
\]

\proofstep{9}{Computation of the combinatorial dimension}
Since
\[
|\widetilde H_s|\ge\frac c2N_s^d,
\qquad |\pi_j(\widetilde H_s)|\le N_s,
\]
the definition of combinatorial dimension gives $\dim(\Lambda)\ge d$.

For the converse, fix $r\ge1$ and finite sets $A_j\subset\mathbb{N}$ with $|A_j|\le n$. Since the sets $I_{j,s}$ are pairwise disjoint in $s$ for each fixed $j$,
\begin{equation}\label{eq:thinning-budget}
\sum_s\max_{1\le j\le m}|A_j\cap I_{j,s}|
\le\sum_{j=1}^m\sum_s|A_j\cap I_{j,s}|
\le\sum_{j=1}^m|A_j|\le mn.
\end{equation}
The finitely many blocks with $s<r$ contribute at most
$\sum_{s<r}|\widetilde H_s|$. For $s\ge r$, the transported form of
\eqref{eq:thinning-simultaneous} gives
\[
|\widetilde H_s\cap(A_1\times\cdots\times A_m)|
\le K_r\left(\max_{1\le j\le m}|A_j\cap I_{j,s}|\right)^{d+1/r}.
\]
Because $d+1/r>1$, nonnegative numbers satisfy
$\sum_s b_s^{d+1/r}\le(\sum_s b_s)^{d+1/r}$. Hence, by
\eqref{eq:thinning-budget},
\[
|\Lambda\cap(A_1\times\cdots\times A_m)|
\le\sum_{s<r}|\widetilde H_s|+K_r(mn)^{d+1/r}.
\]
Thus $\dim(\Lambda)\le d+1/r$. Since this holds for every $r\ge1$,
$\dim(\Lambda)\le d$, and therefore $\dim(\Lambda)=d$.

\proofstep{10}{Preservation of the product estimate}
For each $s$, the set $\widetilde H_s$ is a coordinatewise reindexed copy of a subset of
$\Lambda_0$. Therefore
\[
\sum_{\mathbf i\in\widetilde H_s}\prod_{j=1}^m|x^{(j)}_{i_j}|
\le C\prod_{j=1}^m
\left(\sum_{i\in I_{j,s}}|x_i^{(j)}|^{m/d_0}\right)^{d_0/m}.
\]
Summing in $s$ and applying H\"older's inequality with exponent $m$ in the index $s$ gives
\[
\sum_{\mathbf i\in\Lambda}\prod_{j=1}^m|x^{(j)}_{i_j}|
\le C\prod_{j=1}^m
\left[\sum_s
\left(\sum_{i\in I_{j,s}}|x_i^{(j)}|^{m/d_0}\right)^{d_0}
\right]^{1/m}.
\]
Since $d_0>1$, the $\ell_{d_0}$-norm of a nonnegative sequence does not exceed its $\ell_1$-norm. Consequently,
\[
\left[\sum_s
\left(\sum_{i\in I_{j,s}}|x_i^{(j)}|^{m/d_0}\right)^{d_0}
\right]^{1/m}
\le
\left[\sum_s\sum_{i\in I_{j,s}}|x_i^{(j)}|^{m/d_0}\right]^{d_0/m}
\le\|x^{(j)}\|_{\ell_{m/d_0}}.
\]
It follows that
\[
\sum_{\mathbf i\in\Lambda}\prod_{j=1}^m|x^{(j)}_{i_j}|
\le C\prod_{j=1}^m\|x^{(j)}\|_{\ell_{m/d_0}},
\]
which is \eqref{eq:thinning-conclusion-product}.
}
\end{proof}

\begin{corollary}\label{cor:initial-plateau}
For every $m\ge2$,
\begin{equation}
\label{eq:initial-plateau}\mathrm{prod}(m,d)=m-1 \qquad\left(
1<d\le\frac{m}{m-1}\right).
\end{equation}

\end{corollary}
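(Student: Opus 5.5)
Upper bound: for $1<d\le m/(m-1)$ we have $\lceil d\rceil=2$, so Proposition~\ref{prop:general-section-obstruction} gives $\mathrm{prod}(m,d)\le m-\lceil d\rceil+1=m-1$.

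Lower bound: we build a host support of dimension $d_0=m/(m-1)$ and then thin it with Lemma~\ref{lem:dimension-thinning}.

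\emph{Host support.} Apply Theorem~\ref{thm:fusion-frame-product} with $D=m$ and $k=m-1$. We need an equal-rank tight fusion frame of $m$ hyperplanes in $\mathbb R^m$. Taking orthogonal complements, as in Step~1 of that proof, this is the same as $m$ lines in $\mathbb R^m$ whose projections sum to a multiple of $I_m$. The coordinate axes $\mathbb R e_1,\ldots,\mathbb R e_m$ work, since their projections sum to $I_m$. So the hyperplanes $e_j^\perp$ form the required frame.

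The theorem then gives a support $\Lambda_0$ with:
\begin{itemize}
\item $\dim(\Lambda_0)=m/(m-1)$;
\item the product estimate \eqref{eq:fusion-product-estimate} with exponent $mk/D=m-1$, which is exactly $m/d_0$ for $d_0=m/(m-1)$;
\item blocks $B_N$ satisfying \eqref{eq:fusion-host-blocks}.
\end{itemize}

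A concrete model avoids the fusion-frame machinery. Take $\ell=m$ and $S_j=[m]\setminus\{j\}$. This family is uniformly $(m-1)$-incident with $|S_j|=m-1$. By \eqref{eq:uniform-rho} its dimension is $m/(m-1)$, and by Proposition~\ref{prop:fractional-cartesian-product-region} it satisfies the $\ell_{m-1}$ product estimate. For the blocks, let $B_N$ be the image of $[n]^m$ with $n=\lfloor N^{1/(m-1)}\rfloor$. Each projection of $B_N$ has $n^{m-1}\le N$ points, and $|B_N|=n^m\gtrsim N^{m/(m-1)}$.

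\emph{Case $d=m/(m-1)$.} The host $\Lambda_0$ itself witnesses $\mathrm{prod}(m,d)\ge m-1$.

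\emph{Case $1<d<m/(m-1)$.} Lemma~\ref{lem:dimension-thinning} requires $1<d<d_0$, which holds. It produces $\Lambda$ with $\dim(\Lambda)=d$ that keeps the $\ell_{m/d_0}=\ell_{m-1}$ product estimate, so again $\mathrm{prod}(m,d)\ge m-1$.

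\emph{Infinitude.} The class $\mathcal C_d$ requires $|\Lambda|=\infty$. In the thinned case this follows because $|H_s|\ge \frac c2 N_s^d\to\infty$. The host $\Lambda_0$ is infinite by construction.

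\emph{Case $m=2$.} Here $d_0=2=m$, so $\Lambda_0=\mathbb N^2$ (the full product). The claim becomes $\mathrm{prod}(2,d)=1$ for $1<d\le2$. The $\ell_1$ product estimate on $\mathbb N^2$ is trivial, and Lemma~\ref{lem:dimension-thinning} still applies for $d<2$.

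The only real obstacle is verifying the hypotheses of Lemma~\ref{lem:dimension-thinning}: that the host exponent equals exactly $m/d_0=m-1$, and that the block condition \eqref{eq:thinning-host-blocks} holds. Both are supplied directly by Theorem~\ref{thm:fusion-frame-product} or by the explicit model above.
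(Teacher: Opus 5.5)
Your proof is correct and follows the paper's argument. You feed the coordinate-hyperplane tight fusion frame $e_j^\perp$ into Theorem~\ref{thm:fusion-frame-product}, thin with Lemma~\ref{lem:dimension-thinning} for $1<d<m/(m-1)$, and use Proposition~\ref{prop:general-section-obstruction} with $\lceil d\rceil=2$ for the upper bound. Your alternative host $S_j=[m]\setminus\{j\}$ is also valid and avoids the HBL/fusion-frame machinery; it is essentially the support the fusion-frame construction produces here, since the kernels $\mathbb{R}e_j$ give the coordinate-deletion maps.
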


\begin{proof}
Let $e_{1},\ldots,e_{m}$ be the standard orthonormal basis of $\mathbb{R}^{m}$
and put $L_{j}=e_{j}^{\perp}$. Then
\[
\sum_{j=1}^{m}P_{L_{j}}=(m-1)I_{m},
\]
so $L_{1},\ldots,L_{m}$ form an equal-rank tight fusion frame with
$(D,k)=(m,m-1)$. Theorem~\ref{thm:fusion-frame-product} gives a support of
dimension
\[
d_{0}=\frac{m}{m-1}
\]
with product exponent $m-1$. The proof of that theorem supplies blocks
satisfying \eqref{eq:thinning-host-blocks}, so
Lemma~\ref{lem:dimension-thinning} gives the same exponent at every
$1<d<d_{0}$. At $d=d_{0}$, Theorem~\ref{thm:fusion-frame-product} applies
directly. Proposition~\ref{prop:general-section-obstruction}, with $r=m-1$,
gives the reverse inequality for every $d>1$.
\end{proof}

By the sufficient part of \cite[Theorem~1]{CFMWZ}, if $2k<D$, $k\nmid D$, and
\[
m\ge\left\lceil \frac Dk\right\rceil +2,
\]
then an equal-rank tight fusion frame with parameters $(m,k,D)$ exists; if
$k\mid D$, existence already follows from $m\ge D/k$. Their construction is formulated over complex Euclidean space. Realifying a
complex tight fusion frame replaces $(k,D)$ by $(2k,2D)$ and produces a real
tight fusion frame with the same ratio $D/k$.

\begin{theorem}\label{thm:curve-above-two}
Let $m\ge5$. Then
\begin{equation}
\label{eq:curve-above-two}
\mathrm{prod}(m,d)=\frac md
\qquad\left(2<d<\frac m2\right).
\end{equation}
\end{theorem}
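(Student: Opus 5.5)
The upper bound $\mathrm{prod}(m,d)\le m/d$ is Proposition~\ref{prop:gamma-prod-upper}, so only the lower bound needs proof. The plan is to realize, for every $d\in(2,m/2)$, a host support of some rational dimension $d_0=D/k$ slightly larger than $d$, with exponent $mk/D$ arbitrarily close to $m/d$, and then thin it by Lemma~\ref{lem:dimension-thinning}. This gives $\mathrm{prod}(m,d)\ge m/d_0$ for every such $d_0$. Letting $d_0\downarrow d$ through rationals gives $\mathrm{prod}(m,d)\ge m/d$, since $\mathrm{prod}$ is a supremum. At rational $d$ one can also take $d_0=d$ directly from Theorem~\ref{thm:fusion-frame-product}.

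First I will check the existence of real equal-rank tight fusion frames with ratio $D/k=d_0$ for all rationals $d_0\in(2,m/2)$ close to $d$. Write $d_0=D/k$ with $2k<D$, which holds because $d_0>2$.
\begin{itemize}
\item If $k\mid D$, then $d_0$ is an integer and $m\ge D/k$ suffices.
\item If $k\nmid D$, the cited sufficient condition from \cite{CFMWZ} is $m\ge\lceil D/k\rceil+2$. This holds because $d_0<m/2$ forces $\lceil d_0\rceil+2\le m/2+3\le m$ whenever $m\ge6$.
\end{itemize}
The case $m=5$ needs separate care. There $d_0<5/2$ gives $\lceil d_0\rceil=3$, so the condition is exactly $m\ge5$ and holds. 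More generally, $\lceil d_0\rceil\le\lceil m/2\rceil$, and $\lceil m/2\rceil+2\le m$ for $m\ge5$, which is precisely where the hypothesis $m\ge5$ enters.

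Those frames are complex. Realifying them replaces $(k,D)$ by $(2k,2D)$, keeps the ratio $d_0$, and still satisfies $1\le 2k\le 2D\le m\cdot 2k$. Theorem~\ref{thm:fusion-frame-product} then gives a support $\Lambda_0$ with $\dim\Lambda_0=d_0$, the product estimate with exponent $mk/D=m/d_0$ and constant $1$, and host blocks $B_N$ satisfying \eqref{eq:fusion-host-blocks}. These are exactly the hypotheses \eqref{eq:thinning-host-product} and \eqref{eq:thinning-host-blocks} of Lemma~\ref{lem:dimension-thinning}, with $d_0>1$.

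Applying Lemma~\ref{lem:dimension-thinning} with $1<d<d_0$ produces $\Lambda$ with $\dim\Lambda=d$ that is $(m/d_0)$-product summable. Hence $\mathrm{prod}(m,d)\ge m/d_0$, and letting $d_0\to d^+$ along rationals in $(d,m/2)$ finishes the argument.

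The main obstacle is the bookkeeping in the existence step. I must verify that the hypotheses of the cited existence theorem, namely $2k<D$, $k\nmid D$ and the bound on $m$, hold uniformly for all rationals in a right neighborhood of $d$ inside $(2,m/2)$. I must also confirm that realification preserves the tight-frame identity \eqref{eq:TFF-tight} with the same constant $mk/D$. Both seem routine: the real part of an orthogonal projection on $\mathbb C^D\cong\mathbb R^{2D}$ is an orthogonal projection of rank $2k$.
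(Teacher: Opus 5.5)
Your proposal is correct and follows the paper's proof essentially step for step: pick a rational $d_0=D/k\in(d,m/2)$, obtain a complex tight fusion frame from \cite{CFMWZ} and realify it, apply Theorem~\ref{thm:fusion-frame-product} and then Lemma~\ref{lem:dimension-thinning}, and finally let $d_0\downarrow d$. The only difference is cosmetic: the paper fixes an auxiliary $q<m/d$, takes $d_0<\min\{m/2,m/q\}$ and uses $\|x\|_{\ell_{m/d_0}}\le\|x\|_{\ell_q}$ to show that $q$ is admissible, whereas you use directly that $m/d_0$ lies in the set whose supremum defines $\mathrm{prod}(m,d)$.
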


\begin{proof}
Fix $2<d<m/2$ and $1\le q<m/d$. Choose a rational number
\[
d<d_{0}<\min\left\{  \frac m2,\frac mq\right\}  ,
\]
and write $d_{0}=D/k$ with positive integers $D,k$. Since $d_{0}>2$, we have
$2k<D$. If $k\mid D$, then $m>d_{0}=D/k$, and \cite[Theorem~1]{CFMWZ} gives
a complex tight fusion frame with parameters $(m,k,D)$. Realifying it gives a
real tight fusion frame with parameters $(m,2k,2D)$ and the same ratio
$D/k=d_0$. If $k\nmid D$, then $m>2d_{0}$ implies
\[
m\ge\lceil d_{0}\rceil+2,
\]
so \cite[Theorem~1]{CFMWZ} again gives an equal-rank tight fusion frame with
parameters $(m,k,D)$ over $\mathbb{C}$. Identify $\mathbb{C}^{D}$ with
$\mathbb{R}^{2D}$. Under this identification a complex $k$-dimensional
subspace becomes a real $2k$-dimensional subspace, and its complex orthogonal
projection becomes the corresponding real orthogonal projection. Thus the
tight-frame identity is preserved, giving a real tight fusion frame with
parameters $(m,2k,2D)$ and the same ratio $D/k=d_{0}$.
Theorem~\ref{thm:fusion-frame-product} therefore produces a support
$\Lambda_{0}$ of dimension $d_{0}$ satisfying the product estimate with
exponent
\[
\frac{m}{d_{0}}>q.
\]
By \eqref{eq:fusion-host-blocks}, this support satisfies the block hypothesis
of Lemma~\ref{lem:dimension-thinning}. Hence there is a support of dimension
exactly $d$ with the same product estimate. Since $q<m/d_{0}$, one has $\ell
_{q}\subset\ell_{m/d_{0}}$ and $\|x\|_{\ell_{m/d_0}}\le\|x\|_{\ell_q}$ for finitely
supported sequences. Hence the exponent $q$ is admissible. Letting $q\uparrow
m/d$ gives
\[
\mathrm{prod}(m,d)\ge\frac md.
\]
The reverse inequality is Proposition~\ref{prop:gamma-prod-upper}.
\end{proof}

\subsection{Dimensions below two}\label{sec:sparse-below-two}

For $m/(m-1)<d<2$, we combine the Vandermonde construction of Lemma~\ref{lem:Vandermonde-HBL} with a sparse
block selection. The summation step uses standard Lorentz-space machinery; see
Lorentz \cite{Lorentz1950} and, for multilinear H\"older inequalities on
Lorentz sequence spaces, Carando--Dimant--Sevilla-Peris
\cite{CarandoDimantSevilla}. The additional estimate required here is the
simultaneous rectangle control for the selected blocks.

\begin{definition}[\cite{Lorentz1950}]\label{def:lorentz}
Let $1<r<\infty$ and $x\in c_{00}$. If
$x^*=(x_n^*)_{n\ge1}$ is the nonincreasing rearrangement of $(|x_n|)$, set
\[
 \|x\|_{\ell_{r,1}}:=\sum_{n\ge1}n^{1/r-1}x_n^*.
\]
\end{definition}

For $1\le q<r$, the estimate
$x_n^*\le n^{-1/q}\|x\|_{\ell_q}$ gives
\begin{equation}\label{eq:lorentz-embedding}
 \|x\|_{\ell_{r,1}}
 \le C_{q,r}\|x\|_{\ell_q},\qquad
 C_{q,r}:=\sum_{n\ge1}n^{1/r-1-1/q}<\infty.
\end{equation}

The restricted-type estimate below is the standard dyadic Lorentz bound; compare \cite{Lorentz1950,CarandoDimantSevilla}.

\begin{lemma}\label{lem:restricted-lorentz}
Let $H\subset
\mathbb{N}^{m}$ be finite, let $0<\alpha<1$, and write
$r_\alpha:=1/\alpha$. Assume
that
\begin{equation}
\label{eq:restricted-rectangle}|H\cap(A_{1}\times\cdots\times A_{m})| \le
K\prod_{j=1}^{m} |A_{j}|^{\alpha}%
\end{equation}
for all finite sets $A_{j}\subset\mathbb{N}$. Then there is a constant
$C_{m,\alpha}$, independent of $H$ and of the ambient dimension, such that
\begin{equation}
\label{eq:restricted-lorentz-conclusion}\sum_{(i_{1},\ldots,i_{m})\in H}%
\prod_{j=1}^{m} |x^{(j)}_{i_{j}}| \le C_{m,\alpha}K\prod_{j=1}^{m}%
\|x^{(j)}\|_{\ell_{r_\alpha,1}}%
\end{equation}
for all $x^{(1)},\ldots,x^{(m)}\in c_{00}$.
\end{lemma}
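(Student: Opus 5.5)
The plan is to reduce to nonnegative sequences, decompose each $x^{(j)}$ dyadically by level sets, and apply the rectangle hypothesis \eqref{eq:restricted-rectangle} to each tuple of level sets.

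\emph{Reduction and layer-cake decomposition.} By homogeneity we may assume every $x^{(j)}$ is nonnegative. Write the nonincreasing rearrangement of $x^{(j)}$ and set
\[
 x^{(j)}\le\sum_{k\ge0}2\,x^{(j)*}_{2^k}\,\mathbf 1_{E_{j,k}},
\]
where $E_{j,k}$ is the set of indices carrying the entries of ranks $2^k,\ldots,2^{k+1}-1$ (a partition of the support of $x^{(j)}$). Then $|E_{j,k}|\le 2^k$. Alternatively, I could use the exact layer-cake formula $x=\int_0^\infty\mathbf 1_{\{x>t\}}\,dt$, which gives $\|x\|_{\ell_{r,1}}\asymp\int_0^\infty|\{x>t\}|^{1/r}\,dt$. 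That route is cleaner because it avoids any summability issue in $k$.

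\emph{Applying the hypothesis.} With the layer-cake formula in each variable, multilinearity and positivity give
\[
 \sum_{\mathbf i\in H}\prod_j x^{(j)}_{i_j}
 =\int_{(0,\infty)^m}|H\cap(\{x^{(1)}>t_1\}\times\cdots\times\{x^{(m)}>t_m\})|\,dt.
\]
Bounding the integrand by $K\prod_j|\{x^{(j)}>t_j\}|^{\alpha}$, the integral factorizes as
\[
 K\prod_j\int_0^\infty|\{x^{(j)}>t\}|^{\alpha}\,dt .
\]

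\emph{Identifying the Lorentz norm.} It remains to check the one-variable identity
\[
 \int_0^\infty|\{x>t\}|^{\alpha}\,dt=\sum_n\bigl(n^\alpha-(n-1)^\alpha\bigr)x_n^*.
\]
This follows by summation by parts on the step function $t\mapsto|\{x>t\}|$, which equals $n$ for $x^*_{n+1}\le t<x^*_n$. Since $n^\alpha-(n-1)^\alpha\le \alpha (n-1)^{\alpha-1}$ for $n\ge2$ and equals $1$ for $n=1$, this quantity is bounded by $C_\alpha\sum_n n^{\alpha-1}x_n^*=C_\alpha\|x\|_{\ell_{r_\alpha,1}}$. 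The constant is then $C_{m,\alpha}=C_\alpha^m$, which depends only on $m$ and $\alpha$.

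\emph{Main obstacle.} There is no serious obstacle. The only points needing care are the rigorous justification of interchanging sum and integral, which holds since everything is finite and nonnegative, and the elementary inequality relating $n^\alpha-(n-1)^\alpha$ to $n^{\alpha-1}$ with a uniform constant.
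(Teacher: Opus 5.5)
Your proof is correct and takes the same approach as the paper. Both arguments decompose each sequence into level sets, apply the rectangle hypothesis to products of level sets, factorize, and finish with the summation-by-parts identity $\int_0^\infty D(t)^\alpha\,dt=\sum_n\bigl(n^\alpha-(n-1)^\alpha\bigr)x_n^*$ plus a mean-value bound. The one difference is that the paper discretizes dyadically in value, using $A_{j,k}=\{i:2^{-(k+1)}<x^{(j)}_i\le 2^{-k}\}$, and then bounds $\sum_k2^{-k}|A_{j,k}|^\alpha$ by $4\int_0^\infty D_j(t)^\alpha\,dt$; your continuous layer-cake reaches that integral directly as an identity and so avoids the extra constant factors.
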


\begin{proof}
Replacing each sequence by its modulus, assume that all entries are
nonnegative. For $k\in\mathbb{Z}$ and $j=1,\ldots,m$, set
\[
A_{j,k}:=\{i:2^{-(k+1)}<x^{(j)}_{i}\le2^{-k}\}.
\]
The sets $A_{j,k}$ are pairwise disjoint in $k$. Decomposing the sum according
to the $m$ dyadic levels and applying \eqref{eq:restricted-rectangle} gives
\begin{align}
\sum_{\mathbf{i}\in H}\prod_{j=1}^{m} x^{(j)}_{i_{j}}  &  \le\sum
_{k_{1},\ldots,k_{m}\in\mathbb{Z}} 2^{-k_{1}-\cdots-k_{m}} |H\cap(A_{1,k_{1}%
}\times\cdots\times A_{m,k_{m}})|\\
&  \le K\prod_{j=1}^{m} \left(  \sum_{k\in\mathbb{Z}}2^{-k}|A_{j,k}|^{\alpha
}\right).\label{eq:restricted-lorentz-factorized}
\end{align}
Fix $j$ and let
\[
 {
D_j:(0,\infty)\longrightarrow\mathbb N_0,\qquad
D_j(t):=\bigl|\{i:|x^{(j)}_i|>t\}\bigr|
}
\]
be the distribution function of $x^{(j)}$. Since
$A_{j,k}\subset\{i:|x^{(j)}_i|>2^{-(k+1)}\}$, we have
\[
 |A_{j,k}|^\alpha\le D_j(2^{-(k+1)})^\alpha.
\]
The distribution function $D_j$ is decreasing. Therefore, for
$2^{-(k+2)}<t\le2^{-(k+1)}$,
$D_j(t)\ge D_j(2^{-(k+1)})$, and hence
\[
 2^{-k}|A_{j,k}|^\alpha
 \le 4\int_{2^{-(k+2)}}^{2^{-(k+1)}}D_j(t)^\alpha\,dt.
\]
The intervals on the right are pairwise disjoint, so
\begin{equation}\label{eq:lorentz-distribution-bound}
 \sum_{k\in\mathbb{Z}}2^{-k}|A_{j,k}|^\alpha
 \le4\int_0^\infty D_j(t)^\alpha\,dt.
\end{equation}
The last integral is compared with the norm in Definition~\ref{def:lorentz}. Since $x^{(j)}$ is finitely supported and
$x^{(j)*}$ is decreasing, writing $x^{(j)*}_{n}=0$ beyond the support gives
\begin{align*}
 \int_0^\infty D_j(t)^\alpha\,dt
 &=\sum_{n\ge1} n^\alpha
   \bigl(x^{(j)*}_n-x^{(j)*}_{n+1}\bigr)\\
 &=\sum_{n\ge1}\bigl(n^\alpha-(n-1)^\alpha\bigr)x^{(j)*}_n.
\end{align*}
By the mean value theorem,
$n^\alpha-(n-1)^\alpha\le C_\alpha n^{\alpha-1}$ for $n\ge1$.
Consequently,
\[
 \int_0^\infty D_j(t)^\alpha\,dt
 \le C_\alpha\sum_{n\ge1}n^{\alpha-1}x^{(j)*}_n
 =C_\alpha\|x^{(j)}\|_{\ell_{r_\alpha,1}},
\]
because $\alpha=1/r_\alpha$. Combining this estimate with
\eqref{eq:lorentz-distribution-bound} for each $j$ in \eqref{eq:restricted-lorentz-factorized} proves
\eqref{eq:restricted-lorentz-conclusion}.
\end{proof}

\subsubsection*{Sparse Vandermonde blocks}

\begin{lemma}\label{lem:vandermonde-sparse-strip}
Let $m\ge3$
and
\[
\frac{m}{m-1}<d<2.
\]
Fix $\alpha$ with
\begin{equation}
\label{eq:alpha-strip-range}\frac dm<\alpha<\frac2m.
\end{equation}
Choose distinct integers $a_{1},\ldots,a_{m}$ and define
\[
\phi_j:\mathbb{Z}^2\to\mathbb{Z},\qquad \phi_j(u,v):=u+a_j v,
\]
and define
\[
\Theta:\mathbb{Z}^2\longrightarrow\mathbb{Z}^m,\qquad
\Theta(u,v):=(\phi_1(u,v),\ldots,\phi_m(u,v)).
\]
For $N\ge1$, put
\[
Q_{N}:=\{1,\ldots,N\}^{2}, \qquad V_{N}:=\Theta(Q_{N})\subset\mathbb{Z}^{m}.
\]
Then there are constants $c>0$ and $K_{\alpha}>0$ such that, for all
sufficiently large $N$, one can find $H_{N}\subset V_{N}$ satisfying
\begin{equation}
\label{eq:vander-strip-size}|H_{N}|\ge cN^{d}%
\end{equation}
and
\begin{equation}
\label{eq:vander-strip-restricted}|H_{N}\cap(A_{1}\times\cdots\times A_{m})|
\le K_{\alpha}\prod_{j=1}^{m}|A_{j}|^{\alpha}%
\end{equation}
for all finite sets $A_{j}\subset\mathbb{Z}$. Moreover, for every finite set
$E\subset(0,\infty)$, the same $H_{N}$ may be chosen so that, for each
$\varepsilon\in E$,
\begin{equation}
\label{eq:vander-strip-dimension-control}|H_{N}\cap(A_{1}\times\cdots\times
A_{m})| \le K_{\varepsilon}n^{d+\varepsilon}%
\end{equation}
whenever $|A_{j}|\le n$ for all $j$, where $K_{\varepsilon}$ is independent of
$N$.
\end{lemma}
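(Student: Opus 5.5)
Our plan is a random Bernoulli selection inside $V_N$, as in Lemma~\ref{lem:dimension-thinning}. The new ingredient is a uniform rectangle estimate of product type, $K_\alpha\prod_j|A_j|^\alpha$, instead of the isotropic bound $n^{d+\varepsilon}$.

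We retain each point of $V_N$ independently with probability $\rho:=N^{d-2}$. Since $\Theta$ is injective on $Q_N$, $|V_N|=N^2$. The Chernoff-type argument of Step~2 in Lemma~\ref{lem:dimension-thinning} then gives $|H_N|\ge\frac12N^d$ with probability tending to one.

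The host estimates come from Lemma~\ref{lem:Vandermonde-HBL} with $d=2$. For rectangles it gives
\[
|V_N\cap(A_1\times\cdots\times A_m)|\le\prod_j|A_j|^{2/m}.
\]
For a single pair of coordinates, two values $\phi_j,\phi_k$ with $j\ne k$ determine $(u,v)$. Hence
\[
|V_N\cap(A_1\times\cdots\times A_m)|\le\min_{j\ne k}|A_j||A_k|,
\]
and in particular the count is at most the product of the two smallest $|A_j|$.

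Write $a_j=|A_j|$ and $P=\prod_ja_j$. The target bound is $K_\alpha P^\alpha$. The expected count is at most
\[
\mu=N^{d-2}\min\{P^{2/m},a_{(1)}a_{(2)}\},
\]
where $a_{(1)}\le a_{(2)}$ are the two smallest sizes. We bound the bad event
\[
\{X\ge t\},\qquad t:=\lceil K P^{\alpha}\rceil,
\]
using $\Pr[X\ge t]\le(e\mu'/t)^t$, where $\mu'=\rho\,|V_N\cap\prod A_j|$. This is the binomial estimate of Step~4 in Lemma~\ref{lem:dimension-thinning}.

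Only the sets $A_j\subset\phi_j(Q_N)$ matter, and each such set has at most $(|a_j|+1)N$ elements, so $a_j\le CN$. Grouping rectangles by dyadic sizes $a_j\approx2^{k_j}$, the number of rectangles in a class is at most $\exp\bigl(\sum_j a_j\log(eCN/a_j)\bigr)$.

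The union bound succeeds once
\[
t\log\frac{t}{e\mu'}\gg\sum_ja_j\log N.
\]
Note that $t\ge K$ always, and $t\ge KP^\alpha\ge K\,a_{(m)}^{\alpha}\cdot(\text{rest})$.

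The main obstacle is that $P^\alpha$ may be much smaller than $\sum_j a_j$, for example when one $A_j$ is huge and the others are tiny. The remedy is to first trim each $A_j$ to the projection $\phi_j(V_N\cap\prod A_i)$. The count $X$ depends only on the trimmed rectangle, so we may assume every element of $A_j$ is hit. With two coordinates determining the point, this gives $a_j\le|V_N\cap\prod A_i|\le a_{(1)}a_{(2)}$ for all $j$.

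Suppose $a_{(1)}$ is large. Then $P^\alpha\ge a_{(1)}^{m\alpha}$ with $m\alpha>d>1$, and this exceeds $a_{(1)}a_{(2)}\ge\max_j a_j$ up to powers, which we compare using $\alpha>d/m$. Suppose instead $a_{(1)}$ is bounded. Then the rectangle count is small, and the probability that $t$ points survive is at most $(\rho\, a_{(1)}a_{(2)})^t$.

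We then split into two regimes.
\begin{enumerate}
\item If $\mu'\le N^{-\delta}t$, each bad event has probability at most $N^{-\delta t}$, and $t$ beats the entropy $\sum_j a_j\log N$ by the comparison just described.
\item Otherwise $\mu'\gtrsim P^{\alpha}$. This forces $N^{d-2}P^{2/m}\gtrsim P^\alpha$, that is, $P^{2/m-\alpha}\gtrsim N^{2-d}$. Since $\alpha<2/m$ we have $P\le (CN)^{m}$, and this regime only occurs if $\alpha\le d/m$. That contradicts the choice of $\alpha$ in \eqref{eq:alpha-strip-range}, after absorbing the constant into $K_\alpha$.
\end{enumerate}

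I expect this entropy-versus-deviation bookkeeping, over rectangles with very unbalanced side lengths, to be the crux. I would handle it by trimming and the case split on $a_{(1)}$, as above.

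Finally, the dimension control \eqref{eq:vander-strip-dimension-control} for the finitely many $\varepsilon\in E$ is exactly Steps~3--6 of Lemma~\ref{lem:dimension-thinning}. We apply them with $d_0=2$, host constant $C=1$, and host blocks $V_N$, using $|\pi_j(V_N)|\le CN$. Because $E$ is finite, the union of all bad events still has probability tending to zero. For large $N$ some $H_N$ avoids all of them, and this $H_N$ satisfies \eqref{eq:vander-strip-size}, \eqref{eq:vander-strip-restricted}, and \eqref{eq:vander-strip-dimension-control} simultaneously.
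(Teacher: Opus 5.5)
Your size estimate is fine. So is your observation that the second regime is empty; it matches the paper's estimate $e\rho_Nb_1b_2/t\lesssim K^{-1}N^{d-m\alpha}$. The dimension control via the argument of Lemma~\ref{lem:dimension-thinning} also works, provided you additionally treat $N<n\le C_0N$, since only $|\pi_j(V_N)|\le C_0N$ is available.

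The gap is in \eqref{eq:vander-strip-restricted}. Your union bound counts $\prod_j\binom{C_0N}{a_j}$ rectangles per size class, so it needs $t=\lceil KP^\alpha\rceil\gtrsim\sum_ja_j$ even after trimming. The comparison you offer for this is false: with all sides equal to $y$, $P^\alpha=y^{m\alpha}<y^2=a_{(1)}a_{(2)}$, because $\alpha<2/m$.

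Worse, the inequality you actually need, $P^\alpha\gtrsim\max_ja_j$ for trimmed rectangles, fails when $m\ge4$. Take a lattice triangle $\{p,q,r\}$ with $q-p\in\ker\phi_1$, $r-q\in\ker\phi_2$ and $r-p\in\ker\phi_3$. Let $S_k$ consist of all $\sum_{i<k}g_iM^i$ with $g_i\in\{p,q,r\}$, where $M$ is a large fixed integer, and translate $S_k$ into $Q_N$. Then $\phi_1,\phi_2,\phi_3$ take $2^k$ values on $S_k$ and every other $\phi_j$ takes $3^k$ values. So the already trimmed rectangle $\prod_j\phi_j(S_k)$ has $P^\alpha=(8\cdot3^{m-3})^{\alpha k}=o(3^k)$ whenever $\alpha<\log3/\log(8\cdot3^{m-3})$. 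Since $8<9$, this threshold exceeds $1/(m-1)$, so it is compatible with $\alpha>d/m$ for $d$ near $m/(m-1)$.

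For that size class the entropy is $\gtrsim3^k\log N$, while the tail exponent is only $O(t\log N)=o(3^k)\log N$, so your union bound gives nothing. Your case split on $a_{(1)}$ does not rescue it, since here $a_{(1)}=2^k$ is large.

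The missing idea is that $A_3,\ldots,A_m$ never need to be counted.
\begin{itemize}
\item Order the sizes $b_1\le\cdots\le b_m$. Any two coordinates determine a point of $V_N$, so $H\cap(A_1\times\cdots\times A_m)\subset H\cap S(A_1,A_2)$.
\item Here $S(A_1,A_2):=\{z\in V_N:\pi_1(z)\in A_1,\ \pi_2(z)\in A_2\}$ has at most $b_1b_2\le P^{2/m}$ points.
\item The threshold $t=\lceil KP^\alpha\rceil$ depends only on the size vector. Hence, for each size vector and ordering, it suffices to union-bound over the pairs $(A_1,A_2)$.
\item The entropy is then at most $(b_1+b_2)\log(C_0N)$, and each tail is at most $(e\rho_Nb_1b_2/t)^t\le N^{-\delta t/2}$ with $\delta=m\alpha-d$.
\item Moreover $t\ge KP^\alpha\ge Kb_2^{(m-1)\alpha}\ge Kb_2\ge K(b_1+b_2)/2$, because $(m-1)\alpha>(m-1)d/m>1$.
\end{itemize}
This last inequality is exactly where the hypothesis $d>m/(m-1)$ enters, and your proposal never uses it. Summing over at most $m!\,(C_0N)^m$ classes, with $K$ large, finishes the argument.
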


\begin{proof}
{
Any two forms $\phi_j,\phi_k$ are linearly independent because $a_j\ne a_k$.
Hence
\[
(u,v)\longmapsto(\phi_j(u,v),\phi_k(u,v))
\]
is injective whenever $j\ne k$. In particular, $|V_N|=N^2$. There is a
constant $C_0\ge1$, depending only on $a_1,\ldots,a_m$, such that
\begin{equation}
\label{eq:vander-projection-size}
|\pi_j(V_N)|\le C_0N\qquad(j=1,\ldots,m).
\end{equation}

For each $N$, put $\rho_N:=N^{d-2}$ and consider the finite measure space
\[
\Omega_N:=2^{V_N},\qquad \mathcal F_N:=2^{\Omega_N},
\]
with
\[
\mu_N(\{H\})
=\rho_N^{|H|}(1-\rho_N)^{|V_N|-|H|}
\qquad(H\subseteq V_N).
\]
For a family of subsets of $V_N$, we estimate its $\mu_N$-measure;
all estimates below are obtained by summing this measure over explicitly
described exceptional families.

\proofstep{1}{Size}
Exactly as in Lemma~\ref{lem:dimension-thinning}, for every $\lambda>0$,
\[
\int_{\Omega_N}e^{-\lambda|H|}\,d\mu_N(H)
=\bigl(1+\rho_N(e^{-\lambda}-1)\bigr)^{N^2}
\le \exp\!\left(N^d(e^{-\lambda}-1)\right).
\]
Taking $\lambda=\log2$ shows that
\[
\mu_N\!\left(\left\{H:|H|\le\frac12N^d\right\}\right)
\le e^{-N^d/8}\longrightarrow0.
\]
Hence every $H$ outside this exceptional family satisfies
\eqref{eq:vander-strip-size}, after fixing for instance $c=1/2$.

\proofstep{2}{Restricted rectangles}
Fix nonempty sets $A_j\subset\pi_j(V_N)$ and put $b_j:=|A_j|$. Relabel the
coordinates so that
\[
1\le b_1\le b_2\le\cdots\le b_m,\qquad
P:=\prod_{j=1}^m b_j.
\]
A point of $V_N$ is determined by any two of its coordinates. Hence the
cylinder
\[
S(A_1,A_2):=\{z\in V_N:\pi_1(z)\in A_1,\ \pi_2(z)\in A_2\}
\]
has at most $b_1b_2$ points. Let
\[
t:=\lceil KP^\alpha\rceil.
\]
If $t>|S(A_1,A_2)|$, the corresponding exceptional family is empty.
Otherwise, if a subset $H\subseteq V_N$ contains at least $t$ points of this
cylinder, then it contains some fixed $t$-point subset of the cylinder.
Finite subadditivity therefore gives
\[
\mu_N\bigl(\{H:|H\cap S(A_1,A_2)|\ge t\}\bigr)
\le \binom{b_1b_2}{t}\rho_N^t
\le \left(\frac{e\rho_N b_1b_2}{t}\right)^t.
\]
Since $b_1b_2\le P^{2/m}$ and $P\le(C_0N)^m$,
\[
\frac{e\rho_N b_1b_2}{t}
\le \frac{eC_1}{K}N^{d-m\alpha}.
\]
Put $\delta:=m\alpha-d>0$. For all sufficiently large $N$ the last quantity
is at most $N^{-\delta/2}$, so
\begin{equation}
\label{eq:vander-cylinder-tail}
\mu_N\bigl(\{H:|H\cap S(A_1,A_2)|\ge t\}\bigr)
\le N^{-\delta t/2}.
\end{equation}

Because $d>m/(m-1)$ and $\alpha>d/m$, one has $\alpha(m-1)>1$. Also
\[
P\ge b_1b_2^{m-1},
\]
and therefore
\begin{equation}
\label{eq:vander-entropy-budget}
t\ge KP^\alpha\ge Kb_2,\qquad
b_1+b_2\le2b_2\le\frac{2t}{K}.
\end{equation}
For fixed cardinalities $(b_1,\ldots,b_m)$ and a fixed ordering of the
coordinates, only $A_1,A_2$ need to be counted: a violation by the full
rectangle forces the corresponding two-coordinate cylinder to contain at
least $t$ points of $H$. By \eqref{eq:vander-projection-size} and
\eqref{eq:vander-entropy-budget}, the number of choices is at most
\[
(C_0N)^{b_1+b_2}\le N^{3t/K}
\]
for large $N$. There are at most $m!$ coordinate orderings and at most
$(C_0N)^m$ cardinality vectors. Choose $K$ so large that
\[
\frac3K<\frac\delta4,\qquad \frac{\delta K}{4}>m+2.
\]
Finite subadditivity and \eqref{eq:vander-cylinder-tail} then show that the
$\mu_N$-measure of all subsets violating
\eqref{eq:vander-strip-restricted} is bounded by
\[
C_mN^mN^{-\delta K/4}\longrightarrow0.
\]

\proofstep{3}{Uniform dimension control}
Fix $\varepsilon>0$, let $|A_j|\le n$, and set
\[
R:=A_1\times\cdots\times A_m.
\]
Two-coordinate injectivity gives
\begin{equation}
\label{eq:vander-host-rectangle}
|V_N\cap R|\le n^2.
\end{equation}
Let $t_n:=\lceil K_\varepsilon n^{d+\varepsilon}\rceil$. If
$t_n>|V_N\cap R|$, no subset $H\subseteq V_N$ can satisfy $|H\cap R|\ge t_n$, so the exceptional family is empty; otherwise
\[
\mu_N\bigl(\{H:|H\cap R|\ge t_n\}\bigr)
\le
\binom{|V_N\cap R|}{t_n}\rho_N^{t_n}
\le
\left(\frac{eN^{d-2}n^2}{t_n}\right)^{t_n}.
\]
Since $t_n\ge K_\varepsilon n^{d+\varepsilon}$,
\begin{align}
\mu_N\bigl(\{H:|H\cap R|\ge t_n\}\bigr)
\le
\exp\!\left[-K_\varepsilon n^{d+\varepsilon}
\left(
\log\frac{K_\varepsilon}{e}
+(2-d)\log\frac Nn
+\varepsilon\log n
\right)\right].
\label{eq:vander-dimension-tail}
\end{align}
Only subsets of the coordinate projections of $V_N$ matter. Hence, for fixed
$n$, the number of relevant $m$-tuples is at most
\begin{equation}
\label{eq:vander-dimension-count}
\left(\sum_{a=0}^n\binom{\lceil C_0N\rceil}{a}\right)^m
\le
\exp\!\left(C_mn\log\frac{eC_0N}{n}\right).
\end{equation}
For $1\le n\le N/2$, put $L:=\log(N/n)$. Since $L\ge\log2$, there is a constant
$C'_m=C'_m(C_0)$ such that
\[
 C_m n\log\frac{eC_0N}{n}\le C'_m nL.
\]
Choose $K_\varepsilon$ so large that
\[
 K_\varepsilon(2-d)\ge4C'_m.
\]
Because $n^{d+\varepsilon}\ge n$, the contribution of the $(2-d)L$ term in
\eqref{eq:vander-dimension-tail} alone gives
\[
 C'_m nL-K_\varepsilon(2-d)n^{d+\varepsilon}L
 \le-\frac{K_\varepsilon(2-d)}2n^{d+\varepsilon}L.
\]
Thus the sum over $1\le n\le N/2$ is bounded by
\[
 \sum_{n\le N/2}
 \exp\left[-\frac{K_\varepsilon(2-d)}2
 n^{d+\varepsilon}\log\frac Nn\right],
\]
which tends to zero by splitting the sum into $1\le n\le\sqrt N$ and $\sqrt N<n\le N/2$, exactly as in the tail estimate in the proof of Lemma~\ref{lem:dimension-thinning}.

Now let $N/2<n\le C_0N$. In this range
$\log(eC_0N/n)=O_{C_0}(1)$ and
$\log(N/n)\ge-\log C_0$. Hence the bracket in
\eqref{eq:vander-dimension-tail} is bounded below by
\[
 \varepsilon\log n-C_{0,d,K_\varepsilon}
\]
for a constant independent of $N$ and $n$. For all sufficiently large $N$,
uniformly in $n>N/2$, this is at least $(\varepsilon/2)\log n$. The entropy
term is $O_{m,C_0}(n)$, whereas the negative term is then at most
\[
 -\frac{K_\varepsilon\varepsilon}{2}
 n^{d+\varepsilon}\log n.
\]
Since $d+\varepsilon>1$, the term $n^{d+\varepsilon}\log n$ dominates the entropy term $O_{m,C_0}(n)$ uniformly; multiplying by the
$O(N)$ possible values of $n$ still gives a quantity tending to zero. We have
therefore proved
\[
\mu_N(\mathcal D_{N,\varepsilon})\longrightarrow0,
\]
where $\mathcal D_{N,\varepsilon}$ denotes the family of subsets violating
\eqref{eq:vander-strip-dimension-control}. If $n>C_0N$, replace each $A_j$
by $A_j\cap\pi_j(V_N)$; its cardinality is at most $C_0N<n$, so the already
proved estimate at $\lceil C_0N\rceil$ is stronger than the required bound
with $n$.

Finally, let $E\subset(0,\infty)$ be finite. The exceptional family formed by
the size condition, the restricted-rectangle condition, and the families
$\mathcal D_{N,\varepsilon}$ for $\varepsilon\in E$ has $\mu_N$-measure
tending to zero. Its complement is therefore nonempty for every sufficiently
large $N$. Choosing any $H_N$ in this complement gives all the stated
properties simultaneously.
}
\end{proof}

\begin{theorem}\label{thm:remaining-strip}
Let $m\ge3$. If
\[
\frac{m}{m-1}<d<2,
\]
then
\begin{equation}
\label{eq:remaining-strip-value}\mathrm{prod}(m,d)=\frac md.
\end{equation}

\end{theorem}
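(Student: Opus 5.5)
The upper bound $\mathrm{prod}(m,d)\le m/d$ is Proposition~\ref{prop:gamma-prod-upper}, so the whole task is the lower bound. Fix $q$ with $1\le q<m/d$. We will build a support of dimension exactly $d$ that is $q$-product summable; letting $q\uparrow m/d$ then finishes the proof. Choose $\alpha$ with
\[
 \frac dm<\alpha<\min\left\{\frac2m,\frac1q\right\}.
\]
This is possible because $d<2$ and $q<m/d$. Put $r_\alpha=1/\alpha$, so $q<r_\alpha$. For each sufficiently large $N$, Lemma~\ref{lem:vandermonde-sparse-strip} gives a set $H_N\subset V_N$ with the following properties.
\begin{itemize}
\item[(a)] Size: $|H_N|\ge cN^d$.
\item[(b)] Uniform restricted-rectangle bound: $|H_N\cap(A_1\times\cdots\times A_m)|\le K_\alpha\prod_j|A_j|^\alpha$.
\item[(c)] Dimension control \eqref{eq:vander-strip-dimension-control} for every $\varepsilon$ in a prescribed finite set $E$.
\end{itemize}
As in Step~8 of Lemma~\ref{lem:dimension-thinning}, choose increasing $N_s$ and take $E=\{1,1/2,\ldots,1/s\}$ at stage $s$. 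Then reindex each coordinate of $H_{N_s}$ into finite sets $I_{j,s}\subset\mathbb N$, disjoint in $s$, with $|I_{j,s}|=|\pi_j(V_{N_s})|\le C_0N_s$. Let $\Lambda=\bigcup_s\widetilde H_s$.

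The dimension computation follows Step~9 of Lemma~\ref{lem:dimension-thinning} verbatim. For the lower bound, (a) together with $|\pi_j(\widetilde H_s)|\le C_0N_s$ gives $\dim\Lambda\ge d$. For the upper bound, fix $r$. The blocks with $s<r$ contribute a bounded amount. For $s\ge r$, use (c) with $\varepsilon=1/r$, the budget inequality \eqref{eq:thinning-budget}, and superadditivity of $b\mapsto b^{d+1/r}$; this gives $\Lambda(n)\le C_r+K_r(mn)^{d+1/r}$. Hence $\dim\Lambda=d$. The support is infinite by (a).

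For the product estimate on a single block, Lemma~\ref{lem:restricted-lorentz} with (b) gives
\[
 \sum_{\mathbf i\in\widetilde H_s}\prod_j|x^{(j)}_{i_j}|\le C_{m,\alpha}K_\alpha\prod_j\|x^{(j)}|_{I_{j,s}}\|_{\ell_{r_\alpha,1}}.
\]
We then apply \eqref{eq:lorentz-embedding}. It helps to pick an intermediate exponent $q<q'<r_\alpha$ with $q'>m$ impossible in general, so instead fix $q'$ with $q<q'<r_\alpha$. This bounds each factor by $C\|x^{(j)}|_{I_{j,s}}\|_{\ell_{q'}}$, with constants independent of $s$.

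The main obstacle is summing over $s$ without losing summability. After the block bound we have $\sum_s\prod_j y_{j,s}$ with $y_{j,s}=\|x^{(j)}|_{I_{j,s}}\|_{\ell_{q'}}$. H\"older in $s$ with exponent $m$ bounds this by $\prod_j\|(y_{j,s})_s\|_{\ell_m}$. As in Step~10 of Lemma~\ref{lem:dimension-thinning}, we need $\|(y_{j,s})_s\|_{\ell_m}\le\|(y_{j,s})_s\|_{\ell_{q'}}=\|x^{(j)}\|_{\ell_{q'}}$, and this requires $q'\le m$. This holds because $q'<r_\alpha<m/d<m$, since $d>1$.

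Finally $\|x\|_{\ell_{q'}}\le\|x\|_{\ell_q}$, so $\Lambda$ is $q$-product summable. (In fact one can take $q'=q$ directly.) If the constants from the Lemma turned out to depend on $N$, I would recheck that $K_\alpha$ and the Lorentz constant are uniform. The Lemma asserts exactly this, so the argument closes.
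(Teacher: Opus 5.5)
Your proposal is correct and follows essentially the same route as the paper: you choose $d/m<\alpha<\min\{1/q,2/m\}$, take the sparse Vandermonde blocks of Lemma~\ref{lem:vandermonde-sparse-strip} with $E=\{1,1/2,\ldots,1/s\}$ at stage $s$, reindex coordinates into disjoint sets, get $\dim\Lambda=d$ from the budget argument, and finish with the per-block Lorentz bound, H\"older in $s$ with exponent $m$, and $q<m/d<m$. The intermediate exponent $q'$ is unnecessary, since the paper simply takes $q'=q$ as you eventually note, so the garbled sentence about it can be dropped.
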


\begin{proof}
The upper bound is Proposition~\ref{prop:gamma-prod-upper}. Fix $q$ with $1\le
q<m/d$. Since $d<2$, one can choose $\alpha$ such that
\[
\frac dm<\alpha<\min\left\{  \frac1q,\frac2m\right\}.
\]
Put $r_\alpha:=1/\alpha$. Then $q<r_\alpha$. For every $s\ge1$, apply
Lemma~\ref{lem:vandermonde-sparse-strip} with the finite set
\[
\mathcal E_s:=\{1,1/2,\ldots,1/s\}
\]
and choose $N_{s}\uparrow\infty$ and a corresponding block $H_{s}$. Reindex
the coordinate projections of the blocks into pairwise disjoint sets
$I_{j,s}\subset\mathbb{N}$, and let $\widetilde H_{s}$ denote the reindexed
block. Set
\[
\Lambda:=\bigcup_{s=1}^{\infty}\widetilde H_{s}.
\]
For the dimension, \eqref{eq:vander-strip-size} together with the projection
bound \eqref{eq:vander-projection-size} gives $\dim(\Lambda)\ge d$. Fix an
integer $R\ge1$ and sets $A_{j}\subset\mathbb{N}$
with $|A_{j}|\le n$. Define
\[
n_{s}:=\max_{1\le j\le m}|A_{j}\cap I_{j,s}|.
\]
As in \eqref{eq:thinning-budget}, disjointness of the coordinate blocks gives
\[
\sum_{s} n_{s}\le mn.
\]
Set
\[
 M_R:=\sum_{s<R}|\widetilde H_s|,\qquad K_R:=K_{1/R}.
\]
For $s\ge R$, property \eqref{eq:vander-strip-dimension-control} with
$\varepsilon=1/R$ yields
\[
|\widetilde H_{s}\cap(A_{1}\times\cdots\times A_{m})| \le K_{R} n_{s}%
^{d+1/R}.
\]
Therefore
\[
|\Lambda\cap(A_{1}\times\cdots\times A_{m})| \le M_{R}+K_{R}(mn)^{d+1/R}.
\]
It follows that $\dim(\Lambda)\le d+1/R$. Letting $R\to\infty$ gives
$\dim(\Lambda)=d$. For the product estimate,
\eqref{eq:vander-strip-restricted} and Lemma~\ref{lem:restricted-lorentz}
give, on each block,
\[
\sum_{\mathbf{i}\in\widetilde H_{s}}\prod_{j=1}^{m}|x^{(j)}_{i_{j}}| \le
C_{m,\alpha}K_{\alpha}\prod_{j=1}^{m}\|x^{(j)}|_{I_{j,s}}\|_{\ell_{r_\alpha,1}}.
\]
Since $q<r_\alpha$, \eqref{eq:lorentz-embedding} gives
\[
\|x^{(j)}|_{I_{j,s}}\|_{\ell_{r_\alpha,1}} \le C_{q,r_\alpha}\|x^{(j)}|_{I_{j,s}}\|_{\ell_q}.
\]
Put $a_{j,s}:=\|x^{(j)}|_{I_{j,s}}\|_{\ell_q}$. Summing over $s$ and applying
H\"older's inequality to the block index with exponent $m$ gives
\[
\sum_{s}\prod_{j=1}^{m} a_{j,s} \le\prod_{j=1}^{m}\left(  \sum_{s} a_{j,s}%
^{m}\right)  ^{1/m}.
\]
Because $q<m/d<m$, one has
\[
\left(  \sum_{s} a_{j,s}^{m}\right)  ^{1/m} \le\left(  \sum_{s} a_{j,s}%
^{q}\right)  ^{1/q} \le\|x^{(j)}\|_{\ell_q}.
\]
Thus $\Lambda$ satisfies the product estimate with exponent $q$. Since every
$q<m/d$ is admissible,
\[
\mathrm{prod}(m,d)\ge\frac md.
\]
Proposition~\ref{prop:gamma-prod-upper} gives the reverse inequality, and
hence \eqref{eq:remaining-strip-value}.
\end{proof}

\subsection{The high-dimensional range}\label{sec:sparse-high-d}

If
$E\subset[N]^m$, $1\le j\le m$, and $r\in[N]$, write
\begin{equation}\label{eq:def-Delta}
 E_{j,r}:=\{\mathbf i=(i_1,\ldots,i_m)\in E:i_j=r\},
 \qquad
 \Delta(E):=\max_{j,r}|E_{j,r}|.
\end{equation}
Thus $\Delta(E)$ is the largest number of points of $E$ lying in a fiber
obtained by fixing one coordinate.

The high-dimensional range is obtained by Bernoulli thinning of $[N]^m$ at density $N^{d-m}$. Such random constructions originate in Blei--K\"orner \cite{BleiKorner}; see also \cite{BleiPeresSchmerl,BleiGao} for fractional products and prescribed dimensions, and \cite[Section~4.2 and Lemma~4.3]{BayartJEMS} for a variant retaining analytic control after thinning. The finite-block statement below combines the three estimates needed later: cardinality, one-coordinate fibers, and rectangle counts.

\begin{lemma}\label{lem:sparse-random-blocks-general}
Let $m\ge2$ and $1<d<m$. There is a constant $K_0=K_0(m,d)$ such that, for
all sufficiently large $N$, one can find $H_N\subset[N]^m$ satisfying
\begin{align}
 \frac12N^d\le |H_N|&\le2N^d,\label{eq:random-block-size-general}\\
 |H_N\cap(A_1\times\cdots\times A_m)|&\le K_0 n^d
 \label{eq:random-block-dimension-general}
\end{align}
whenever $|A_j|\le n\le N$ for every $j$, and
\begin{equation}\label{eq:random-block-fibers}
 \Delta(H_N)\le K_0N^{d-1}.
\end{equation}
If, in addition, $m\ge3$, $1<d<m-1$, and
\[
 \max\left\{\frac dm,\frac12\right\}<\alpha<1,
\]
then the same realization may be chosen so that
\begin{equation}\label{eq:random-block-rectangle-general}
 |H_N\cap(A_1\times\cdots\times A_m)|
 \le K_\alpha\prod_{j=1}^m|A_j|^\alpha
\end{equation}
for all $A_j\subset[N]$, where $K_\alpha$ depends only on $m,d,\alpha$.
\end{lemma}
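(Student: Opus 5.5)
Take $H_N$ to be a Bernoulli random subset of $[N]^m$ with density $\rho_N:=N^{d-m}$, realized on the finite probability space $\Omega_N=2^{[N]^m}$ exactly as in the proof of Lemma~\ref{lem:dimension-thinning}. We then show that each of the four required properties fails with $\mu_N$-probability tending to zero; a realization outside the union of the exceptional families then exists for large $N$.

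\emph{Size and fibers.} The mean of $|H_N|$ is $N^d$. Exponential moments with $\lambda=\pm\log 2$ (the upper tail handled with $e^{\lambda}-1$ in place of $e^{-\lambda}-1$) give both inequalities in \eqref{eq:random-block-size-general} outside a set of measure $e^{-cN^d}$.

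For the fibers, each $E_{j,r}$ is a Bernoulli sample of $N^{m-1}$ points with mean $N^{d-1}$, and $d>1$ makes this mean a positive power of $N$. The same Chernoff bound, $\mu_N(|E_{j,r}|\ge K_0N^{d-1})\le (e/K_0)^{K_0N^{d-1}}$, therefore gives a super-polynomially small probability. A union bound over the $mN$ fibers proves \eqref{eq:random-block-fibers}.

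\emph{Dimension control.} Repeat Steps 3--6 of Lemma~\ref{lem:dimension-thinning} with host $[N]^m$, $d_0=m$ and $\varepsilon=0$. A rectangle with $|A_j|\le n$ has host count at most $n^m$, hence expected count at most $n^d(n/N)^{m-d}$. The bound $\binom ab\rho^b\le(e a\rho/b)^b$ at threshold $t=\lceil K_0n^d\rceil$ gives
\[
\exp\bigl[-K_0n^d\bigl(\log(K_0/e)+(m-d)\log(N/n)\bigr)\bigr],
\]
against the entropy $\exp(2mn\log(eN/n))$. For $n\le N/2$ the term $(m-d)\log(N/n)$ absorbs the entropy once $K_0(m-d)$ is large, and the $\sqrt N$ split finishes this range. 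For $N/2<n\le N$ we need $K_0n^d\log(K_0/e)\gg 2m\log(2e)n$, which holds because $d>1$; this is why $\varepsilon=0$ is allowed here, whereas the proof of Lemma~\ref{lem:dimension-thinning} needed $\varepsilon>0$ there.

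\emph{Restricted rectangles; main obstacle.} This is \eqref{eq:random-block-rectangle-general}: we must beat the entropy of all rectangles with a bound $K_\alpha P^\alpha$, $P=\prod_j b_j$, that may be much smaller than $n^d$ when the sizes $b_j=|A_j|$ are unbalanced. Order the sizes so that $b_1\le\cdots\le b_m$. The expected count is $\rho_NP$. The number of rectangles with these sizes is at most $N^{\sum_jb_j}\le N^{mb_m}$. The tail bound at threshold $t=\lceil K_\alpha P^\alpha\rceil$ is
\[
\bigl(e\rho_NP^{1-\alpha}/K_\alpha\bigr)^{t}.
\]
The plan is to show $t\gg b_m$ with a gain of a power of $N$ in the base; the ingredients are $\alpha>1/2$, $\alpha>d/m$ and $d<m-1$.
\begin{itemize}
\item If $P^{1-\alpha}\le N^{m-d-\delta}$, the base is at most $N^{-\delta}$. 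Also $P\ge b_m$ gives $t\ge K_\alpha b_m^\alpha$, but this does not dominate $b_m\log N$.
\item So rather than counting full rectangles, we reduce (as in Lemma~\ref{lem:vandermonde-sparse-strip}) to fibers or cylinders over fewer coordinates. Concretely, we use \eqref{eq:random-block-fibers} and the dimension bound: when $b_m$ is large, bound the count by the sum of fiber contributions $\le b_1K_0N^{d-1}$, or interpolate between the dimension bound $K_0b_m^d$ and the trivial bound $P$.
\item The interpolation should give $\min\{P,K_0b_m^{d}\}\le K P^{\alpha}$ whenever $b_1b_2\cdots$ is not too degenerate. The degenerate cases (one or two coordinates of size $\sim1$) are handled by $\alpha>1/2$: a two-dimensional section has at most $b_1b_2\le (b_1b_2)^{\alpha}\cdot(b_1b_2)^{1-\alpha}$ points, and after thinning a $2$-section holds $O(1)$ points once $d<m-1$.
\end{itemize}
Combining these cases to obtain a uniform $K_\alpha$ is the step I expect to require the most care. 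I would first try the cylinder-counting trick of Lemma~\ref{lem:vandermonde-sparse-strip}, applied to sections fixing $m-2$ coordinates, whose host count is $b_{m-1}b_m$ and whose density $N^{d-m}$ makes them essentially empty.
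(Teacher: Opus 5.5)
\textbf{There is a genuine gap: the restricted-rectangle estimate \eqref{eq:random-block-rectangle-general} is not proved.} Your size, fiber and box-dimension arguments are essentially the paper's Steps 1--3 and are fine; taking $\varepsilon=0$ is legitimate because $d>1$. For the rectangle estimate you give only a plan, and the plan does not close.

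\emph{Why the listed tools fail.} Consider $R=\{r_1\}\times\cdots\times\{r_{m-1}\}\times A_m$ with $|A_m|=b$ large, for instance with $m=3$, $d=1.9$, $\alpha=0.7$. The target is $K_\alpha b^\alpha$. Every bound you list exceeds it: the trivial bound $P=b$, the box bound $K_0b^d$, and the fiber bound $K_0N^{d-1}=K_0N^{0.9}$. Meanwhile the threshold $t\approx K_\alpha b^\alpha$ cannot pay for the roughly $N^{b}$ choices of $A_m$.

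\emph{The proposed remedy rests on a false claim.} A section with $m-2$ coordinates fixed has $N^2$ host points and mean occupancy $N^{d-m+2}$, which tends to infinity whenever $d>m-2$. For $m=3$ these sections are exactly the fibers $E_{j,r}$. Each carries about $N^{d-1}$ points, as your own fiber estimate shows, so they are not $O(1)$. Hence a two-dimensional cylinder trick could at best handle $d<m-2$, a range that is empty when $m=3$.

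\emph{The missing ingredient is one-dimensional sections.} A completion fiber, with all but one coordinate fixed, has $N$ host points. Its mean occupancy is $N^{d-m+1}\to0$, and this is exactly where $d<m-1$ is used. Choose $L_0$ with $L_0(m-1-d)>m$. A union bound over the $mN^{m-1}$ such lines shows that, with probability tending to one, every line contains fewer than $L_0$ points of $H$. This yields the deterministic bound $|H\cap R|\le L_0Q$ with $Q:=P/b_{\max}$, which disposes of every rectangle satisfying $L_0Q\le K_\alpha P^\alpha$. For the remaining rectangles, $b_{\max}\le (L_0/K_\alpha)^{1/\alpha}Q^{(1-\alpha)/\alpha}$, and therefore
\[
 \frac{K_\alpha P^\alpha}{b_{\max}}
 \ge K_\alpha\Bigl(\frac{K_\alpha}{L_0}\Bigr)^{(1-\alpha)/\alpha}Q^{2-1/\alpha}.
\]
Since $2-1/\alpha>0$ (this is where $\alpha>1/2$ enters), the threshold $t=\lceil K_\alpha P^\alpha\rceil$ is a large multiple of $b_{\max}$ once $K_\alpha$ is large. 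Your own estimates then close the union bound: the Chernoff base is at most $N^{-\delta}$, there are at most $N^{mb_{\max}}$ rectangles with given side lengths, and one sums over the at most $N^m$ side-length vectors. This is the paper's Step~4.
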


\begin{proof}
{
For each $N$, set $\rho_N:=N^{d-m}$ and consider the finite measure space
\[
\Omega_N:=2^{[N]^m},\qquad \mathcal F_N:=2^{\Omega_N},
\]
with
\[
\mu_N(\{H\})
=\rho_N^{|H|}(1-\rho_N)^{N^m-|H|}
\qquad(H\subseteq[N]^m).
\]

\proofstep{1}{Cardinality}
For every $\lambda>0$,
\[
\int_{\Omega_N}e^{-\lambda|H|}\,d\mu_N(H)
=
\bigl(1+\rho_N(e^{-\lambda}-1)\bigr)^{N^m}
\le
\exp\!\left(N^d(e^{-\lambda}-1)\right),
\]
whereas
\[
\int_{\Omega_N}e^{\lambda|H|}\,d\mu_N(H)
\le
\exp\!\left(N^d(e^\lambda-1)\right).
\]
Taking $\lambda=\log2$ in the first estimate and $\lambda=\log2$ in the
second gives
\[
\mu_N\!\left(\left\{H:|H|<\frac12N^d\right\}\right)\le e^{-N^d/8},
\]
and
\[
\mu_N\!\left(\left\{H:|H|>2N^d\right\}\right)
\le
\exp\!\left((1-2\log2)N^d\right).
\]
Thus the family violating \eqref{eq:random-block-size-general} has measure
tending to zero.

\proofstep{2}{Coordinate fibers}
Fix a coordinate fiber $F\subset[N]^m$. Then $|F|=N^{m-1}$. We first work with an auxiliary constant $K>e$ and put
$t:=\lceil KN^{d-1}\rceil$. If $t>|F|$, there is nothing to prove.
Otherwise, if $|H\cap F|\ge t$, then $H$ contains some $t$-point subset of
$F$. Hence
\[
\mu_N\bigl(\{H:|H\cap F|\ge t\}\bigr)
\le
\binom{N^{m-1}}{t}\rho_N^t
\le
\left(\frac{eN^{d-1}}{t}\right)^t
\le
\left(\frac eK\right)^t.
\]
There are exactly $mN$ coordinate fibers. Since $d>1$, finite
subadditivity gives
\[
mN\left(\frac eK\right)^{\lceil KN^{d-1}\rceil}\longrightarrow0.
\]
Moreover, outside this exceptional set,
\[
 |H\cap F|<\lceil KN^{d-1}\rceil
 \le KN^{d-1}+1
 \le (K+1)N^{d-1}.
\]
We henceforth enlarge constants, when necessary, to absorb the analogous
rounding terms arising from integer thresholds. Thus, taking $K_0\ge K+1$,
\eqref{eq:random-block-fibers} holds outside a set of $\mu_N$-measure tending
to zero.

\proofstep{3}{Uniform box estimate}
Fix $1\le n\le N$ and sets $A_j\subset[N]$ with $|A_j|\le n$. Enlarging them
if needed, assume $|A_j|=n$. Let
\[
R:=A_1\times\cdots\times A_m,\qquad t_n:=\lceil K_0n^d\rceil.
\]
If $t_n>|R|=n^m$, the exceptional family is empty. Otherwise
\[
\mu_N\bigl(\{H:|H\cap R|\ge t_n\}\bigr)
\le
\binom{n^m}{t_n}\rho_N^{t_n}
\le
\left[
\frac e{K_0}\left(\frac nN\right)^{m-d}
\right]^{t_n}.
\]
There are at most $(eN/n)^{mn}$ possible $m$-tuples of coordinate sets.
Write
\[
L:=\log(N/n),\qquad a:=\log(K_0/e).
\]
Since $t_n\ge K_0n^d$, finite subadditivity bounds the measure of all
violations at this scale by
\[
\exp\!\left(
mn(1+L)-K_0n^d\bigl(a+(m-d)L\bigr)
\right).
\]
In the nonempty range $K_0n^d\le n^m$, one has
$n^{m-d}\ge K_0$. Because $d>1$, $K_0=K_0(m,d)$ can be chosen so large that
\[
K_0n^{d-1}a\ge2m,\qquad
K_0(m-d)n^{d-1}\ge2m
\]
throughout this range. Consequently the exponent in the exponential bound just obtained,
\[
mn(1+L)-K_0n^d\bigl(a+(m-d)L\bigr),
\]
is at most
\[
-mn(1+L).
\]
Therefore the total measure of all subsets violating
\eqref{eq:random-block-dimension-general} is bounded by
\[
S_N:=\sum_{n=1}^N e^{-mn}\left(\frac nN\right)^{mn}.
\]
The elementary splitting
\[
[1,N]=[1,N^{1/2}]\cup(N^{1/2},N/2]\cup(N/2,N]
\]
gives respectively
\[
O(N^{-m/2}),\qquad
N e^{-m(1+\log2)N^{1/2}},\qquad
Ne^{-mN/2},
\]
so $S_N\to0$.

\proofstep{4}{Restricted rectangles}
Assume now that $m\ge3$, $1<d<m-1$, and
\[
\max\left\{\frac dm,\frac12\right\}<\alpha<1.
\]
Set
\[
\delta:=m\alpha-d>0,\qquad
\beta:=\frac{1-\alpha}{\alpha},\qquad
\eta:=2-\frac1\alpha>0.
\]
Choose an integer $L_0$ so large that
\[
L_0(m-1-d)>m.
\]
Fix values in any $m-1$ coordinate positions and let $F$ be the corresponding
\emph{completion fiber}, obtained by allowing only the remaining coordinate to
vary. Thus $|F|=N$. This is different from the one-coordinate fibers measured by $\Delta(H)$ in \eqref{eq:def-Delta}: here $m-1$ coordinates are fixed. The family
\[
\{H:|H\cap F|\ge L_0\}
\]
has measure at most
\[
\binom NL_0\rho_N^{L_0}
\le
\left(\frac{eN^{d-m+1}}{L_0}\right)^{L_0}.
\]
There are $mN^{m-1}$ such fibers. Define the good event
\[
 \mathcal E_N:=\{H\subset[N]^m:\ |H\cap F|<L_0
 \text{ for every completion fiber }F\}.
\]
Its complement is the union of the families for which some $(m-1)$-codegree is at least $L_0$, and hence
\[
\mu_N(\mathcal E_N^c)
\le
mN^{m-1}
\left(\frac{eN^{d-m+1}}{L_0}\right)^{L_0}.
\]
The exponent of $N$ on the right is
\[
 (m-1)+L_0(d-m+1)
 =(m-1)-L_0(m-1-d)<-1,
\]
by the choice of $L_0$. Therefore $\mu_N(\mathcal E_N^c)\to0$.

Fix now a nonempty rectangle with side cardinalities $a_j$, relabeled so that
$a_{\max}:=a_m=\max_j a_j$, and put
\[
Q:=\prod_{j<m}a_j.
\]
For $H\in\mathcal E_N$, the rectangle contains at most $L_0Q$ points of $H$.
Thus only
\[
L_0Q>K_\alpha(Qa_{\max})^\alpha
\]
needs further consideration. In this range
\[
a_{\max}\le\left(\frac{L_0}{K_\alpha}\right)^{1/\alpha}Q^\beta.
\]
Let
\[
k:=\lceil K_\alpha(Qa_{\max})^\alpha\rceil.
\]
Then
\[
\frac{k}{a_{\max}}
\ge
K_\alpha\left(\frac{K_\alpha}{L_0}\right)^\beta Q^\eta.
\]
Choose $K_\alpha$ so large that
\[
K_\alpha\left(\frac{K_\alpha}{L_0}\right)^\beta>\frac{8m}{\delta},
\qquad
\frac{\delta K_\alpha}{8}>m+1.
\]

For the fixed rectangle $R$, if $k>|R|$ the exceptional family is empty.
Otherwise
\[
\mu_N\bigl(\{H:|H\cap R|\ge k\}\bigr)
\le
\binom{Qa_{\max}}{k}\rho_N^k
\le
\left(\frac{eN^{d-m}Qa_{\max}}{k}\right)^k.
\]
Since $Qa_{\max}\le N^m$ and $\delta=m\alpha-d$,
\[
\frac{eN^{d-m}Qa_{\max}}{k}
\le
\frac e{K_\alpha}N^{d-m}(Qa_{\max})^{1-\alpha}
\le
\frac e{K_\alpha}N^{-\delta}.
\]
For large $N$ this is at most $N^{-\delta/2}$, so the exceptional family has
measure at most $N^{-\delta k/2}$. Moreover, in the nontrivial range,
\[
\sum_j a_j\le m a_{\max}
\le
\frac{mk}{K_\alpha(K_\alpha/L_0)^\beta}.
\]
For prescribed cardinalities, the number of rectangles is therefore at most
\[
N^{mk/[K_\alpha(K_\alpha/L_0)^\beta]}.
\]
By the choice of $K_\alpha$, multiplying this count by
$N^{-\delta k/2}$ gives at most $N^{-\delta k/4}$. Summing over the at most
$N^m$ cardinality vectors shows that the union of the families violating
\eqref{eq:random-block-rectangle-general} has $\mu_N$-measure tending to zero.

The union of the exceptional families from Steps 1--4 has measure tending to
zero. Its complement is therefore nonempty for every sufficiently large
$N$. Any $H_N$ in this complement satisfies all the stated properties
simultaneously.
}
\end{proof}

\begin{theorem}
\label{thm:high-dimensional-range}
Let $m\ge3$. For every $1<d<m-1$,
\begin{equation}
\label{eq:sparse-general-lower}\mathrm{prod}(m,d)\ge\min\left\{
2,\frac md\right\}.
\end{equation}
Consequently, if
\[
\frac m2\le d\le m-1,
\]
then
\begin{equation}
\label{eq:high-dimensional-curve}\mathrm{prod}(m,d)=\frac md.
\end{equation}
If $m-1<d\le m$, then
\begin{equation}
\label{eq:top-plateau}\mathrm{prod}(m,d)=1.
\end{equation}

\end{theorem}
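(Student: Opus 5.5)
The lower bound \eqref{eq:sparse-general-lower} will come from the random blocks of Lemma~\ref{lem:sparse-random-blocks-general}, glued together as in Theorem~\ref{thm:remaining-strip}. Fix $1<d<m-1$ and $1\le q<\min\{2,m/d\}$.

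\textbf{Choice of $\alpha$.} Choose $\alpha$ with
\[
 \max\left\{\frac dm,\frac12\right\}<\alpha<\frac1q .
\]
This interval is nonempty precisely because $q<2$ and $q<m/d$. It also gives $\alpha<1$, since $q\ge1$.

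\textbf{Building $\Lambda$.} For a sequence $N_s\uparrow\infty$, take blocks $H_s:=H_{N_s}$ from Lemma~\ref{lem:sparse-random-blocks-general}. These satisfy the size bound \eqref{eq:random-block-size-general}, the uniform box estimate \eqref{eq:random-block-dimension-general} with exponent exactly $d$, and the restricted-rectangle estimate \eqref{eq:random-block-rectangle-general}. Reindex the $j$th coordinates of $H_s$ into sets $I_{j,s}\subset\mathbb N$ that are pairwise disjoint in $s$, and let $\Lambda$ be the union of the reindexed blocks $\widetilde H_s$.

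\textbf{Dimension.} Since $|\widetilde H_s|\ge \tfrac12N_s^d$ and each projection has size at most $N_s$, we get $\dim(\Lambda)\ge d$. For the reverse bound, fix a box $A_1\times\cdots\times A_m$ with $|A_j|\le n$. Use the budget $\sum_s n_s\le mn$ from \eqref{eq:thinning-budget} and superadditivity of $b\mapsto b^d$ (valid since $d>1$). Together with \eqref{eq:random-block-dimension-general}, this gives $\Lambda(n)\le K_0(mn)^d$. So the dimension is exactly $d$, and no $\varepsilon$-family is needed here.

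\textbf{Product estimate.} On each block, Lemma~\ref{lem:restricted-lorentz} gives an $\ell_{1/\alpha,1}$ bound, and \eqref{eq:lorentz-embedding} converts it to an $\ell_q$ bound, since $q<1/\alpha$. Sum over blocks using H\"older's inequality with exponent $m$ in $s$, and then $\ell_q\subset\ell_m$ (valid since $q<m$). This is exactly the argument of Theorem~\ref{thm:remaining-strip}. Letting $q\uparrow\min\{2,m/d\}$ proves \eqref{eq:sparse-general-lower}.

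\textbf{The curve $m/2\le d\le m-1$.} For $m/2\le d<m-1$ we have $m/d\le2$, so the lower bound is $m/d$, and Proposition~\ref{prop:gamma-prod-upper} gives equality. The endpoint $d=m-1$ is outside the lemma's hypothesis and needs separate treatment. Since $m-1$ is an integer, I would use Theorem~\ref{thm:bayart-problem52}, which already gives $\mathrm{prod}(m,m-1)=m/(m-1)$. Note also that $m/2\ge1$, and the case $d=m/2$ is fine in either branch.

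\textbf{The top plateau $m-1<d\le m$.} The lower bound $\mathrm{prod}\ge1$ requires some infinite support of dimension $d$ that is $1$-product summable. This holds for every support, because $\sum_\Lambda\prod_j|x^{(j)}_{i_j}|\le\prod_j\|x^{(j)}\|_{\ell_1}$. Such supports exist at every $d\in[1,m]$: use $\mathbb N^m$ for $d=m$, and otherwise thin a host with exact growth, for instance via Lemma~\ref{lem:dimension-thinning} applied to $\Lambda_0=\mathbb N^m$ with $d_0=m$ and blocks $[N]^m$. The upper bound follows from Proposition~\ref{prop:general-section-obstruction}: here $\lceil d\rceil=m$, so $m-\lceil d\rceil+1=1$.

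\textbf{Main obstacle.} The only delicate point is making sure the rectangle estimate and the box estimate hold simultaneously for one realization, with constants independent of $N$. The lemma already supplies this. What remains is checking that the union over blocks preserves both estimates, which is the same disjoint-reindexing bookkeeping as before.
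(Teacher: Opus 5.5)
Your proposal is correct and follows essentially the same route as the paper. Both arguments use the same choice $\max\{d/m,1/2\}<\alpha<1/q$ and the same glued random blocks from Lemma~\ref{lem:sparse-random-blocks-general}, with the budget $\sum_s n_s\le mn$ giving exact dimension. Both then use the Lorentz restricted-type bound plus H\"older in the block index for the product estimate, Theorem~\ref{thm:bayart-problem52} at $d=m-1$, and, for the top plateau, Lemma~\ref{lem:dimension-thinning} applied to $\mathbb N^m$ together with Proposition~\ref{prop:general-section-obstruction}.

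Your only variation is minor. You observe that $1$-product summability is automatic for every support, so the thinning lemma is needed only to produce an infinite support of dimension exactly $d$. The paper instead invokes the same lemma as preserving the $\ell_1$ product estimate.
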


\begin{proof}
Fix first $1<d<m-1$ and
\[
1\le q<\min\left\{2,\frac md\right\}.
\]
Then one can choose
\[
\max\left\{  \frac dm,\frac12\right\}  <\alpha<\frac1q.
\]
Choose integers $N_{s}\uparrow\infty$. For each $s$, apply Lemma~\ref{lem:sparse-random-blocks-general} and choose a
realization $H_s\subset[N_s]^m$ for which
\eqref{eq:random-block-size-general}, \eqref{eq:random-block-dimension-general}
and \eqref{eq:random-block-rectangle-general} hold. For each coordinate $j$, reindex the copy of $[N_{s}]$
used by the $s$-th block onto a finite set $I_{j,s}\subset\mathbb{N}$,
choosing the sets $I_{j,s}$ pairwise disjoint in $s$. Let $\widetilde
H_{s}\subset I_{1,s}\times\cdots\times I_{m,s}$ be the reindexed copy of
$H_{s}$, and define
\[
\Lambda:=\bigcup_{s=1}^{\infty}\widetilde H_{s}.
\]
For the combinatorial dimension, the estimate
$|\widetilde H_s|\ge N_s^d/2$, together with $|I_{j,s}|=N_s$, gives
$\dim(\Lambda)\ge d$. Conversely, fix finite sets
$A_j\subset\mathbb{N}$ with $|A_j|\le n$ and put
\[
 n_s:=\max_{1\le j\le m}|A_j\cap I_{j,s}|.
\]
For each fixed coordinate $j$, the sets $I_{j,s}$ are disjoint in $s$, so
$\sum_s|A_j\cap I_{j,s}|\le |A_j|\le n$. Since
$n_s\le\sum_{j=1}^m|A_j\cap I_{j,s}|$, summing first in $s$ and then in
$j$ gives the block budget
\[
 \sum_s n_s\le mn.
\]
By \eqref{eq:random-block-dimension-general},
\[
 |\Lambda\cap(A_1\times\cdots\times A_m)|
 \le K_0\sum_s n_s^d
 \le K_0\left(\sum_s n_s\right)^d
 \le K_0m^dn^d.
\]
Hence $\Lambda(n)\le K_0m^dn^d$ for all $n$ and $\dim(\Lambda)=d$; in
particular, $\Lambda$ has exact combinatorial dimension $d$.
Set $r_\alpha:=1/\alpha$. Since $\alpha<1/q$, we have $q<r_\alpha$.
On each block, \eqref{eq:random-block-rectangle-general} is a restricted-type
estimate with Lorentz exponent $r_\alpha$. Let $C_{q,r_\alpha}$ be the constant in \eqref{eq:lorentz-embedding} and set
\[
 C_*:=C_{m,\alpha}K_\alpha C_{q,r_\alpha}^{m}.
\]
Lemma~\ref{lem:restricted-lorentz} then gives, for every $s$,
\[
\sum_{\mathbf{i}\in\widetilde H_{s}}\prod_{j=1}^{m} |x^{(j)}_{i_{j}}|
\le C_*\prod_{j=1}^{m}\|x^{(j)}|_{I_{j,s}}\|_{\ell_q}.
\]
Define
\[
a_{j,s}:=\|x^{(j)}|_{I_{j,s}}\|_{\ell_q}.
\]
Summing over the blocks and applying H\"older's inequality in the index $s$
with exponent $m$ gives
\[
\sum_{\mathbf{i}\in\Lambda}\prod_{j=1}^{m}|x^{(j)}_{i_{j}}| \le C_*\prod_{j=1}^{m} \left(  \sum_{s} a_{j,s}^{m}\right)  ^{1/m}.
\]
Since $q<2\le m$,
\[
\left(  \sum_{s} a_{j,s}^{m}\right)  ^{1/m} \le\left(  \sum_{s} a_{j,s}%
^{q}\right)  ^{1/q} \le\|x^{(j)}\|_{\ell_q}.
\]
Hence the global product estimate holds with exponent $q$. Since every
$q<\min\{2,m/d\}$ can be chosen at the start of the proof,
\eqref{eq:sparse-general-lower} follows. If $m/2\le d<m-1$, then $m/d\le2$,
and Proposition~\ref{prop:gamma-prod-upper} gives
\eqref{eq:high-dimensional-curve}. The endpoint $d=m-1$ is
Theorem~\ref{thm:bayart-problem52}. If $m-1<d<m$,
Proposition~\ref{prop:general-section-obstruction} with $r=1$ gives the upper
bound $1$. For the reverse inequality, apply
Lemma~\ref{lem:dimension-thinning} to the full support $\Lambda_{0}%
=\mathbb{N}^{m}$, which has dimension $m$, satisfies the product estimate in
$\ell_{1}$ with constant one, and has admissible blocks $B_{N}=[N]^{m}$. This
gives a support of dimension $d$ with product exponent $1$. At $d=m$, the full
support itself gives the same exponent. Hence \eqref{eq:top-plateau} follows.
\end{proof}

\subsection{Solution of Problem~5.2}\label{sec:problem52-proof}

\begin{proof}[Proof of Theorem~\ref{thm:A}]
We verify the formula regime by regime, always matching the available lower
construction with the corresponding upper obstruction from
Proposition~\ref{prop:gamma-prod-upper} and the coordinate-section obstruction in Proposition~\ref{prop:general-section-obstruction}.

\proofstep{1}{The bilinear case}
If $m=2$, Theorem~\ref{thm:bayart-problem52} gives
$\mathrm{prod}(2,1)=2$, while Corollary~\ref{cor:initial-plateau} gives
$\mathrm{prod}(2,d)=1$ for $1<d\le2$. These are precisely the two values in
\eqref{eq:main-product-formula}. Assume henceforth that $m\ge3$.

\proofstep{2}{The endpoint $d=1$ and the initial plateau}
At $d=1$, Theorem~\ref{thm:bayart-problem52} gives
$\mathrm{prod}(m,1)=m$, which agrees with both the dimensional upper bound
$m/d=m$ and \eqref{eq:main-product-formula}. For
\[
 1<d\le \frac{m}{m-1},
\]
Corollary~\ref{cor:initial-plateau} gives the lower value $m-1$. In this
interval $\lceil d\rceil=2$, so the section obstruction is
$m-\lceil d\rceil+1=m-1$; hence the lower construction and the upper bound
coincide.

\proofstep{3}{The range between the initial plateau and dimension two}
For
\[
 \frac{m}{m-1}<d<2,
\]
Theorem~\ref{thm:remaining-strip} gives $\mathrm{prod}(m,d)=m/d$. Here the
matching upper bound is Proposition~\ref{prop:gamma-prod-upper}, namely
$\mathrm{prod}(m,d)\le m/d$. At $d=2$, the integer-dimensional result,
Theorem~\ref{thm:bayart-problem52}, gives the same value $m/2$, again matching
the dimensional obstruction.

\proofstep{4}{The remaining dimensions}
{
When $m\ge5$ and $2<d<m/2$, Theorem~\ref{thm:curve-above-two} gives the lower value $m/d$,
and Proposition~\ref{prop:gamma-prod-upper} gives the reverse inequality.
The remaining dimensions up to $m-1$ are covered without overlap as follows.
If $m=3$, no new interval remains after $d=2=m-1$.
If $m=4$, Theorem~\ref{thm:high-dimensional-range} gives
$\mathrm{prod}(4,d)=4/d$ for $2<d\le3$.
If $m\ge5$, the same theorem gives
$\mathrm{prod}(m,d)=m/d$ for $m/2\le d\le m-1$.
In each case Proposition~\ref{prop:gamma-prod-upper} supplies the matching upper bound.
Finally, Theorem~\ref{thm:high-dimensional-range} gives
\[
 \mathrm{prod}(m,d)=1 \qquad(m-1<d\le m).
\]
}
In this last interval $\lceil d\rceil=m$, so the section obstruction equals
$m-\lceil d\rceil+1=1$ and is sharp.

The displayed regimes cover $[1,m]$. In each one the lower construction
matches the relevant upper obstruction, and the resulting values are exactly
those in \eqref{eq:main-product-formula}--\eqref{eq:main-product-piecewise}.
\end{proof}

\begingroup

\section{Local sparsity and projection entropy}\label{mult:sec:local}

For a nonempty finite set $F\subset[n]^m$ and $1\le j\le m$, let
\[
 \pi_j(F):=\{i_j:(i_1,\ldots,i_m)\in F\}
\]
be its $j$-th coordinate projection. We write
\begin{equation}\label{mult:eq:local-parameters}
 M(F):=|F|,\qquad
 v_j(F):=|\pi_j(F)|,\qquad
 P(F):=\prod_{j=1}^m v_j(F).
\end{equation}
The entropy associated with the coordinate projections is
\begin{equation}\label{mult:eq:entropy-def}
 H_n(F):=
 1+\sum_{j=1}^m v_j(F)\log\frac{en}{v_j(F)}.
\end{equation}
The logarithm is natural. Notice that $1\le v_j(F)\le M(F)$ and
$v_j(F)\le n$.

\begin{definition}[Local sparsity]\label{mult:def:local-sparsity}
Let $p>2$ and $\rho>0$. A set $\Lambda\subset[n]^m$ is
\emph{$(p,\rho)$-locally sparse} if
\begin{equation}\label{mult:eq:local-sparsity}
 M(F)H_n(F)
 \le n^\rho P(F)^{2/p}
\end{equation}
for every nonempty subset $F\subset\Lambda$.
\end{definition}

The following proposition is the main combinatorial step.

\begin{proposition}\label{mult:prop:random-support}
Let $m\ge3$, $d\in[1,m-2]$, and
\begin{equation}\label{mult:eq:p-range-local}
 2<p<\min\left\{m-\lceil d\rceil+1,\frac{2m}{d+1}\right\}.
\end{equation}
For every $\rho>0$ there are constants $c>0$ and $n_0\in\N$ such that, for
all $n\ge n_0$, there exists a $(p,\rho)$-locally sparse set
$\Lambda_n\subset[n]^m$ satisfying
\begin{equation}\label{mult:eq:support-size}
 |\Lambda_n|\ge c n^d.
\end{equation}
\end{proposition}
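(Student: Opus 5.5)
The plan is to take $\Lambda_n$ to be a Bernoulli random subset of $[n]^m$ with density $\rho_n:=n^{d-m}$, exactly as in Lemma~\ref{lem:sparse-random-blocks-general}. Step~1 of that proof already gives $|\Lambda_n|\ge\frac12n^d$ outside an event of vanishing measure, which yields \eqref{mult:eq:support-size}. The first reduction removes the quantifier over all subsets $F$. If $F\subset\Lambda_n$ has projections $B_j:=\pi_j(F)$ with $|B_j|=v_j$, then $M(F)\le|\Lambda_n\cap R|$ for $R:=B_1\times\cdots\times B_m$. Moreover $H_n(F)$ and $P(F)$ depend only on $(v_1,\ldots,v_m)$. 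It therefore suffices to prove, simultaneously for all rectangles $R\subset[n]^m$ with side sizes $v_j$,
\[
 |\Lambda_n\cap R|\;\le\;\frac{n^\rho P^{2/p}}{H},\qquad P:=\prod_j v_j,\quad H:=1+\sum_j v_j\log\frac{en}{v_j}.
\]
The number of rectangles with prescribed side sizes is at most $\prod_j\binom{n}{v_j}\le e^{H}$. This is exactly why the entropy $H_n$ appears in Definition~\ref{mult:def:local-sparsity}.

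The second step is a union bound for ``balanced'' rectangles. Set $k:=\lceil n^\rho P^{2/p}/H\rceil$. As in Step~4 of Lemma~\ref{lem:sparse-random-blocks-general}, the probability that a fixed $R$ meets $\Lambda_n$ in at least $k$ points is at most
\[
\bigl(e n^{d-m}P/k\bigr)^k .
\]
Since $P\le n^m$ and $p<2m/(d+1)<2m/d$, we have $n^{d-m}P^{1-2/p}\le n^{d-2m/p}\le n^{-\delta}$ with $\delta>0$. The base is therefore at most $n^{-\delta/2}H\,n^{-\rho}$, which is small. The union bound closes once $k\log n\gg H$, i.e.\ once $n^\rho P^{2/p}\gtrsim H^2$ up to a factor $n^{-\rho/2}$. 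Since $H\lesssim m\,v_{\max}\log n$, this holds whenever $P^{2/p}\ge n^{-\rho/2}v_{\max}^2$. Summing over the at most $n^m$ size vectors then costs only a polynomial factor.

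The third step handles ``thin'' rectangles, where $P^{2/p}$ is small compared with $v_{\max}^2$, by a deterministic sectional argument. Put $s:=\lceil d\rceil$. Any fiber obtained by fixing $s$ coordinates has $n^{m-s}$ cells, so its expected number of points in $\Lambda_n$ is $n^{d-s}\le1$. A Chernoff/binomial bound with threshold $n^{\rho/2}$, summed over the at most $\binom ms n^s$ such fibers, shows that with high probability every such codegree is at most $n^{\rho/2}$. Fixing the $s$ coordinates with the smallest $v_j$ then gives
\[
 |\Lambda_n\cap R|\le n^{\rho/2}Q,\qquad Q:=\text{product of the $s$ smallest }v_j .
\]
It remains to check $QH\le n^{\rho/2}P^{2/p}$ in the thin regime. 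Write $P\ge Q\,v_{\max}^{\,0}\cdots$ and use that the $m-s$ remaining sides are each at least the largest of the $s$ smallest ones, with one of them equal to $v_{\max}$. The constraint $p<m-s+1$ is what should make $P^{2/p}$ dominate $Q\cdot v_{\max}\log n$ there. Here $\rho>0$ absorbs logarithms and the assumption $d\le m-2$ gives $m-s\ge2$, which leaves room.

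I expect the main obstacle to be the interface between the two regimes. The thin-regime inequality has to hold exactly where the union bound fails, namely for $P^{2/p}\lesssim v_{\max}^2$. This requires a careful case analysis on how the sizes $v_j$ are ordered, and it is where both terms of the minimum in \eqref{mult:eq:p-range-local} must be used simultaneously. If a single codegree level does not suffice, I would first try a hierarchy of codegree bounds: fix $t$ coordinates for each $t\ge s$, each with threshold $n^{\rho/2}$ (these hold with high probability for the same reason). I would then take, for each rectangle, the best of these sectional bounds together with the balanced union bound, and optimize over $t$.
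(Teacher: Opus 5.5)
\textbf{There is a genuine gap, and it is where you expected it.}

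\emph{Two problems in the setup.} First, the reduction to \emph{all} rectangles is too strong. Your displayed sufficient condition fails for every nonempty $\Lambda_n$ once $\rho<1-2/p$. Indeed, if $\mathbf i\in\Lambda_n$ and $R=[n]\times\{i_2\}\times\cdots\times\{i_m\}$, then $P=n$ and $H\ge n$, so the condition demands $|\Lambda_n\cap R|\le n^{\rho+2/p-1}<1$. Only rectangles generated by some $F$ matter, i.e.\ those with $B_j=\pi_j(F)$. For these you also know $|\Lambda_n\cap R|\ge M(F)\ge V:=\max_j v_j$, and your argument never uses this. Second, in your balanced regime (essentially the paper's Regime~I), beating $H$ up to $n\log n$ needs $2m/p-d>1$, i.e.\ the full strength of $p<2m/(d+1)$. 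With $\delta=2m/p-d$ as you define it, the bound $n^{-\delta/2}Hn^{-\rho}$ is not small as written.

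\emph{The main gap: no sectional bound can handle your thin regime, even for generated rectangles.} Take $(m,d)=(4,2)$ and $5/2<p<8/3$, which is exactly the range needed for Corollary~\ref{mult:cor:noninteger}. Take sides $(w,w,w,w^2)$ with $w=n^c$, $c\le 1/2$ fixed, and $\rho$ small.
\begin{itemize}
\item Here $Q=w^2=V$, $P=w^5$ and $H\asymp w^2\log n$.
\item The shape is thin, since $w^{10/p}\ll n^{-\rho/2}w^4$.
\item Your sectional bound only gives $|\Lambda_n\cap R|\,H\lesssim n^{\rho/2}w^4\log n$. This exceeds the target $n^\rho w^{10/p}$ because $10/p<4$.
\item The hierarchy does not help. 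Fixing $t>2$ coordinates only enlarges the product of the fixed sides, and $t=1$ gives $|\Lambda_n\cap R|\lesssim n\,w\ge w^2$.
\item In fact any $F$ with these projection sizes has $M(F)\ge w^2$. So local sparsity forces that no such $F$ exists at all, which is a purely probabilistic statement no codegree bound can deliver.
\end{itemize}

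\emph{The missing idea is the paper's Regime~II.} In the thin regime, run the first-moment count with threshold $M\ge V$ instead of $k$. Bound the projection entropy by roughly $r\,V\log(n/V)$, where $r$ is the number of sides comparable to $V$, not by $mV\log n$. With the crude bound, the count at threshold $V$ would require $P\lesssim n^{-d}V$, which never holds. Thinness gives $P\le n^{-\kappa}V^p$, which forces $r\le p<m-\lceil d\rceil+1$, hence $r\le m-\lceil d\rceil$ by integrality. Writing $V=n^a$, the exponent per point is $r+d-m+a(p-r-1)-\kappa+o(1)$, which is negative uniformly in $a\in[0,1]$. This is where the constraint $p<m-\lceil d\rceil+1$ actually enters. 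In the example above it reads $-1+a(p-2)<0$.
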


\begin{proof}
Fix $m,d,p$ as in the statement and $\rho>0$. Set
\begin{equation}\label{mult:eq:r0-theta}
 r_0:=m-\lceil d\rceil+1,
 \qquad
 \theta:=d-\lceil d\rceil+1\in(0,1].
\end{equation}
Thus
\begin{equation}\label{mult:eq:d-m-theta}
 d-m=\theta-r_0.
\end{equation}
The strict inequalities in \eqref{mult:eq:p-range-local} give
\begin{equation}\label{mult:eq:gaps}
 \delta_1:=\frac{2m}{p}-d-1>0,
 \qquad
 \delta_2:=r_0-p>0.
\end{equation}

Choose
\begin{equation}\label{mult:eq:kappa-choice}
 \kappa:=\frac{p\rho}{4}
\end{equation}
and then choose $\tau>0$ so small that
\begin{equation}\label{mult:eq:tau-choice}
 \tau<\frac{\rho}{4}
 \qquad\text{and}\qquad
 m\tau<\frac{\kappa}{4}.
\end{equation}

Let $\Lambda\subset[n]^m$ be obtained by retaining every point of $[n]^m$
independently with probability
\begin{equation}\label{mult:eq:bernoulli-density}
 q:=n^{d-m}.
\end{equation}
Then $\E|\Lambda|=n^d$. We show that the probability that $\Lambda$ fails
\eqref{mult:eq:local-sparsity} tends to zero.

Suppose that $F\subset\Lambda$ is nonempty and violates
\eqref{mult:eq:local-sparsity}. To shorten notation write
\[
 M:=M(F),\qquad v_j:=v_j(F),\qquad
 P:=P(F),\qquad H:=H_n(F),\qquad
 V:=\max_{1\le j\le m}v_j.
\]
Since every projection is nonempty,
\begin{equation}\label{mult:eq:basic-MV}
 1\le v_j\le M,
 \qquad
 V\le M,
 \qquad
 P\le V^m.
\end{equation}
Moreover,
\begin{equation}\label{mult:eq:H-crude}
 H\le C_m V\log n
\end{equation}
for all sufficiently large $n$.

For fixed integers $v_1,\ldots,v_m,M$, the expected number of sets
$F\subset\Lambda$ having these parameters is at most
\begin{align}
 \prod_{j=1}^m\binom{n}{v_j}\binom{P}{M}q^M
 &\le
 \exp\left(\sum_{j=1}^m v_j\log\frac{en}{v_j}\right)
 \left(\frac{eqP}{M}\right)^M.\label{mult:eq:first-moment-master}
\end{align}
We estimate \eqref{mult:eq:first-moment-master} in two regimes.

\medskip
\noindent\emph{Regime I: $M\ge Vn^\tau$.}
Since $F$ violates \eqref{mult:eq:local-sparsity},
\[
 MH>n^\rho P^{2/p}.
\]
Using \eqref{mult:eq:H-crude},
\[
 M>\frac{n^\rho P^{2/p}}{H}
 \ge \frac{n^\rho}{C_mV\log n}P^{2/p}.
\]
Choose $n_0$ so large that
\[
 C_m\log n\le n^{\rho/4}
 \qquad(n\ge n_0).
\]
Then
\begin{equation}\label{mult:eq:M-lower-regime1}
 M\ge n^{3\rho/4}\frac{P^{2/p}}{V}.
\end{equation}
Therefore
\begin{align}
 \frac{M}{qP}
 &\ge
 n^{3\rho/4+m-d}\frac{P^{2/p-1}}{V}\notag\\
 &\ge
 n^{3\rho/4+m-d}V^{2m/p-m-1}\notag\\
 &\ge
 n^{3\rho/4+2m/p-d-1}
 =n^{3\rho/4+\delta_1}.
 \label{mult:eq:ratio-regime1}
\end{align}
Here the second inequality uses $2/p-1<0$ together with $P\le V^m$,
and the third uses $V\le n$ and $2m/p-m-1<0$.

It follows from \eqref{mult:eq:ratio-regime1} that
\[
 \left(\frac{eqP}{M}\right)^M
 \le \exp(-cM\log n)
\]
for some $c>0$ depending only on $m,d,p,\rho$. On the other hand,
\eqref{mult:eq:H-crude} and $M\ge Vn^\tau$ give
\[
 \sum_{j=1}^m v_j\log\frac{en}{v_j}
 \le C_mMn^{-\tau}\log n
 =o(M\log n).
\]
Thus, for all sufficiently large $n$, the contribution of any fixed
parameter class in Regime~I is at most
\begin{equation}\label{mult:eq:regime1-decay}
 n^{-c_1M}
\end{equation}
for some $c_1>0$.

\medskip
\noindent\emph{Regime II: $V\le M<Vn^\tau$.}
Again using the failure of \eqref{mult:eq:local-sparsity},
\eqref{mult:eq:H-crude}, and $M<Vn^\tau$, we have
\[
 n^\rho P^{2/p}<MH
 < C_mV^2n^\tau\log n.
\]
Hence
\begin{align*}
 P
 &<\left(C_m n^{\tau-\rho}\log n\right)^{p/2}V^p\\
 &=C_m^{p/2}(\log n)^{p/2}
   n^{-p(\rho-\tau)/2}V^p.
\end{align*}
Because $\tau<\rho/4$,
\[
 \frac{p(\rho-\tau)}2>\frac{3p\rho}{8}.
\]
With $\kappa=p\rho/4$, the difference between the two powers is
\[
 \frac{3p\rho}{8}-\kappa
 =\frac{p\rho}{8}>0.
\]
Therefore, after increasing $n_0$ so that
\[
 C_m^{p/2}(\log n)^{p/2}\le n^{p\rho/8},
\]
we obtain
\begin{equation}\label{mult:eq:P-small}
 P\le n^{-\kappa}V^p.
\end{equation}

We first note that a violating set in this regime must satisfy
$V\ge n^\tau$. Indeed, if $V<n^\tau$, then
\eqref{mult:eq:P-small} and \eqref{mult:eq:tau-choice} give
\[
 1\le P\le n^{-\kappa+p\tau}<1,
\]
a contradiction.

Define the set of large projections by
\begin{equation}\label{mult:eq:J-def}
 J:=\{j: v_j\ge Vn^{-\tau}\},
 \qquad r:=|J|.
\end{equation}
We claim that
\begin{equation}\label{mult:eq:r-le-p}
 r\le p.
\end{equation}
If $r>p$, then \eqref{mult:eq:P-small} gives
\[
 (Vn^{-\tau})^r\le P\le n^{-\kappa}V^p,
\]
so
\[
 V^{r-p}\le n^{r\tau-\kappa}<1
\]
by \eqref{mult:eq:tau-choice}, whereas $V\ge1$. This proves
\eqref{mult:eq:r-le-p}. Notice also that $r\in\mathbb N$ and
$r_0=m-\lceil d\rceil+1\in\mathbb N$; hence
\begin{equation}\label{mult:eq:r-integer-gap}
 r\le p<r_0 \quad\Longrightarrow\quad r\le r_0-1.
\end{equation}

Write $V=n^a$, where $a\in[0,1]$. For $j\in J$ we have
$v_j\le V$ and $v_j\ge Vn^{-\tau}$; hence
\[
 v_j\log\frac{en}{v_j}
 \le V\bigl((1-a+\tau)\log n+1\bigr).
\]
For $j\notin J$, the monotonicity of
$t\mapsto t\log(en/t)$ on $[1,n]$ gives
\[
 v_j\log\frac{en}{v_j}
 \le Vn^{-\tau}\bigl((1-a+\tau)\log n+1\bigr).
\]
Since $M\ge V$, we obtain
\begin{equation}\label{mult:eq:entropy-regime2}
 \sum_{j=1}^m v_j\log\frac{en}{v_j}
 \le \bigl(r(1-a+\tau)+o(1)\bigr)M\log n,
\end{equation}
where the $o(1)$ is uniform in the admissible parameters. Indeed,
$r\le m$, $M\ge V$, and the contribution of the small projections carries the
uniform factor $n^{-\tau}$; the remaining additive terms are $O(1/\log n)$,
uniformly for $a\in[0,1]$.

By \eqref{mult:eq:P-small} and $M\ge V$,
\begin{equation}\label{mult:eq:edge-factor-regime2}
 \frac{eqP}{M}
 \le e\,n^{d-m-\kappa}V^{p-1}.
\end{equation}
Combining \eqref{mult:eq:first-moment-master},
\eqref{mult:eq:entropy-regime2}, and \eqref{mult:eq:edge-factor-regime2}, the logarithm
of the expected number of configurations in a fixed parameter class,
divided by $M\log n$, is at most
\begin{equation}\label{mult:eq:Psi}
 \Psi(r,a)
 :=r+d-m+a(p-r-1)-\kappa+r\tau+o(1).
\end{equation}
Using \eqref{mult:eq:d-m-theta}, write
\[
 \Phi(r,a)
 :=r-r_0+\theta+a(p-r-1),
\]
so that
\[
 \Psi(r,a)=\Phi(r,a)-\kappa+r\tau+o(1).
\]
The estimate \eqref{mult:eq:r-le-p}, together with $p<r_0$, forces the desired
negativity. If $p-r-1\ge0$, then
\begin{align*}
 \Phi(r,a)
 &\le \Phi(r,1)
 =p-r_0+\theta-1
 \le p-r_0<0.
\end{align*}
If $p-r-1<0$, then
\[
 \Phi(r,a)\le\Phi(r,0)=r-r_0+\theta\le0,
\]
because $r$ is an integer and $r\le p<r_0$, hence $r\le r_0-1$.
If equality occurs in the last estimate, then necessarily $\theta=1$ and
$r=r_0-1$. In every case, since $r\le m$ and $m\tau<\kappa/4$,
\[
 -\kappa+r\tau\le -\frac{3\kappa}{4}.
\]
Hence there is a number $\sigma>0$, depending only on
$m,d,p,\rho$, such that for all $a\in[0,1]$ and all admissible integers $r$,
\[
 \Phi(r,a)-\kappa+r\tau\le-\sigma.
\]
After increasing $n_0$ so that the uniform $o(1)$ in
\eqref{mult:eq:Psi} is at most $\sigma/2$, we obtain
\[
 \Psi(r,a)\le-\frac{\sigma}{2}.
\]
Consequently there exists $c_2>0$ such that every fixed parameter class in
Regime~II contributes at most
\begin{equation}\label{mult:eq:regime2-decay}
 n^{-c_2M}.
\end{equation}

We now sum over the parameter classes. Since $v_j\le M$, there are at most
$M^m$ possible vectors $(v_1,\ldots,v_m)$ for a fixed $M$. Necessarily
$M\le n^m$; extending the ensuing sum to all $M\ge1$ only enlarges the upper
bound. Therefore
\eqref{mult:eq:regime1-decay} and \eqref{mult:eq:regime2-decay} give
\[
 \Pp\{\Lambda\text{ is not $(p,\rho)$-locally sparse}\}
 \le \sum_{M\ge1}M^m n^{-cM}=o(1)
\]
for some $c>0$.

Finally, $|\Lambda|$ is binomial with mean $n^d$. A standard Chernoff bound
gives
\[
 \Pp\{|\Lambda|<\tfrac12n^d\}\le e^{-c'n^d}.
\]
Hence, with positive probability and for all sufficiently large $n$,
$\Lambda$ is $(p,\rho)$-locally sparse and
$|\Lambda|\ge n^d/2$. This proves the proposition.
\end{proof}

\begin{remark}\label{mult:rem:two-regimes}
In Regime~I the decay is governed by
$\delta_1=2m/p-d-1$. In Regime~II the number $r$ of coordinate projections
comparable with the largest one satisfies $r\le p<r_0$, where
$r_0=m-\lceil d\rceil+1$. These are the two restrictions appearing in
\eqref{mult:eq:Bayart-upper}.
\end{remark}

\section{Random signs on rectangular traces}\label{mult:sec:signs}

The local estimate from the previous section contains no signs. We next show
that a single choice of Rademacher coefficients controls all rectangular
traces of a fixed support.

For $A_1,\ldots,A_m\subset[n]$ and $\Lambda\subset[n]^m$, put
\begin{equation}\label{mult:eq:rect-trace}
 F_A:=\Lambda\cap(A_1\times\cdots\times A_m).
\end{equation}
Whenever $F_A\ne\varnothing$, replacing $A_j$ by $\pi_j(F_A)$ does not change
the trace:
\begin{equation}\label{mult:eq:effective-rectangle}
 F_A
 =\Lambda\cap\bigl(\pi_1(F_A)\times\cdots\times\pi_m(F_A)\bigr).
\end{equation}
Thus every nonempty rectangular trace may be represented using its
coordinate projections.

\begin{proposition}[Rectangular sign lemma]\label{mult:prop:sign-lemma}
For every $m\ge2$ there exists $C_m>0$ with the following property. For every
$n\in\N$ and every $\Lambda\subset[n]^m$, one can choose signs
$\varepsilon_{\mathbf i}\in\{-1,1\}$, $\mathbf i\in\Lambda$, such that for
every nonempty rectangular trace $F=F_A$,
\begin{equation}\label{mult:eq:rect-sign-estimate}
 \sup_{\substack{|z_i^{(j)}|\le1\\
 i\in\pi_j(F),\ 1\le j\le m}}
 \left|
 \sum_{\mathbf i\in F}\varepsilon_{\mathbf i}
 z^{(1)}_{i_1}\cdots z^{(m)}_{i_m}
 \right|
 \le C_m\sqrt{M(F)H_n(F)}.
\end{equation}
\end{proposition}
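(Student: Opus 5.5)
We choose the signs at random: let $(\varepsilon_{\mathbf i})_{\mathbf i\in\Lambda}$ be independent Rademacher variables. A union bound then shows that, with positive probability, \eqref{mult:eq:rect-sign-estimate} holds simultaneously for every nonempty rectangular trace. By \eqref{mult:eq:effective-rectangle}, every nonempty trace is determined by its projection tuple $(\pi_1(F),\ldots,\pi_m(F))$. Hence the traces with prescribed projection cardinalities $v_1,\ldots,v_m$ number at most
\[
 \prod_{j=1}^m\binom{n}{v_j}\le\exp\Bigl(\sum_{j=1}^m v_j\log\frac{en}{v_j}\Bigr)\le e^{H_n(F)}.
\]
Summing over the at most $n^m$ cardinality vectors costs a factor $e^{m\log n}$. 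Since $H_n(F)\ge m\log(en/n)\ge m$ only crudely, we absorb this factor differently: each $v_j\ge1$ gives $v_j\log(en/v_j)\ge\log e=1$. This is not enough, so instead we sum the per-trace failure probabilities weighted by $e^{-2H_n(F)}$ directly, as explained below.

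\textbf{Step 1: a fixed trace and a fixed test tuple.} For fixed $F$ and fixed $z^{(j)}$ with $|z^{(j)}_i|\le1$, the sum $\sum_{\mathbf i\in F}\varepsilon_{\mathbf i}z_{\mathbf i}$ has coefficients of modulus at most $1$. Hoeffding's inequality, applied to real and imaginary parts separately, gives
\[
 \Pp\bigl(|\cdot|>t\sqrt{M(F)}\bigr)\le4e^{-t^2/4}.
\]

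\textbf{Step 2: discretization of the supremum.} The map $(z^{(1)},\ldots,z^{(m)})\mapsto\sum_F\varepsilon_{\mathbf i}z^{(1)}_{i_1}\cdots z^{(m)}_{i_m}$ is multiaffine in each block. By the maximum principle, the supremum over the polydisc is therefore attained on the torus, and it suffices to control it on a $\delta$-net of $\mathbb T^{v_j}$ in $\ell_\infty$, for each $j$. Using multilinearity and telescoping one coordinate block at a time, the supremum $S$ satisfies $S\le\max_{\text{net}}|\cdot|+m\delta S$. We take $\delta=1/(2m)$, so $S\le2\max_{\text{net}}$. An $\ell_\infty$ net of $\mathbb T^{v}$ of mesh $\delta$ has cardinality $(C/\delta)^{v}$. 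The total net size is therefore $\exp(C'_m\sum_j v_j)\le\exp(C'_m H_n(F))$.

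\textbf{Step 3: the union bound.} Take $t=C\sqrt{H_n(F)}$ with $C$ large depending on $m$. The failure probability for a given trace, after the union over its net, is at most $\exp((C'_m-C^2/4)H_n(F))\le e^{-3H_n(F)}$. Summing over traces grouped by the cardinality vector $(v_j)$, each group has at most $e^{H_n}$ members, and $H_n=1+\sum_j v_j\log(en/v_j)$ depends only on $(v_j)$. The total failure probability is therefore at most
\[
 \sum_{(v_j)}e^{-2H_n}\le e^{-2}\prod_{j=1}^m\sum_{v\ge1}e^{-2v\log(en/v)}.
\]
Since $\log(en/v)\ge1$, each inner sum is at most $\sum_{v\ge1}e^{-2v}<1$. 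The total failure probability is thus below $e^{-2}<1$, and a good sign choice exists. This yields \eqref{mult:eq:rect-sign-estimate} with $C_m=2C$.

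The main obstacle is Step 2: the net argument must give an entropy cost linear in $\sum_j v_j$, and hence bounded by $H_n(F)$, with constants independent of $n$ and of $M(F)$. The one-block-at-a-time telescoping with mesh $1/(2m)$ handles this, since the supremum over the full polydisc bounds each partial difference. We must also take care that only the sets $\pi_j(F)$, and not the full $[n]$, carry test variables. This is exactly what the formulation of \eqref{mult:eq:rect-sign-estimate} permits.
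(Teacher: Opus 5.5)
Your argument is correct and is essentially the paper's proof. Both use random Rademacher signs, Hoeffding for a fixed test tuple, reduction to the torus, an $\ell_\infty$-net of mesh of order $1/m$ with one-block-at-a-time telescoping, and a union bound in which the entropy cost $\prod_j\binom{n}{v_j}\cdot C_m^{\sum_j v_j}\le e^{C'_mH_n(F)}$ is absorbed by the subgaussian tail at level $\sqrt{M(F)H_n(F)}$.

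The only difference is the final sum over cardinality vectors. You factorize it as $e^{-2}\prod_j\sum_{v=1}^{n}e^{-2v\log(en/v)}$; the inner sums must run only over $1\le v\le n$, where $\log(en/v)\ge1$. The paper instead absorbs the $n^m$ vectors through $H_n(F)\ge 1+m\log(en)$. You dismissed this bound as unavailable, but it holds because $v\mapsto v\log(en/v)$ is increasing on $[1,n]$. Either route closes the argument.
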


\begin{proof}
Choose $(\varepsilon_{\mathbf i})_{\mathbf i\in\Lambda}$ independently with
values $\pm1$ and equal probabilities. We show that
\eqref{mult:eq:rect-sign-estimate} holds simultaneously with positive
probability.

Fix an effective rectangular trace
\[
 F=\Lambda\cap(B_1\times\cdots\times B_m),
 \qquad \pi_j(F)=B_j,
\]
and write $v_j:=|B_j|$ and $M:=|F|$. Let
\[
 S_F(z^{(1)},\ldots,z^{(m)})
 :=\sum_{\mathbf i\in F}\varepsilon_{\mathbf i}
 z^{(1)}_{i_1}\cdots z^{(m)}_{i_m}.
\]
For fixed $z^{(j)}$ with $|z_i^{(j)}|\le1$, Hoeffding's inequality, applied
to the real and imaginary parts, gives
\begin{equation}\label{mult:eq:hoeffding}
 \Pp\{|S_F(z^{(1)},\ldots,z^{(m)})|>t\}
 \le4\exp\left(-c\frac{t^2}{M}\right)
\end{equation}
with an absolute constant $c>0$.

By the maximum modulus principle in each group of variables, the supremum in
\eqref{mult:eq:rect-sign-estimate} can be taken over the torus
$\T^{B_1}\times\cdots\times\T^{B_m}$. Fix
$\delta=(4m)^{-1}$ and choose a $\delta$-net $\mathcal N_j$ of
$\T^{B_j}$ in the sup norm with
\begin{equation}\label{mult:eq:net-size}
 |\mathcal N_j|\le C_m^{v_j}.
\end{equation}
If $K_F$ denotes the left-hand side of \eqref{mult:eq:rect-sign-estimate},
choose, for each $z^{(j)}\in\mathbb T^{B_j}$, a point
$w^{(j)}\in\mathcal N_j$ with
$\|z^{(j)}-w^{(j)}\|_\infty\le\delta$. Multilinearity and telescoping one
coordinate group at a time give
\[
 |S_F(z^{(1)},\ldots,z^{(m)})|
 \le |S_F(w^{(1)},\ldots,w^{(m)})|+m\delta K_F.
\]
Taking the supremum over the torus and using $m\delta=1/4$ yields
$K_F\le M_F+K_F/4$, where
$M_F:=\max_{w^{(j)}\in\mathcal N_j}|S_F(w^{(1)},\ldots,w^{(m)})|$. Hence
$K_F\le(4/3)M_F$, and in particular
\begin{equation}\label{mult:eq:net-reduction}
 K_F\le2
 \max_{z^{(j)}\in\mathcal N_j}
 |S_F(z^{(1)},\ldots,z^{(m)})|.
\end{equation}

For fixed projection sizes $v_1,\ldots,v_m$, set
\[
 S(v_1,\ldots,v_m)
 :=\sum_{j=1}^m v_j\log\frac{en}{v_j}.
\]
The number of possible effective tuples $(B_1,\ldots,B_m)$ is at most
\begin{equation}\label{mult:eq:number-rectangles}
 N_{\mathrm{rect}}
 \le\prod_{j=1}^m\binom{n}{v_j}
 \le e^{S(v_1,\ldots,v_m)}.
\end{equation}
By \eqref{mult:eq:net-size}, for each such tuple the number of choices of net
points is at most
\begin{equation}\label{mult:eq:number-net-points}
 N_{\mathrm{net}}
 \le C_m^{v_1+\cdots+v_m}
 =\exp\bigl((\log C_m)(v_1+\cdots+v_m)\bigr).
\end{equation}
Since $1\le v_j\le n$,
\[
 \log\frac{en}{v_j}\ge1,
\]
and therefore
\begin{equation}\label{mult:eq:v-sum-below-S}
 v_1+\cdots+v_m
 \le S(v_1,\ldots,v_m).
\end{equation}
Thus the entire combinatorial cost of choosing an effective rectangle and
one point of each net is at most
\[
 N_{\mathrm{rect}}N_{\mathrm{net}}
 \le\exp\bigl(C'_m S(v_1,\ldots,v_m)\bigr)
 \le\exp\bigl(C'_m H(v_1,\ldots,v_m)\bigr)
\]
for a constant $C'_m>0$ depending only on $m$.

Set
\[
 H(v_1,\ldots,v_m):=1+S(v_1,\ldots,v_m)
\]
and choose
\[
 t=K_m\sqrt{M H(v_1,\ldots,v_m)}.
\]
By \eqref{mult:eq:net-reduction}, failure of
\eqref{mult:eq:rect-sign-estimate} with constant $2K_m$ implies that for some
net point
\[
 |S_F(z^{(1)},\ldots,z^{(m)})|>K_m\sqrt{MH(v_1,\ldots,v_m)}.
\]
For each fixed net point, \eqref{mult:eq:hoeffding} gives probability at most
\[
 4e^{-cK_m^2H(v_1,\ldots,v_m)}.
\]
Multiplying this tail by the bounds
\eqref{mult:eq:number-rectangles} and \eqref{mult:eq:number-net-points}, we obtain
\begin{align}
 &\Pp\Bigl\{\text{\eqref{mult:eq:rect-sign-estimate} fails for some effective
 rectangle with these sizes}\Bigr\}\notag\\
 &\qquad\le
 4\exp\Bigl(
 -cK_m^2H(v_1,\ldots,v_m)
 +C'_mH(v_1,\ldots,v_m)
 \Bigr)\notag\\
 &\qquad=
 4\exp\Bigl(-\bigl(cK_m^2-C'_m\bigr)
 H(v_1,\ldots,v_m)\Bigr).
 \label{mult:eq:sign-union-fixed-sizes}
\end{align}
Choose $K_m$ so large that
\[
 B_m:=cK_m^2-C'_m>2m+4+\log4.
\]
Since $H(v_1,\ldots,v_m)\ge1$, the factor $4$ can be absorbed:
\[
 4e^{-B_mH}
 \le e^{-(B_m-\log4)H}.
\]
Put
\[
 A_m:=B_m-\log4>2m+4.
\]
Then
\begin{equation}\label{mult:eq:sign-fixed-decay}
 \Pp\Bigl\{\text{failure for some effective rectangle with the fixed
 sizes }(v_1,\ldots,v_m)\Bigr\}
 \le e^{-A_m H(v_1,\ldots,v_m)}.
\end{equation}

It remains to sum over the size tuples. For $1\le v\le n$, the function
$v\mapsto v\log(en/v)$ is increasing, hence every nonempty trace satisfies
\[
 H(v_1,\ldots,v_m)\ge 1+m\log(en).
\]
There are at most $n^m$ tuples $(v_1,\ldots,v_m)$. Therefore, by
\eqref{mult:eq:sign-fixed-decay},
\begin{align*}
 &\Pp\Bigl\{\text{\eqref{mult:eq:rect-sign-estimate} fails for at least one
 nonempty rectangular trace}\Bigr\}\\
 &\qquad\le
 n^m\exp\bigl(-A_m(1+m\log(en))\bigr)\\
 &\qquad= e^{-A_m(1+m)}\,n^{m-A_m m}.
\end{align*}
Because $A_m>2m+2$, this quantity is strictly smaller than $1$ for every
$n\ge1$. Hence there is a deterministic choice of signs for which
\eqref{mult:eq:rect-sign-estimate} holds simultaneously for all nonempty
rectangular traces.
\end{proof}

\section{From local sparsity to multilinear norm bounds}\label{mult:sec:dyadic}

We now pass from the combinatorial estimate
\eqref{mult:eq:local-sparsity} to the norm of an $m$-linear form. The only
additional ingredient is a dyadic decomposition by decreasing coordinate
size.

\begin{proposition}\label{mult:prop:local-to-norm}
Let $m\ge2$, $p>2$, $\rho>0$, and let
$\Lambda\subset[n]^m$ be $(p,\rho)$-locally sparse. Then there exist signs
$\varepsilon_{\mathbf i}\in\{-1,1\}$ such that the $m$-linear form
\begin{equation}\label{mult:eq:T-Lambda}
 T_\Lambda(x^{(1)},\ldots,x^{(m)})
 :=\sum_{\mathbf i\in\Lambda}\varepsilon_{\mathbf i}
   x^{(1)}_{i_1}\cdots x^{(m)}_{i_m}
\end{equation}
satisfies
\begin{equation}\label{mult:eq:norm-from-local}
 \|T_\Lambda\|_{\mathcal L(^m\ell_p^n)}
 \le C_{m,p}\, n^{\rho/2}(1+\log n)^m.
\end{equation}
\end{proposition}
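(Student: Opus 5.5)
The plan is to fix the signs given by the Rectangular sign lemma (Proposition~\ref{mult:prop:sign-lemma}) applied to $\Lambda$. Every nonempty rectangular trace $F$ then has signed sup-norm at most $C_m\sqrt{M(F)H_n(F)}$ over unimodular (or unit-bounded) inputs. Local sparsity turns this into
\[
 C_m\sqrt{M(F)H_n(F)}\le C_m n^{\rho/2}P(F)^{1/p}.
\]
Fix $x^{(j)}$ in the unit ball of $\ell_p^n$. We may assume $n$ is large, since small $n$ is absorbed in $C_{m,p}$. Discard the coordinates with $|x^{(j)}_i|<n^{-m}$: there are at most $n^m$ terms, each of size $<n^{-m}$, so their total contribution is $O(1)$ and we may drop them.

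Next we decompose dyadically. For each $j$ and $k=0,1,\dots,\lceil m\log_2 n\rceil$, let
\[
 A_{j,k}=\{i:2^{-k-1}<|x^{(j)}_i|\le 2^{-k}\}.
\]
Since $\|x^{(j)}\|_p\le1$, we have $|A_{j,k}|\le 2^{(k+1)p}$. Write $x^{(j)}=\sum_k x^{(j)}\mathbf 1_{A_{j,k}}$ and expand $T_\Lambda$ multilinearly into at most $(1+m\log_2 n)^m\lesssim(1+\log n)^m$ pieces indexed by $(k_1,\dots,k_m)$.

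Each piece is a sum over the rectangular trace $F=\Lambda\cap(A_{1,k_1}\times\cdots\times A_{m,k_m})$, with inputs $2^{-k_j}z^{(j)}$ where $|z^{(j)}_i|\le1$. The sign lemma together with local sparsity then bounds the piece by
\[
 C_m n^{\rho/2}\prod_j 2^{-k_j}v_j(F)^{1/p}\le C_m n^{\rho/2}\prod_j 2^{-k_j}|A_{j,k_j}|^{1/p}\le C_m 2^m n^{\rho/2},
\]
using $v_j(F)\le|A_{j,k_j}|\le 2^{(k_j+1)p}$. Summing over the $\lesssim(1+\log n)^m$ level tuples gives \eqref{mult:eq:norm-from-local}.

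The main point to check is that the inputs to the sign lemma are legitimate. The lemma is stated for suprema over $|z_i^{(j)}|\le1$ with $i\in\pi_j(F)$. Our piece uses the restriction of $2^{k_j}x^{(j)}$ to $A_{j,k_j}$, and only the indices in $\pi_j(F)$ matter by \eqref{mult:eq:effective-rectangle}. So the lemma applies directly, and its bound is in terms of $P(F)$ rather than $\prod|A_{j,k}|$, which only helps.

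Two secondary details need care. First, the tail cutoff must be at $n^{-m}$ rather than a cruder level, so that it costs only $O(1)$ while keeping the number of dyadic levels $O(\log n)$. Second, real and complex scalars are both covered, because the lemma handles complex unimodular inputs. Note that $p>2$ is not actually used here beyond the hypothesis of local sparsity.
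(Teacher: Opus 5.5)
Your proposal is correct and follows the paper's argument: signs from Proposition~\ref{mult:prop:sign-lemma}, a dyadic splitting of each $x^{(j)}$ into $O(\log n)$ blocks, and on each block trace the sign-lemma bound combined with local sparsity and $v_j(F)\le|A_{j,k_j}|$, summed over $O\bigl((1+\log n)^m\bigr)$ block tuples. The only difference is that you split by dyadic value levels, which forces the harmless $n^{-m}$ cutoff costing at most $|\Lambda|n^{-m}\le1$. The paper instead splits by dyadic rank blocks of the nonincreasing rearrangement, which automatically gives $\lceil\log_2 n\rceil+1$ blocks and the bound $|x^{(j)}_i|\le C_p|A_{j,k}|^{-1/p}$ with no truncation needed.
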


\begin{proof}
Choose the signs given by Proposition~\ref{mult:prop:sign-lemma}. Fix
$x^{(j)}\in B_{\ell_p^n}$, $1\le j\le m$. For each $j$, rearrange the
coordinates of $x^{(j)}$ in nonincreasing order of modulus and partition the
corresponding indices into consecutive rank blocks
\[
 A_{j,0},A_{j,1},\ldots,A_{j,L_j},
\]
where $|A_{j,0}|=1$, and for $k\ge1$ the block $A_{j,k}$ contains ranks
$2^{k-1}+1$ through $\min\{2^k,n\}$. Thus
$L_j\le\lceil\log_2 n\rceil$, and for $i\in A_{j,k}$,
\begin{equation}\label{mult:eq:rank-bound}
 |x_i^{(j)}|
 \le C_p |A_{j,k}|^{-1/p}.
\end{equation}
Indeed, if the $r$-th largest coordinate had modulus larger than
$r^{-1/p}$, the $\ell_p$ norm would exceed $1$.

Fix a multi-index of blocks $\mathbf k=(k_1,\ldots,k_m)$ and set
\[
 F_{\mathbf k}
 :=\Lambda\cap
 (A_{1,k_1}\times\cdots\times A_{m,k_m}).
\]
If $F_{\mathbf k}=\varnothing$, its contribution is zero. Otherwise write
\[
 v_j:=|\pi_j(F_{\mathbf k})|,
 \qquad P:=\prod_{j=1}^m v_j,
 \qquad M:=|F_{\mathbf k}|.
\]
Since $v_j\le |A_{j,k_j}|$, \eqref{mult:eq:rank-bound} and the rectangular sign
estimate give
\begin{align}
 &\left|
 \sum_{\mathbf i\in F_{\mathbf k}}
 \varepsilon_{\mathbf i}
 x^{(1)}_{i_1}\cdots x^{(m)}_{i_m}
 \right|\notag\\
 &\qquad\le
 C_{m,p}\prod_{j=1}^m|A_{j,k_j}|^{-1/p}
 \sqrt{M H_n(F_{\mathbf k})}\notag\\
 &\qquad\le
 C_{m,p}P^{-1/p}\sqrt{M H_n(F_{\mathbf k})}.
 \label{mult:eq:block-bound-before-local}
\end{align}
The local sparsity of $\Lambda$ now yields
\[
 M H_n(F_{\mathbf k})\le n^\rho P^{2/p},
\]
and therefore
\begin{equation}\label{mult:eq:block-final}
 \left|
 \sum_{\mathbf i\in F_{\mathbf k}}
 \varepsilon_{\mathbf i}
 x^{(1)}_{i_1}\cdots x^{(m)}_{i_m}
 \right|
 \le C_{m,p}n^{\rho/2}.
\end{equation}

There are at most
$(1+\lceil\log_2n\rceil)^m$ choices of $\mathbf k$. Summing
\eqref{mult:eq:block-final} over the blocks gives
\eqref{mult:eq:norm-from-local}.
\end{proof}

\begin{remark}\label{mult:rem:rho-epsilon}
The factor $(1+\log n)^m$ does not affect the value of the invariant in
Definition~\ref{mult:def:gamma-mult}. Given $\eta>0$, one first chooses
$\rho<\eta$ and then absorbs the logarithmic factor into $n^{\eta-\rho/2}$.
No endpoint estimate at
$p=\min\{m-\lceil d\rceil+1,2m/(d+1)\}$ is required, since
$\gamma_{\mathrm{mult}}$ is defined as a supremum.
\end{remark}

\section{The sharp invariant}\label{mult:sec:main-proof}

We can now prove Theorem~\ref{mult:thm:B}.

\begin{proof}[Proof of Theorem~\ref{mult:thm:B}]
Bayart's Proposition~5.3 gives the upper bound
\eqref{mult:eq:Bayart-upper}, so only the reverse inequality is required.

If $d\in(m-2,m]$, the desired equality is already contained in
\cite[Corollary~5.4]{BayartJEMS}: the value is $2$ for
$d\in(m-2,m-1]$ and $1$ for $d\in(m-1,m]$. These are exactly the values of
the right-hand side of \eqref{mult:eq:main-formula}.

Assume now that $d\in[1,m-2]$. Set
\[
 G(m,d):=
 \min\left\{m-\lceil d\rceil+1,\frac{2m}{d+1}\right\}.
\]
Then $G(m,d)>2$. Fix any
\[
 2<p<G(m,d)
\]
and any $\eta>0$. Choose $\rho>0$ so small that $\rho<\eta$.
By Proposition~\ref{mult:prop:random-support}, for all sufficiently large $n$
there exists a $(p,\rho)$-locally sparse set
$\Lambda_n\subset[n]^m$ with
\[
 |\Lambda_n|\ge c n^d.
\]
Proposition~\ref{mult:prop:local-to-norm} provides signs
$\varepsilon_{\mathbf i}\in\{-1,1\}$ such that
\[
 \|T_n\|_{\mathcal L(^m\ell_p^n)}
 \le C_{m,p}n^{\rho/2}(1+\log n)^m
 \le C_{m,p,\eta}n^\eta.
\]
After modifying the constant to cover the finitely many smaller values of
$n$, Definition~\ref{mult:def:gamma-mult} gives
$p\in\Gamma_{\mathrm{mult}}(m,d)$. Since this holds for every
$2<p<G(m,d)$,
\[
 \gamma_{\mathrm{mult}}(m,d)\ge G(m,d).
\]
Together with \eqref{mult:eq:Bayart-upper}, this proves
\eqref{mult:eq:main-formula}.
\end{proof}

\begin{proof}[Proof of Corollary~\ref{mult:cor:noninteger}]
By Theorem~\ref{mult:thm:B},
\begin{equation}\label{mult:eq:four-two-eight-thirds}
 \gamma_{\mathrm{mult}}(4,2)
 =\min\left\{4-2+1,\frac{2\cdot4}{2+1}\right\}
 =\min\left\{3,\frac83\right\}
 =\frac83\notin\mathbb Z.
\end{equation}
Thus Bayart's Question~5.7 has a negative answer.
\end{proof}

The two extremal problems from \cite[Problems~5.1 and~5.2]{BayartJEMS} can
now be displayed side by side. By Theorem~\ref{thm:A},
\[
 \mathrm{prod}(m,d)
 =\min\left\{\frac md,m-\lceil d\rceil+1\right\},
\]
while Theorem~\ref{mult:thm:B} gives
\[
 \gamma_{\mathrm{mult}}(m,d)
 =\min\left\{\frac{2m}{d+1},m-\lceil d\rceil+1\right\}.
\]
For $d>1$ one has
\[
 \frac{2m}{d+1}>\frac md.
\]
Consequently, whenever
\begin{equation}\label{mult:eq:strict-gap-condition}
 \frac md<m-\lceil d\rceil+1,
\end{equation}
the two invariants are strictly separated:
\begin{equation}\label{mult:eq:strict-gap}
 \mathrm{prod}(m,d)
 <\gamma_{\mathrm{mult}}(m,d).
\end{equation}
Thus cancellation changes the optimal exponent precisely in the range where
the dimensional obstruction is active before the section obstruction.

\begin{remark}\label{mult:rem:mechanism-final}
Proposition~\ref{mult:prop:random-support} shows that the two terms in Bayart's
upper bound are the only polynomial obstructions to keeping
\[
 \frac{M(F)H_n(F)}{P(F)^{2/p}}
\]
subpolynomial on a support with $n^d$ points.
\end{remark}

\endgroup

\section{Sparse supports for Hardy--Littlewood inequalities}\label{sec:sharp-HL-sparse}

For a normed space $X$, write $B_X:=\{x\in X:\|x\|_X\le1\}$.

\subsection{Real and complex scalar fields}

The critical exponent is independent of the scalar field.

{
\begin{lemma}\label{lem:complexification-HL}
Let $\Lambda\subset\mathbb{N}^m$ be infinite and let $\mathbf p\in[1,\infty]^m$. Then
\[
 \mathrm{HL}_{\mathbb{R}}(\Lambda;\mathbf p)
 \le \mathrm{HL}_{\mathbb C}(\Lambda;\mathbf p).
\]
More precisely, every exponent admissible over $\mathbb C$ is admissible over $\mathbb{R}$, with the admissibility constant multiplied by at most $2^m$.
\end{lemma}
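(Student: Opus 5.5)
The plan is to complexify real forms and compare norms. Suppose $s$ is admissible over $\mathbb C$ with constant $C$. Let
\[
 T:Z_{p_1}(\mathbb R)\times\cdots\times Z_{p_m}(\mathbb R)\to\mathbb R
\]
be a continuous $m$-linear form. I will define its complexification $T_{\mathbb C}$ on $Z_{p_1}(\mathbb C)\times\cdots\times Z_{p_m}(\mathbb C)$ by multilinear expansion: writing $z^{(j)}=x^{(j)}+iy^{(j)}$ with $x^{(j)},y^{(j)}$ real, set $T_{\mathbb C}(z^{(1)},\ldots,z^{(m)})$ to be the sum over all $2^m$ choices of real or imaginary part in each slot, each term weighted by the corresponding power of $i$. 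This form is $\mathbb C$-multilinear. It agrees with $T$ on the canonical basis vectors, so
\[
 T_{\mathbb C}(e_{i_1},\ldots,e_{i_m})=T(e_{i_1},\ldots,e_{i_m})
 \quad\text{and hence}\quad
 \|T_{\mathbb C}\|_{\Lambda,s}=\|T\|_{\Lambda,s}.
\]

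The key step is the norm bound $\|T_{\mathbb C}\|\le 2^m\|T\|$. In $Z_p(\mathbb C)$ we have $|\operatorname{Re}z_n|\le|z_n|$ and $|\operatorname{Im}z_n|\le|z_n|$ coordinatewise, which gives
\[
 \|x^{(j)}\|_{Z_{p_j}}\le\|z^{(j)}\|,\qquad \|y^{(j)}\|_{Z_{p_j}}\le\|z^{(j)}\|,
\]
including the $c_0$ case $p=\infty$. Each of the $2^m$ terms in the expansion is then bounded by $\|T\|\prod_j\|z^{(j)}\|$. The triangle inequality gives the factor $2^m$. Continuity of $T_{\mathbb C}$ follows as well. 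The expansion also shows that real and imaginary parts of elements of $Z_p(\mathbb C)$ lie in $Z_p(\mathbb R)$, so $T_{\mathbb C}$ is well defined.

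Combining the two steps,
\[
 \|T\|_{\Lambda,s}=\|T_{\mathbb C}\|_{\Lambda,s}\le C\|T_{\mathbb C}\|\le 2^mC\|T\|.
\]
So $s$ is admissible over $\mathbb R$ with constant $2^mC$. Taking the infimum over admissible $s$ yields $\mathrm{HL}_{\mathbb R}\le\mathrm{HL}_{\mathbb C}$. If no $s$ is admissible over $\mathbb C$, the inequality is trivial by the convention $\inf\varnothing=\infty$.

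There is no serious obstacle here. The only point needing care is confirming that $T_{\mathbb C}$ is genuinely $\mathbb C$-multilinear: it is additive and real-homogeneous in each slot, and a check on multiplication by $i$ in one slot suffices. The crude constant $2^m$ is all that the statement requires.
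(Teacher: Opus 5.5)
Your proposal is correct and matches the paper's proof: the paper also forms the canonical complexification by expanding each $z_j=x_j+iy_j$ over the $2^m$ real/imaginary choices, bounds its norm by $2^m\|T\|$ via $\|x_j\|,\|y_j\|\le\|z_j\|$, and uses that it coincides with $T$ on the (real) canonical basis vectors to get $\|T\|_{\Lambda,s}\le 2^mC_s\|T\|$. Your extra checks ($\mathbb C$-multilinearity, the $c_0$ case, and the convention $\inf\varnothing=\infty$) are routine details the paper leaves implicit.
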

}

\begin{proof}
Let
\[
 T:Z_{p_1}(\mathbb{R})\times\cdots\times Z_{p_m}(\mathbb{R})\longrightarrow\mathbb{R}
\]
be a continuous real $m$-linear form. Write $z_j=x_j+iy_j$ with
$x_j,y_j\in Z_{p_j}(\mathbb{R})$, and define the canonical complexification
\[
 \widetilde T(z_1,\ldots,z_m)
 :=\sum_{\varepsilon\in\{0,1\}^m}
 i^{|\varepsilon|}
 T\bigl(u_1^{(\varepsilon_1)},\ldots,u_m^{(\varepsilon_m)}\bigr),
\]
where $u_j^{(0)}=x_j$ and $u_j^{(1)}=y_j$. Since
\[
 \|x_j\|_{p_j}\le \|z_j\|_{p_j},
 \qquad
 \|y_j\|_{p_j}\le \|z_j\|_{p_j},
\]
with the same statement for $p_j=\infty$, we obtain
\[
 \|\widetilde T\|\le 2^m\|T\|.
\]
Moreover, the canonical basis vectors are real, so
\[
 \widetilde T(e_{i_1},\ldots,e_{i_m})
 =T(e_{i_1},\ldots,e_{i_m}).
\]
Hence, if $s$ is admissible over $\mathbb C$ with constant $C_s$, then
\[
 \|T\|_{\Lambda,s}
 =\|\widetilde T\|_{\Lambda,s}
 \le C_s\|\widetilde T\|
 \le 2^m C_s\|T\|.
\]
Thus $s$ is also admissible over $\mathbb{R}$.
\end{proof}

{
\begin{proposition}[Scalar-field invariance of the critical exponent]\label{prop:HL-scalar-invariance}
Let $\Lambda\subset\mathbb{N}^m$ be infinite and let
$\mathbf p\in[1,\infty]^m$. Then
\[
 \mathrm{HL}_{\mathbb{R}}(\Lambda;\mathbf p)
 =
 \mathrm{HL}_{\mathbb C}(\Lambda;\mathbf p).
\]
\end{proposition}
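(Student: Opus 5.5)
Lemma~\ref{lem:complexification-HL} already gives $\mathrm{HL}_{\mathbb R}(\Lambda;\mathbf p)\le\mathrm{HL}_{\mathbb C}(\Lambda;\mathbf p)$, so the plan is to prove the converse. We will show that every exponent $s$ admissible over $\mathbb R$ is admissible over $\mathbb C$, with the constant multiplied by at most a factor depending only on $m$. Taking infima over $s$ then gives equality, including the case where one side is $\infty$.

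The reduction is to split a complex form into real forms. Let $T$ be a continuous complex $m$-linear form on $Z_{p_1}(\mathbb C)\times\cdots\times Z_{p_m}(\mathbb C)$. Restricting $T$ to real vectors gives a real $m$-linear map $T_0:Z_{p_1}(\mathbb R)\times\cdots\times Z_{p_m}(\mathbb R)\to\mathbb C$. Write $T_0=R+iI$, where $R:=\operatorname{Re}T_0$ and $I:=\operatorname{Im}T_0$ are real-valued continuous $m$-linear forms. Real vectors are complex vectors of the same norm, since $\|x\|_{Z_p(\mathbb C)}=\|x\|_{Z_p(\mathbb R)}$ for real $x$, also for $p=\infty$ in $c_0$. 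Hence $\|R\|,\|I\|\le\|T\|$. The canonical basis vectors are real, so
\[
 T(e_{i_1},\ldots,e_{i_m})=R(e_{i_1},\ldots,e_{i_m})+iI(e_{i_1},\ldots,e_{i_m}).
\]

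Admissibility over $\mathbb R$ with constant $C_s$ then finishes the argument. Using $|a+ib|\le|a|+|b|$ and Minkowski's inequality in $\ell_s(\Lambda)$,
\[
 \|T\|_{\Lambda,s}\le\|R\|_{\Lambda,s}+\|I\|_{\Lambda,s}\le C_s(\|R\|+\|I\|)\le 2C_s\|T\|.
\]
So $s$ is admissible over $\mathbb C$ with constant $2C_s$, which gives $\mathrm{HL}_{\mathbb C}\le\mathrm{HL}_{\mathbb R}$.

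No step is a serious obstacle. The only point needing care is that the operator norm of the restriction is bounded by $\|T\|$, i.e.\ that the real unit ball sits isometrically inside the complex one. This holds coordinatewise for both $\ell_p$ and $c_0$, so the reduction is legitimate for every $\mathbf p\in[1,\infty]^m$.
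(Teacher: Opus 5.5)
Your proposal is correct and follows essentially the same route as the paper: you take the inequality $\mathrm{HL}_{\mathbb R}\le\mathrm{HL}_{\mathbb C}$ from Lemma~\ref{lem:complexification-HL}, and for the converse you split the restriction of $T$ to real vectors into real and imaginary real forms of norm at most $\|T\|$, then apply Minkowski's inequality to obtain the constant $2C_s$. Your remark that the real unit balls of $\ell_p$ and $c_0$ sit isometrically inside the complex ones correctly justifies the bounds $\|R\|,\|I\|\le\|T\|$, a point the paper leaves implicit.
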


\begin{proof}
Lemma~\ref{lem:complexification-HL} gives
$\mathrm{HL}_{\mathbb{R}}(\Lambda;\mathbf p)
\le \mathrm{HL}_{\mathbb C}(\Lambda;\mathbf p)$.
For the converse, let $s$ be admissible over $\mathbb{R}$ with constant $C_s$, and let
\[
 T:Z_{p_1}(\mathbb C)\times\cdots\times Z_{p_m}(\mathbb C)\longrightarrow\mathbb C
\]
be continuous and $m$-linear. On real vectors write
$T=T_1+iT_2$, where $T_1$ and $T_2$ are real $m$-linear forms. Since
$\|T_1\|,\|T_2\|\le\|T\|$, Minkowski's inequality gives
\[
 \|T\|_{\Lambda,s}
 \le \|T_1\|_{\Lambda,s}+\|T_2\|_{\Lambda,s}
 \le C_s(\|T_1\|+\|T_2\|)
 \le 2C_s\|T\|.
\]
Thus every exponent admissible over $\mathbb{R}$ is admissible over $\mathbb C$, and the two infima coincide.
\end{proof}
}

\subsection{Finite-block estimates}

For a finite set $E\subset[N]^m$, define the $m$-linear form
\begin{equation}\label{eq:def-AE}
 \mathcal A_E:(\mathbb K^N)^m\longrightarrow\mathbb K,\qquad
 \mathcal A_E(y^{(1)},\ldots,y^{(m)})
 :=\sum_{\mathbf i\in E}\prod_{j=1}^m y^{(j)}_{i_j}.
\end{equation}

Recall that, for $E\subset[N]^m$, $\Delta(E)$ denotes the largest cardinality of a one-coordinate fiber of $E$.

\begin{lemma}\label{lem:HL-simplex}
Let $N\in\mathbb{N}$, $1\le d\le m$, $C_0,C_1>0$, and let
$E\subset[N]^m$ satisfy
\[
 |E|\le C_0N^d,\qquad \Delta(E)\le C_1N^{d-1}.
\]
If $q_1,\ldots,q_m\in[1,\infty]$ and $\sum_j1/q_j\le1$, then
\begin{equation}\label{eq:HL-simplex}
 \sup_{\substack{y^{(j)}\in B_{\ell_{q_j}^N}\\1\le j\le m}}
 |\mathcal A_E(y^{(1)},\ldots,y^{(m)})|
 \le C N^{d-\sum_j1/q_j},
\end{equation}
where $C$ depends only on $m,C_0,C_1$.
\end{lemma}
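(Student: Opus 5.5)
The plan is to avoid interpolation and prove \eqref{eq:HL-simplex} directly with one application of H\"older's inequality over the finite index set $E$. This works equally over $\mathbb R$ and $\mathbb C$, since only moduli enter.

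Put $\theta_j:=1/q_j\in[0,1]$, with $\theta_j=0$ when $q_j=\infty$, and $\theta_0:=1-\sum_j\theta_j\ge0$. Fix $y^{(j)}\in B_{\ell_{q_j}^N}$. By the triangle inequality it suffices to bound
\[
 S:=\sum_{\mathbf i\in E}\prod_{j=1}^m|y^{(j)}_{i_j}|.
\]
For indices with $\theta_j=0$ we use $|y^{(j)}_{i_j}|\le1$ and simply drop those factors. For $\theta_j>0$ we write $|y^{(j)}_{i_j}|=\bigl(|y^{(j)}_{i_j}|^{q_j}\bigr)^{\theta_j}$. Then
\[
 S\le\sum_{\mathbf i\in E}1^{\theta_0}\prod_{j:\theta_j>0}\bigl(|y^{(j)}_{i_j}|^{q_j}\bigr)^{\theta_j}.
\]
The weights $\theta_0$ and $\theta_j$ ($\theta_j>0$) sum to $1$, so the generalized H\"older inequality on $E$ with exponents $1/\theta_0$ and $1/\theta_j$ gives
\[
 S\le |E|^{\theta_0}\prod_{j:\theta_j>0}\Bigl(\sum_{\mathbf i\in E}|y^{(j)}_{i_j}|^{q_j}\Bigr)^{\theta_j}.
\]
If $\theta_0=0$, the first factor is omitted.

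Next we estimate each inner sum by grouping $\mathbf i$ according to its $j$-th coordinate $r=i_j$:
\[
 \sum_{\mathbf i\in E}|y^{(j)}_{i_j}|^{q_j}
 =\sum_{r=1}^N|E_{j,r}|\,|y^{(j)}_r|^{q_j}
 \le\Delta(E)\,\|y^{(j)}\|_{q_j}^{q_j}
 \le C_1N^{d-1}.
\]
Combining this with $|E|\le C_0N^d$ yields
\[
 S\le C_0^{\theta_0}C_1^{1-\theta_0}\,N^{d\theta_0+(d-1)(1-\theta_0)}
 =C_0^{\theta_0}C_1^{1-\theta_0}\,N^{d-\sum_j\theta_j}.
\]
The constant is at most $\max\{C_0,C_1\}$.

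There is no real obstacle. The only care needed is bookkeeping at the degenerate cases: exponents $q_j=\infty$ (dropped factors) and $\theta_0=0$ (the borderline $\sum_j1/q_j=1$). In both, the corresponding H\"older factor is simply absent, so the same estimate holds.
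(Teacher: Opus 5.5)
Your proof is correct, and it takes a different route from the paper's. The ingredients are the same, but the method differs.

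The paper proves the two endpoint bounds you use:
\begin{itemize}
\item the all-$\ell_\infty$ bound $|\mathcal A_E|\le|E|\le C_0N^d$;
\item for each $k$, the one-$\ell_1$ bound $|\mathcal A_E|\le\Delta(E)\|y^{(k)}\|_1\prod_{j\ne k}\|y^{(j)}\|_\infty$, obtained by grouping along the $k$th coordinate.
\end{itemize}
It then invokes multilinear complex interpolation over the simplex with vertices $0,e_1,\dots,e_m$ in reciprocal-exponent space. It adds a remark that passing to moduli makes the estimate independent of the scalar field.

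You skip the operator-level endpoints. Instead you apply a single weighted H\"older inequality on the finite set $E$ with weights $\theta_0,\theta_1,\dots,\theta_m$. You feed in $|E|$ for the constant function and the fiber-grouped bound $\sum_{\mathbf i\in E}|y^{(j)}_{i_j}|^{q_j}\le\Delta(E)\|y^{(j)}\|_{q_j}^{q_j}$ for each factor with $q_j<\infty$. This makes explicit the positivity mechanism that lets interpolation work for a nonnegative form.

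Your route is fully self-contained. It needs no multilinear interpolation across a simplex with $m+1$ vertices and no separate comment about real versus complex scalars. It produces the same constant $C_0^{\theta_0}C_1^{1-\theta_0}\le\max\{C_0,C_1\}$ directly, and it handles the degenerate cases $q_j=\infty$ and $\theta_0=0$ transparently.

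The paper's route gives a conceptual picture instead. There the exponent $d-\sum_j1/q_j$ appears as the affine interpolation of the vertex exponents $d$ and $d-1$ across the simplex.
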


\begin{proof}
At the all-$\ell_\infty$ endpoint,
\[
 \sup_{\substack{y^{(j)}\in B_{\ell_\infty^N}\\1\le j\le m}}
 |\mathcal A_E(y^{(1)},\ldots,y^{(m)})|\le |E|\le C_0N^d.
\]
If the $k$th factor is $\ell_1^N$ and the others are $\ell_\infty^N$, then
{
grouping the summation according to the $k$th coordinate gives
\begin{align*}
 |\mathcal A_E(y^{(1)},\ldots,y^{(m)})|
 &\le \sum_{r=1}^N |y_r^{(k)}|
 \sum_{\substack{\mathbf i\in E\\ i_k=r}}
 \prod_{j\ne k}|y_{i_j}^{(j)}|\\
 &\le \sum_{r=1}^N |y_r^{(k)}|\,|E_{k,r}|
 \prod_{j\ne k}\|y^{(j)}\|_\infty\\
 &\le \Delta(E)\|y^{(k)}\|_1
 \prod_{j\ne k}\|y^{(j)}\|_\infty.
\end{align*}
Consequently,}
\[
 |\mathcal A_E(y^{(1)},\ldots,y^{(m)})|
 \le \Delta(E)\|y^{(k)}\|_1\prod_{j\ne k}\|y^{(j)}\|_\infty
 \le C_1N^{d-1}\|y^{(k)}\|_1\prod_{j\ne k}\|y^{(j)}\|_\infty.
\]
The reciprocal-exponent vertices are
$(0,\ldots,0),e_1,\ldots,e_m$. Set
\[
 \theta_j:=\frac1{q_j}\quad(1\le j\le m),
 \qquad
 \theta_0:=1-\sum_{j=1}^m\theta_j.
\]
The assumption $\sum_j1/q_j\le1$ gives $\theta_0\ge0$, and
\[
 \theta_0+\theta_1+\cdots+\theta_m=1.
\]
Thus $(1/q_1,\ldots,1/q_m)$ is the convex combination
\[
 \theta_0(0,\ldots,0)+\sum_{j=1}^m\theta_j e_j.
\]
Multilinear complex interpolation between the all-$\ell_\infty$ estimate
and the $m$ estimates with one $\ell_1$ factor gives the power
\begin{align*}
 \theta_0d+\sum_{j=1}^m\theta_j(d-1)
 &=d\left(\theta_0+\sum_{j=1}^m\theta_j\right)
   -\sum_{j=1}^m\theta_j\\
 &=d-\sum_{j=1}^m\theta_j\\
&=d-\sum_{j=1}^m\frac1{q_j}.
\end{align*}
{More precisely, the $j$th domain space is obtained by
interpolating $\ell_\infty^N$ with $\ell_1^N$ with parameter
$\theta_j$, while the remaining weight $\theta_0$ corresponds to the
all-$\ell_\infty$ vertex.  The cases $q_j=\infty$ correspond to $\theta_j=0$. The estimate is
field-independent: for arbitrary scalar vectors,
\[
 |\mathcal A_E(y^{(1)},\ldots,y^{(m)})|
 \le \mathcal A_E(|y^{(1)}|,\ldots,|y^{(m)}|),
\]
and taking coordinatewise moduli preserves all the norms involved.}
{The interpolated operator norm is at most
$C_0^{\theta_0}C_1^{1-\theta_0}$, and hence at most
$\max\{C_0,C_1\}$. Together with the displayed interpolation identity $\theta_0d+\sum_{j=1}^m\theta_j(d-1)=d-\sum_{j=1}^m1/q_j$, this proves
\eqref{eq:HL-simplex}.}
\end{proof}

\begin{corollary}\label{cor:HL-quadratic}
Under the hypotheses of Lemma~\ref{lem:HL-simplex}, let
$\mathbf p\in[1,\infty]^m$ satisfy $|1/\mathbf p|\le1/2$. Then
\begin{equation}\label{eq:HL-quadratic}
 \sup_{\substack{x^{(j)}\in B_{\ell_{p_j}^N}\\1\le j\le m}}
 \left(\sum_{\mathbf i\in E}\prod_{j=1}^m|x^{(j)}_{i_j}|^2\right)^{1/2}
 \le C N^{d/2-|1/\mathbf p|}.
\end{equation}
\end{corollary}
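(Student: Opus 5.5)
The plan is to reduce Corollary~\ref{cor:HL-quadratic} directly to Lemma~\ref{lem:HL-simplex} by squaring. Fix $x^{(j)}\in B_{\ell_{p_j}^N}$ and set $y^{(j)}_i:=|x^{(j)}_i|^2$. Then
\[
 \sum_{\mathbf i\in E}\prod_{j=1}^m|x^{(j)}_{i_j}|^2
 =\mathcal A_E(y^{(1)},\ldots,y^{(m)}),
\]
and the left-hand side of \eqref{eq:HL-quadratic} is the square root of this quantity. It therefore suffices to bound $\mathcal A_E(y^{(1)},\ldots,y^{(m)})$ by $C N^{d-2|1/\mathbf p|}$.

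For this, put $q_j:=p_j/2\in[1/2,\infty]$. When $p_j\ge2$, we have $q_j\ge1$ and $\|y^{(j)}\|_{\ell_{q_j}}=\|x^{(j)}\|_{\ell_{p_j}}^2\le1$, so $y^{(j)}\in B_{\ell_{q_j}^N}$. The condition $\sum_j 1/q_j=2|1/\mathbf p|\le1$ holds exactly because $|1/\mathbf p|\le1/2$. Lemma~\ref{lem:HL-simplex} then gives
\[
 \mathcal A_E(y^{(1)},\ldots,y^{(m)})\le C N^{d-2|1/\mathbf p|}.
\]
Taking square roots yields \eqref{eq:HL-quadratic} with constant $C^{1/2}$.

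The remaining point is to check that every $p_j\ge2$, so that each $q_j\ge1$. Since $|1/\mathbf p|\le1/2$ and every term $1/p_j$ is nonnegative, each $1/p_j\le1/2$, hence $p_j\ge2$. The convention $q_j=\infty$ for $p_j=\infty$ is handled by the endpoint $\theta_j=0$ already covered in the lemma. The field independence is automatic, since only the nonnegative vectors $y^{(j)}$ enter. I do not expect a genuine obstacle here; the only care needed is the exponent bookkeeping $d-\sum_j 1/q_j=d-2|1/\mathbf p|$ and the final halving of that exponent under the square root, which gives $d/2-|1/\mathbf p|$.
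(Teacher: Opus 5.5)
Your proposal is correct and follows the paper's proof exactly: you substitute $y^{(j)}=|x^{(j)}|^2$ and take $q_j=p_j/2$, where $p_j\ge2$ is forced by $|1/\mathbf p|\le1/2$. Then you apply Lemma~\ref{lem:HL-simplex}, using $\sum_j1/q_j=2|1/\mathbf p|\le1$, and take square roots to get the constant $C^{1/2}$.
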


\begin{proof}
The hypothesis $|1/\mathbf p|\le1/2$ implies $p_j\ge2$ for every $j$. Put
\[
 q_j:=\frac{p_j}{2}\quad\text{if }p_j<\infty,
 \qquad q_j:=\infty\quad\text{if }p_j=\infty,
 \qquad y^{(j)}_r:=|x^{(j)}_r|^2.
\]
Then $q_j\in[1,\infty]$. If $p_j<\infty$, then
\begin{align*}
 \|y^{(j)}\|_{\ell_{q_j}^N}^{q_j}
 &=\sum_{r=1}^N |y^{(j)}_r|^{q_j}\\
 &=\sum_{r=1}^N |x^{(j)}_r|^{2q_j}\\
 &=\sum_{r=1}^N |x^{(j)}_r|^{p_j}
 =\|x^{(j)}\|_{\ell_{p_j}^N}^{p_j}
 \le1.
\end{align*}
If $p_j=\infty$, then
\[
 \|y^{(j)}\|_{\ell_\infty^N}
 =\max_{1\le r\le N}|x^{(j)}_r|^2
 =\|x^{(j)}\|_{\ell_\infty^N}^2
 \le1.
\]
Hence $\|y^{(j)}\|_{\ell_{q_j}^N}\le1$ for every $j$. With the convention
$1/\infty=0$, we also have
\[
 \sum_{j=1}^m\frac1{q_j}
 =\sum_{j=1}^m\frac2{p_j}
 =2\left|\frac1{\mathbf p}\right|
 \le1.
\]
Lemma~\ref{lem:HL-simplex} therefore gives
\[
 {
 \mathcal A_E(y^{(1)},\ldots,y^{(m)})
 =}
 \sum_{\mathbf i\in E}\prod_{j=1}^m|x^{(j)}_{i_j}|^2
 \le C N^{d-2|1/\mathbf p|},
\]
{Taking square roots and, if necessary, replacing
$C^{1/2}$ by $C$ proves the claim.}
\end{proof}

{
\begin{lemma}[Signed finite-block estimate]\label{lem:signed-HL-block}
Let $N\in\mathbb{N}$, $1\le d\le m$, and $C_0,C_1>0$. Suppose that
$E\subset[N]^m$ satisfies
\[
 |E|\le C_0N^d,
 \qquad
 \Delta(E)\le C_1N^{d-1}.
\]
If $\mathbf p\in[1,\infty]^m$ and $|1/\mathbf p|\le1/2$, then there are
signs $\varepsilon_{\mathbf i}\in\{-1,1\}$, $\mathbf i\in E$, such that
the $m$-linear form
\[
 T_E(x^{(1)},\ldots,x^{(m)})
 :=\sum_{\mathbf i\in E}\varepsilon_{\mathbf i}
 \prod_{j=1}^m x^{(j)}_{i_j}
\]
satisfies
\begin{equation}\label{eq:signed-HL-block}
 \|T_E\|
 \le C_{m,\mathbf p,C_0,C_1}
 N^{(d+1)/2-|1/\mathbf p|}
\end{equation}
over both $\mathbb C$ and $\mathbb{R}$.
\end{lemma}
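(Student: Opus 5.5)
A single choice of random signs should give the bound $N^{(d+1)/2-|1/\mathbf p|}$, which is the square root of the trivial count times an extra factor $N^{1/2}$. I would take Rademacher signs $(\varepsilon_{\mathbf i})_{\mathbf i\in E}$ and bound $\E\|T_E\|$. The standard tool is a Bohnenblust--Hille/Kahane type estimate for random multilinear forms, in the form of a sup over the unit balls of a Rademacher process. Concretely, I would first reduce to real scalars; Proposition~\ref{prop:HL-scalar-invariance}-style decomposition, or simply the complexification argument of Lemma~\ref{lem:complexification-HL}, costs only a factor $2^m$. I would also reduce to the case where every $p_j<\infty$, since $c_0$ and $\ell_\infty^N$ agree on $\mathbb K^N$. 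The process $X(x^{(1)},\ldots,x^{(m)})=\sum_{\mathbf i\in E}\varepsilon_{\mathbf i}\prod_j x^{(j)}_{i_j}$ is subgaussian, with variance proxy
\[
\sigma^2=\sup_{x^{(j)}\in B_{\ell_{p_j}^N}}\sum_{\mathbf i\in E}\prod_{j}|x^{(j)}_{i_j}|^2\le C N^{d-2|1/\mathbf p|}
\]
by Corollary~\ref{cor:HL-quadratic}; this is exactly where the fiber hypothesis on $\Delta(E)$ enters.

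For the supremum I would follow the net argument of Proposition~\ref{mult:prop:sign-lemma}. By multilinearity and telescoping, for $\delta=(4m)^{-1}$ the norm is at most twice the maximum over products of $\delta$-nets of $B_{\ell_{p_j}^N}$, each of cardinality at most $(3/\delta)^N$. Hoeffding then gives, for each net point, $\Pp\{|X|>t\}\le 2\exp(-t^2/(2\sigma^2))$. A union bound over at most $C_m^{mN}$ points shows that with positive probability
\[
\|T_E\|\le C\sqrt{N}\,\sigma\le C N^{1/2}N^{d/2-|1/\mathbf p|}=CN^{(d+1)/2-|1/\mathbf p|},
\]
which is \eqref{eq:signed-HL-block}. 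The same signs then work over $\mathbb C$, by the decomposition $z=x+iy$ at cost $2^m$.

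The main obstacle is checking that the subgaussian proxy is uniform over the net. Each net point lies in (a dilate of) the unit balls, so Corollary~\ref{cor:HL-quadratic} applies to it directly, and the entropy $mN\log C_m$ is exactly what produces the factor $N^{1/2}$. The telescoping step also needs care: the net approximation must be in the $\ell_{p_j}$ norm, not in the sup norm as before. Multilinearity still gives $\|T\|\le \max_{\text{net}}|T|+m\delta\|T\|$, so I expect this step to go through without change.
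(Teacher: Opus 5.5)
Your argument is correct and is essentially the paper's proof. The paper takes the variance proxy $N^{d-2|1/\mathbf p|}$ from Corollary~\ref{cor:HL-quadratic}, as you do, and obtains the extra factor $N^{1/2}$ by citing Bayart's Proposition~4.1 with $s=2$; your $\delta$-net, Hoeffding and union-bound argument is exactly that estimate carried out by hand. The remaining differences are cosmetic: the paper works over $\mathbb C$ and restricts to real vectors, whereas you work over $\mathbb R$ and complexify at cost $2^m$, which is fine because the signs are real. Your preliminary reduction to $p_j<\infty$ is unnecessary, since the net argument and Corollary~\ref{cor:HL-quadratic} already cover $p_j=\infty$.
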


\begin{proof}
We first work over $\mathbb C$. On
\[
 \Omega_E:=\{-1,1\}^{E}
\]
equip the coordinate signs $(\varepsilon_{\mathbf i})_{\mathbf i\in E}$
with their product probability measure.  In the notation of
\cite[Proposition~4.1]{BayartJEMS}, take
\[
 X_j=\ell_{p_j}^N(\mathbb C),\qquad
 x_{\mathbf i}^{*(j)}=e_{i_j}^*,\qquad
 s=2,\qquad \beta=\frac12.
\]
Here $e_i^*$ denotes the $i$th coordinate functional.
Equivalently, one may extend the random family to $[N]^m$ by setting it
equal to zero outside $E$.  For fixed
$x=(x^{(1)},\ldots,x^{(m)})$ in the product of the unit balls, set
\[
 Y_x(\varepsilon)
 :=\sum_{\mathbf i\in E}\varepsilon_{\mathbf i}
 \prod_{j=1}^m x^{(j)}_{i_j}.
\]
By \cite[Proposition~4.1]{BayartJEMS}, on an event of probability at least
$1/2$,
\begin{equation}\label{eq:Bayart-random-intermediate}
 \sup_x|Y_x(\varepsilon)|
 \le C_mN^{1/2}\sup_x\|Y_x\|_{\psi_2},
 \qquad \psi_2(t)=e^{t^2}-1.
\end{equation}
The factor $N^{1/2}$ is the factor $N^{1/s}$ in that proposition with
$s=2$.

The standard subgaussian estimate for a Rademacher sum gives, for scalar
coefficients $(a_{\mathbf i})_{\mathbf i\in E}$,
\[
 \left\|\sum_{\mathbf i\in E}
 \varepsilon_{\mathbf i}a_{\mathbf i}\right\|_{\psi_2}
 \le C\left(\sum_{\mathbf i\in E}|a_{\mathbf i}|^2\right)^{1/2}.
\]
For complex coefficients this follows by applying the real estimate to
the real and imaginary parts; only the absolute constant changes.  With
$a_{\mathbf i}=\prod_jx^{(j)}_{i_j}$, Corollary~\ref{cor:HL-quadratic}
therefore implies
\[
 \sup_x\|Y_x\|_{\psi_2}
 \le C_{m,\mathbf p,C_0,C_1}
 N^{d/2-|1/\mathbf p|}.
\]
Combining this estimate with \eqref{eq:Bayart-random-intermediate}, we
obtain, with positive probability,
\[
 \sup_x|Y_x(\varepsilon)|
 \le C_{m,\mathbf p,C_0,C_1}
 N^{(d+1)/2-|1/\mathbf p|}.
\]
Hence at least one deterministic choice of signs satisfies
\eqref{eq:signed-HL-block}.  The same coefficients define a real form,
and restriction from the complex unit balls to the real unit balls can
only decrease its norm.
\end{proof}
}

\section{Proof of Theorem~C}\label{sec:proof-theorem-C}

\begin{proof}[Proof of Theorem~\ref{thm:C}]
\proofstep{1}{Construction and exact dimension for $1<d<m$}
Assume first $1<d<m$. Choose $N_\nu\uparrow\infty$ and blocks
$E_\nu\subset[N_\nu]^m$ from Lemma~\ref{lem:sparse-random-blocks-general}. For
each coordinate $j$, reindex the copy of $[N_\nu]$ used in the $\nu$th block onto
a finite set $I_{j,\nu}\subset\mathbb{N}$, with the sets $I_{j,\nu}$ pairwise disjoint in $\nu$ for each fixed
coordinate $j$. {For instance, these sets may be chosen as
successive intervals of lengths $N_\nu$.} Let $\widetilde E_\nu$ be the reindexed copy and put
\[
 \Lambda_0:=\bigcup_{\nu=1}^\infty\widetilde E_\nu,
 \qquad
 {\mathrm{Diag}:=\{(n,\ldots,n):n\in\mathbb{N}\},\qquad
 \Lambda:=\Lambda_0\cup\mathrm{Diag}.}
\]
{For each $\nu$, take $A_j=I_{j,\nu}$ and $n=N_\nu$ in the
definition of the counting function.  Then
\[
 \Lambda(N_\nu)\ge |\widetilde E_\nu|=|E_\nu|
 \ge \frac12N_\nu^d.
\]
Consequently,
\[
 \limsup_{\nu\to\infty}
 \frac{\log\Lambda(N_\nu)}{\log N_\nu}\ge d,
\]
and hence $\dim(\Lambda)\ge d$.} Conversely, for
$A_j\subset\mathbb{N}$ with $|A_j|\le n$, set
$n_\nu=\max_j|A_j\cap I_{j,\nu}|$. For each $j$, the coordinate sets $I_{j,\nu}$ are disjoint in $\nu$;
and therefore
\[
 \sum_\nu n_\nu\le \sum_\nu\sum_{j=1}^m |A_j\cap I_{j,\nu}|
 =\sum_{j=1}^m\sum_\nu |A_j\cap I_{j,\nu}|\le mn.
\] Therefore
\eqref{eq:random-block-dimension-general} gives
\[
 |\Lambda_0\cap(A_1\times\cdots\times A_m)|
 \le K_0\sum_\nu n_\nu^d
 {\le K_0\left(\sum_\nu n_\nu\right)^d}
 \le K_0m^dn^d.
\]
{Here only finitely many $n_\nu$ are nonzero because the
sets $A_j$ are finite, and the middle inequality follows from $d\ge1$.}
{The estimate obtained from \eqref{eq:random-block-dimension-general} is an estimate for $\Lambda_0$. Since
${|\mathrm{Diag}\cap(A_1\times\cdots\times A_m)|}\le n$, we obtain
\[
 \Lambda(n)\le K_0m^dn^d+n\le (K_0m^d+1)n^d.
\]
Together with the lower bound \eqref{eq:random-block-size-general}, this gives $\dim(\Lambda)=d$ and shows that the dimension is exact in the sense of Definition~\ref{def:comb-dim}.}

\proofstep{2}{Random signs and the block norm}
{Fix $\mathbf p$ with $|1/\mathbf p|\le1/2$.  Since
\eqref{eq:random-block-size-general} and
\eqref{eq:random-block-fibers} give
\[
 |E_\nu|\le2N_\nu^d,
 \qquad
 \Delta(E_\nu)\le K_0N_\nu^{d-1},
\]
Lemma~\ref{lem:signed-HL-block}, with $C_0=2$ and $C_1=K_0$,
provides signs}
$\varepsilon_{\mathbf i}\in\{-1,1\}$ for which
\[
 T_\nu:\ell_{p_1}^{N_\nu}\times\cdots\times\ell_{p_m}^{N_\nu}
 \longrightarrow\mathbb K,
 \qquad
 T_\nu(x^{(1)},\ldots,x^{(m)})
 :=\sum_{\mathbf i\in E_\nu}\varepsilon_{\mathbf i}
 \prod_{j=1}^m x^{(j)}_{i_j}
\]
satisfies
\begin{equation}\label{eq:HL-random-sign-norm}
 \|T_\nu\|\le C_{m,\mathbf p}
 N_\nu^{(d+1)/2-|1/\mathbf p|}.
\end{equation}
{For each coordinate, the restriction map
\[
 R_{j,\nu}:Z_{p_j}(\mathbb K)\longrightarrow
 \ell_{p_j}(I_{j,\nu})
\]
has norm one, and extension by zero from
$\ell_{p_j}(I_{j,\nu})$ into $Z_{p_j}(\mathbb K)$ is isometric.  Thus
reindexing and extending the form do not change its norm.  We may therefore
regard $T_\nu$ as an $m$-linear form on}
$Z_{p_1}(\mathbb K)\times\cdots\times Z_{p_m}(\mathbb K)$ supported on the $\nu$th block of
$\Lambda$. {The support $\Lambda$ depends only on $m$ and $d$ and is fixed independently
of $\mathbf p$. Once $\mathbf p$ is fixed, the testing signs may be selected
separately on each block.}

\proofstep{3}{Extraction of the sharp exponent}
Let $\sigma\ge1$ be admissible for $(\Lambda,\mathbf p)$, and denote by
$C_\sigma$ a constant for which the admissibility estimate
\eqref{eq:HL-admissible-def} holds. Since the nonzero coefficients of
$T_\nu$ have modulus one, Definition~\ref{def:HL-exponent}, the lower size
bound, and \eqref{eq:HL-random-sign-norm} give
\[
 {2^{-1/\sigma}N_\nu^{d/\sigma}
 \le C_{\sigma}C_{m,\mathbf p}
 N_\nu^{(d+1)/2-|1/\mathbf p|}.}
\]
{Equivalently,
\[
 \left(\frac d\sigma-\frac{d+1}{2}
 +\left|\frac1{\mathbf p}\right|\right)\log N_\nu
 \le \log(C_\sigma C_{m,\mathbf p})+\frac{\log2}{\sigma}.
\]
Dividing by $\log N_\nu$ and letting $\nu\to\infty$ yields}
\[
 \frac1{\sigma}\le\frac{d+1}{2d}-\frac1d\left|\frac1{\mathbf p}\right|.
\]
{Since this holds for every admissible $\sigma$, taking the
infimum over all such exponents gives
\[
 \mathrm{HL}_{\mathbb K}(\Lambda;\mathbf p)
 \ge
 \left(
 \frac{d+1}{2d}-\frac1d\left|\frac1{\mathbf p}\right|
 \right)^{-1}.
\]
Equivalently,}
\begin{equation}\label{eq:HL-upper-reciprocal-unified}
 \mathrm{HL}_{\mathbb K}(\Lambda;\mathbf p)^{-1}
 \le\frac{d+1}{2d}-\frac1d\left|\frac1{\mathbf p}\right|.
\end{equation}
For $\mathbb K=\mathbb C$, the reverse inequality is \eqref{eq:Bayart-HL-lower-unified} when $|1/\mathbf p|<1/2$, and follows from \cite[Theorem~1.1(b)]{BayartJEMS} on the boundary $|1/\mathbf p|=1/2$. For $\mathbb K=\mathbb{R}$, Proposition~\ref{prop:HL-scalar-invariance} transfers the same critical exponent from the complex to the real scalar field. Hence equality holds over both scalar fields for $1<d<m$.

{
\proofstep{4}{The range $1/2<|1/\mathbf p|<1$}
Assume $1<d<m$ and fix $\mathbf p$ with
$1/2<|1/\mathbf p|<1$. Put
\[
 r:=\frac1{1-|1/\mathbf p|}.
\]
By \cite[Theorem~1.1(b)]{BayartJEMS}, $r$ is admissible over $\mathbb C$ for every infinite support, and Proposition~\ref{prop:HL-scalar-invariance} transfers the same critical exponent to $\mathbb{R}$. Hence
\[
 \mathrm{HL}_{\mathbb K}(\Lambda;\mathbf p)^{-1}
 \ge 1-\left|\frac1{\mathbf p}\right|.
\]
For the reverse inequality, let $a=(a_n)$ be finitely supported and consider the diagonal form
\[
 T_a(x^{(1)},\ldots,x^{(m)})
 :=\sum_n a_n x_n^{(1)}\cdots x_n^{(m)}.
\]
{Since
\[
 \frac1r+\sum_{j=1}^m\frac1{p_j}=1,
\]
generalized H\"older's inequality gives
\begin{align*}
 |T_a(x^{(1)},\ldots,x^{(m)})|
 &\le \sum_n|a_n|\prod_{j=1}^m|x_n^{(j)}|\\
 &\le \|a\|_r\prod_{j=1}^m\|x^{(j)}\|_{p_j}.
\end{align*}
Thus $\|T_a\|\le\|a\|_r$.} To obtain equality, let
$r'=1/|1/\mathbf p|$ be the conjugate exponent of $r$, and choose a finitely
supported $b\in\ell_{r'}$ with $\|b\|_{r'}=1$ and
$\sum_n a_nb_n=\|a\|_r$. {Such a vector exists by the
finite-dimensional duality between $\ell_r$ and $\ell_{r'}$; in the
complex case its phases are chosen so that the displayed pairing is real
and nonnegative.} Since $|1/\mathbf p|>0$, choose an index $j_0$ with $p_{j_0}<\infty$. For $p_j<\infty$ and $j\ne j_0$, set
\[
 x_n^{(j)}:=|b_n|^{r'/p_j},
\]
and define
\[
 x_n^{(j_0)}:=
 \begin{cases}
 \dfrac{b_n}{|b_n|}|b_n|^{r'/p_{j_0}},& b_n\ne0,\\
 0,& b_n=0.
 \end{cases}
\]
For $p_j=\infty$, take $x_n^{(j)}=1$ on $\operatorname{supp}b$ and $0$ elsewhere.
{If $p_j<\infty$, then
\[
 \|x^{(j)}\|_{p_j}^{p_j}
 =\sum_n|b_n|^{r'}=1,
\]
whereas, if $p_j=\infty$, the vector just defined is finitely supported
and has supremum norm one.  Hence {$x^{(j)}\in B_{Z_{p_j}(\mathbb K)}$} for every
$j$.} Moreover, since
$r'\sum_j1/p_j=1$,
\[
 \prod_{j=1}^m x_n^{(j)}=b_n.
\]
Consequently $\|T_a\|\ge\|a\|_r$, and hence
\[
 \|T_a\|=\|a\|_r.
\]
The construction is finitely supported, so the coordinates corresponding to
$p_j=\infty$ belong to $c_0$.
If $\sigma$ is admissible for $(\Lambda,\mathbf p)$, then $\mathrm{Diag}\subset\Lambda$ yields
\[
 \|a\|_{\ell_\sigma}
 \le C_\sigma\|T_a\|
 =C_\sigma\|a\|_r.
\]
Taking $a_1=\cdots=a_N=1$ and the remaining coefficients zero gives
$N^{1/\sigma}\le C_\sigma N^{1/r}$. Letting $N\to\infty$ shows
\[
 \frac1\sigma\le\frac1r
 =1-\left|\frac1{\mathbf p}\right|.
\]
Therefore
\[
 \mathrm{HL}_{\mathbb K}(\Lambda;\mathbf p)^{-1}
 =1-\left|\frac1{\mathbf p}\right|.
\]
}

\proofstep{5}{The endpoint dimensions $d=1$ and $d=m$}
For $d=1$, take
\[
 E_N:=\{(r,\ldots,r):1\le r\le N\},
\]
and for $d=m$ take $E_N:=[N]^m$. Choose a sequence
$N_\nu\uparrow\infty$. For every coordinate $j$ and every $\nu$, reindex the
$j$-th copy of $[N_\nu]$ onto a finite set $I_{j,\nu}\subset\mathbb{N}$, with
the sets $I_{j,\nu}$ pairwise disjoint as $\nu$ varies. Let
$\widetilde E_\nu$ denote the corresponding reindexed copy of $E_{N_\nu}$ and
set
\[
 \Lambda_0:=\bigcup_{\nu\ge1}\widetilde E_\nu,
 \qquad {\mathrm{Diag}:=\{(n,\ldots,n):n\in\mathbb{N}\},\qquad
 \Lambda:=\Lambda_0\cup\mathrm{Diag}.}
\]
In both cases
\[
 |E_N|=N^d,\qquad \Delta(E_N)\le N^{d-1}.
\]
{Indeed, for $d=1$ every one-coordinate fiber of $E_N$
contains at most one point, while for $d=m$ every such fiber contains
exactly $N^{m-1}$ points.  Moreover, if $A_j\subset[N]$ and
$|A_j|\le n$, then in the diagonal case
\[
 |E_N\cap(A_1\times\cdots\times A_m)|
 \le \min_j|A_j|\le n,
\]
whereas in the full-product case
\[
 |E_N\cap(A_1\times\cdots\times A_m)|
 =\prod_{j=1}^m|A_j|\le n^m.
\]
Thus, in either case,}
\[
 |E_N\cap(A_1\times\cdots\times A_m)|\le n^d.
\]
{Taking $A_j=I_{j,\nu}$ and $n=N_\nu$ gives
\[
 \Lambda(N_\nu)\ge|\widetilde E_\nu|=N_\nu^d,
\]
so the block-size lower bound yields $\dim(\Lambda)\ge d$.} If
$A_j\subset\mathbb{N}$ and $|A_j|\le n$, put
$n_\nu:=\max_j|A_j\cap I_{j,\nu}|$. For each fixed $j$, the sets $I_{j,\nu}$ are disjoint in $\nu$, and hence
\[
 \sum_\nu n_\nu\le \sum_\nu\sum_{j=1}^m |A_j\cap I_{j,\nu}|
 =\sum_{j=1}^m\sum_\nu |A_j\cap I_{j,\nu}|\le mn.
\]
{Using the elementary block rectangle estimate established
above for each $E_N$, rather than Lemma~\ref{lem:sparse-random-blocks-general},
whose hypotheses require $1<d<m$, we obtain}
\[
 \Lambda_0(n)\le \sum_\nu n_\nu^d
 \le \left(\sum_\nu n_\nu\right)^d
 \le (mn)^d.
\]
{The estimate $\Lambda_0(n)\le(mn)^d$ just proved concerns $\Lambda_0$ only. Since the diagonal contributes at most $n$ points to any such box and $d\ge1$,
\[
 \Lambda(n)\le (mn)^d+n\le(m^d+1)n^d.
\]
Thus $\Lambda$ has exact combinatorial dimension $d$.}

{Fix $\mathbf p$ with $|1/\mathbf p|\le1/2$.  Since
$|E_N|=N^d$ and $\Delta(E_N)\le N^{d-1}$,
Lemma~\ref{lem:signed-HL-block}, now with $C_0=C_1=1$, provides real
signs on the block such that the resulting $m$-linear form $T_N$
satisfies, over both scalar fields,}
\[
 \|T_N\|\le C_{m,\mathbf p}
 N^{(d+1)/2-|1/\mathbf p|}.
\]
{The support $E_N$ is fixed; for each $\mathbf p$ the signs
may be chosen separately.} If $\sigma$ is admissible for $(\Lambda,\mathbf p)$ over $\mathbb K$, then the
$N^d$ coefficients of modulus one on the block give
\[
 N^{d/\sigma}
 \le C_\sigma C_{m,\mathbf p}
 N^{(d+1)/2-|1/\mathbf p|}.
\]
Letting $N\to\infty$ yields \eqref{eq:HL-upper-reciprocal-unified}. {Over $\mathbb C$, the reverse inequality follows from \cite[Theorem~1.1(a)]{BayartJEMS} when $|1/\mathbf p|<1/2$ and from \cite[Theorem~1.1(b)]{BayartJEMS} when $|1/\mathbf p|=1/2$;} over $\mathbb{R}$, Proposition~\ref{prop:HL-scalar-invariance} transfers the same critical exponent from the complex to the real scalar field. Thus the first line of \eqref{eq:main-HL-unified} also holds for $d=1$ and $d=m$ over both scalar fields.

{Finally, if $1/2<|1/\mathbf p|<1$, the diagonal $\mathrm{Diag}\subset\Lambda$ and the argument of Step~4 give
\[
 \mathrm{HL}_{\mathbb K}(\Lambda;\mathbf p)^{-1}
 =1-\left|\frac1{\mathbf p}\right|.
\]
This proves the second line of \eqref{eq:main-HL-unified} and completes the proof.}
\end{proof}

\section*{Acknowledgements}

E. Teixeira gratefully acknowledges the support of the Grayce B. Kerr Chair at Oklahoma State University.

\section*{Funding}

D. Pellegrino was partially supported by CNPq through Grants 406457/2023-9, 305807/2025-0, and 403964/2024-5. A. Raposo was partially supported by CNPq through Grants 406457/2023-9, 403964/2024-5, and 302341/2025-0.

\section*{Competing interests}

The authors declare that they have no competing interests.

\section*{Declaration of generative AI and AI-assisted technologies}

During the preparation of this manuscript, the authors used OpenAI's ChatGPT as a generative-AI tool for exploratory calculations, consistency checks, organization of arguments, and drafting and editorial assistance. All AI-assisted material retained in the manuscript was independently reviewed and verified by the authors. The manuscript was written and edited by the authors, who take full responsibility for the content of the work.

\end{document}